\DeclareFontFamily{U}{mathx}{}
\DeclareFontShape{U}{mathx}{m}{n}{<-> mathx10}{}
\DeclareSymbolFont{mathx}{U}{mathx}{m}{n}
\DeclareMathAccent{\widecheck}{0}{mathx}{"71}
\definecolor{LightBlue}{rgb}{0,0.8,1} 
\definecolor{Green}{rgb}{0,0.863,0}
\definecolor{DarkGreen}{rgb}{0,0.5,0}
\definecolor{MildGreen}{RGB}{0,200,0}
\definecolor{MildRed}{RGB}{200,0,0}
\definecolor{Turquoise}{RGB}{0,225,190}
\definecolor{Periwinkle}{RGB}{190,190,255}
\definecolor{NormalGreen}{rgb}{0,0.8,0}
\definecolor{LightGreen}{rgb}{0,0.922,0}
\definecolor{Magenta}{rgb}{0.784,0,0.784}
\definecolor{Yellow}{rgb}{0.95,0.95,0}
\definecolor{lavender}{rgb}{0.4,0,1}
\definecolor{peach}{rgb}{1,0.43,0.39} 
\definecolor{DarkPink}{rgb}{1,0,0.45} 
\definecolor{NewBlue}{RGB}{0,200,255} 
\definecolor{Teal}{rgb}{0,0.784,0.784}
\definecolor{Gold}{rgb}{0.929,0.784,0.392}
\definecolor{Purple}{RGB}{180,0,255}
\definecolor{Orange}{RGB}{255,180,100}
\crefname{conjecture}{Conjecture}{Conjectures}
\newtheorem{theorem}{Theorem}[section]
\newtheorem{proposition}[theorem]{Proposition}
\newtheorem{corollary}[theorem]{Corollary}
\newtheorem{lemma}[theorem]{Lemma}
\theoremstyle{definition}
\newtheorem{definition}[theorem]{Definition}
\newtheorem{remark}[theorem]{Remark}
\newtheorem{example}[theorem]{Example} 
\newcommand{\includeSymbol}[1]{\ensuremath{%
	\mathchoice
		{\raisebox{-.4mm}{\includegraphics[height=2.1ex]{#1}}}	
		{\raisebox{-.4mm}{\includegraphics[height=2.1ex]{#1}}}
		{\raisebox{-.3mm}{\includegraphics[height=1.6ex]{#1}}}
		{\raisebox{-.2mm}{\includegraphics[height=1ex]{#1}}}
}} 
\newcommand{\ULT}{\includeSymbol{UpperLeftTriangle}\!} 
\newcommand{\LRT}{\includeSymbol{LowerRightTriangle}} 
\newcommand{\Bump}{\includeSymbol{Elbows}}
\newcommand{\Cross}{\includeSymbol{Cross}}
\newcommand{\HalfBump}{\includeSymbol{HalfBump}}
\newcommand{\Tri}{\includeSymbol{Triforce}}
\newcommand{\SSS}{\mathfrak{S}} 
\newcommand{\PD}{\mathrm{PD}} 
\newcommand{\LT}{\mathrm{LT}} \newcommand{\ID}{\mathrm{ID}} 
\newcommand{\leqC}{\leq_{\mathrm{chute}}}
\newcommand{\leqL}{\leq} 
\newcommand{\Le}{\reflectbox{$\mathrm{L}$}} 
\newcommand{\T}{\mathrm{T}}
\newcommand{\IT}{\mathrm{IT}}
\newcommand{\row}{\mathsf{row}}
\newcommand{\col}{\mathsf{col}}
\newcommand{\CIT}{\mathrm{CIT}} 
\newcommand{\C}{\mathsf{chute}} 
\newcommand{\hook}{\mathsf{hook}} 
\newcommand{\dd}{\ast} 
\newcommand{\xx}{x_0}
\newcommand{\yy}{y_0} 
\newcommand{\BB}{\mathsf{B}} 
\newcommand{\NW}{\mathsf{NW}}
\newcommand{\NE}{\mathsf{NE}}
\newcommand{\SW}{\mathsf{SW}}
\newcommand{\SE}{\mathsf{SE}}
\newcommand{\RR}{\mathbf{R}} 
\newcommand{\RRR}{\mathfrak{R}}
\newcommand{\QQQ}{\mathfrak{Q}}
\definecolor{goodgreen}{rgb}{0.01, 0.75, 0.24}
\newcommand{\dfn}[1]{\textcolor{blue}{\emph{#1}}}
\begin{document}

\title[]{Chute Move Posets are Lattices} 
\subjclass[2010]{}

\author[]{Ilani Axelrod-Freed}
\address[]{Department of Mathematics, Massachusetts Institute of Technology, Cambridge, MA 02139, USA}
\email{ilani\_af@mit.edu} 

\author[]{Colin Defant}
\address[]{Department of Mathematics, Harvard University, Cambridge, MA 02138, USA}
\email{colindefant@gmail.com} 

\author[]{Hanna Mularczyk} 
\address[]{Department of Mathematics, Massachusetts Institute of Technology, Cambridge, MA 02139, USA}
\email{hannamul@mit.edu} 

\author[]{Son Nguyen} 
\address[]{Department of Mathematics, Massachusetts Institute of Technology, Cambridge, MA 02139, USA}
\email{sonnvt@mit.edu} 

\author[]{Katherine Tung} 
\address[]{Department of Mathematics, Harvard University, Cambridge, MA 02138, USA}
\email{katherinetung@college.harvard.edu}

\begin{abstract}
For each permutation $w$, we consider the set $\mathrm{PD}(w)$ of reduced pipe dreams for $w$, partially ordered so that cover relations correspond to (generalized) chute moves. Settling a conjecture of Rubey from 2012, we prove that $\mathrm{PD}(w)$ is a lattice. To establish this result, we provide a global description of the partial order on $\mathrm{PD}(w)$ by showing that $\mathrm{PD}(w)$ is isomorphic to a poset consisting of objects called \emph{Lehmer tableaux}. In addition, we prove that $\mathrm{PD}(w)$ is a semidistributive polygonal lattice whose polygons are all diamonds or pentagons. 
\end{abstract} 

\maketitle

\section{Introduction}\label{sec:intro} 
Consider the $n$-th staircase Young diagram $\ULT_n$, which is the Young diagram of the partition $(n,n-1,n-2,\ldots,1)$ drawn using English conventions. We index the rows of $\ULT_n$ as $1,\ldots,n$ from top to bottom. A \dfn{pipe dream} is a filling of the boxes of $\ULT_n$ in which each box along the southeast boundary is filled with an \dfn{elbow tile} $\HalfBump$ and every other box is filled with either a \dfn{cross tile} $\Cross$ or a \dfn{bump tile} $\Bump$. This produces an arrangement of $n$ \dfn{pipes} that travel from the west side of $\ULT_n$ to the north side of $\ULT_n$ (see \cref{fig:chute_move}). We label each pipe with an element of the set $[n]=\{1,\ldots,n\}$ according to the row where it starts on the west side of $\ULT_n$. Let $\SSS_n$ denote the symmetric group of permutations of $[n]$. Given a pipe dream $P$, we can read the labels of the pipes along the north side of $\ULT_n$ from left to right to obtain a permutation $w_P\in\SSS_n$ in one-line notation; we say $P$ is a pipe dream for $w_P$. A pipe dream is \dfn{reduced} if no two pipes cross each other more than once. For $w\in\SSS_n$, let $\PD(w)$ denote the set of reduced pipe dreams for $w$. Reduced pipe dreams are fundamental objects in Schubert calculus because they provide combinatorial interpretations of Schubert polynomials \cite{BergeronBilley,FominKirillov,FK,
HPW,KnutsonMiller1,KnutsonMiller2}; they are essentially equivalent to \emph{rc-graphs}, which were introduced by Billey--Jockusch--Stanley \cite{BJS} following related works of Fomin--Kirillov \cite{FominKirillov,FK} and Fomin--Stanley \cite{FS}.  

\begin{figure}[ht]
  \begin{center}
  \includegraphics[height=3.874cm]{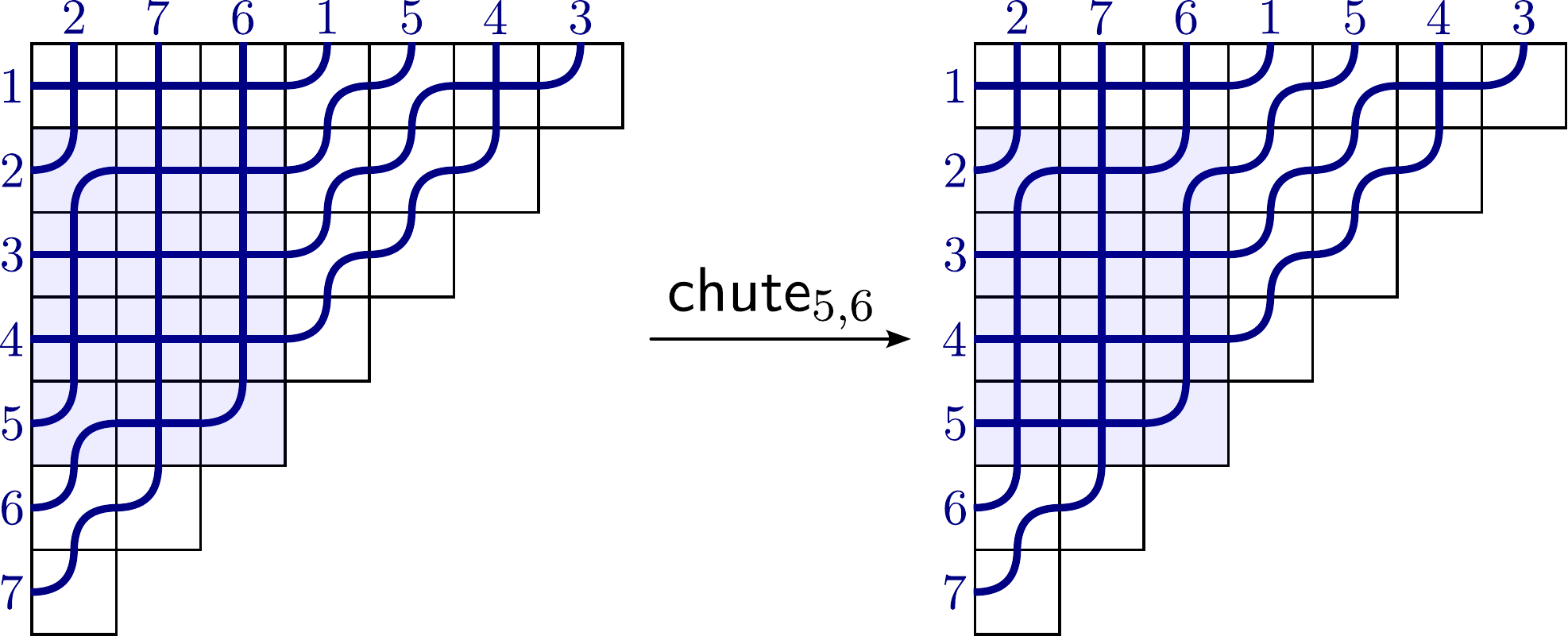}
  \end{center}
\caption{Two reduced pipe dreams in $\PD(2761543)$. The pipe dream on the right is obtained from the one on the left by applying a chute move that takes place in the shaded rectangle. }\label{fig:chute_move} 
\end{figure}

Consider a rectangle $R$ of boxes in $\ULT_n$. We denote by $\NW(R)$, $\NE(R)$, $\SW(R)$, and $\SE(R)$ the boxes in the northwest, northeast, southwest, and southeast corners of $R$, respectively. Now consider a pipe dream $P\in\PD(w)$ such that $\NW(R)$ and $\SW(R)$ are filled with bump tiles, $\SE(R)$ is filled with a bump or elbow tile, and all other boxes in $R$ are all filled with cross tiles. We can apply a \dfn{chute move}\footnote{Chute moves were called \emph{generalized chute moves} in \cite{CPS,Rubey}.} to $P$ by changing the bump tile in $\SW(R)$ to a cross tile and changing the cross tile in $\NE(R)$ to a bump tile (see \cref{fig:chute_move}); this results in a new pipe dream $P'$, which is also in $\PD(w)$. If the cross tile in $P$ in the box $\NE(R)$ involves pipes $i$ and $j$, then we write \[P'=\C_{i,j}(P).\] We say this chute move \dfn{takes place} in the rectangle $R$, and we write $R=\RR(P,P')$. 

\begin{figure}[htbp]
  \begin{center}
  \includegraphics[width=0.78\linewidth]{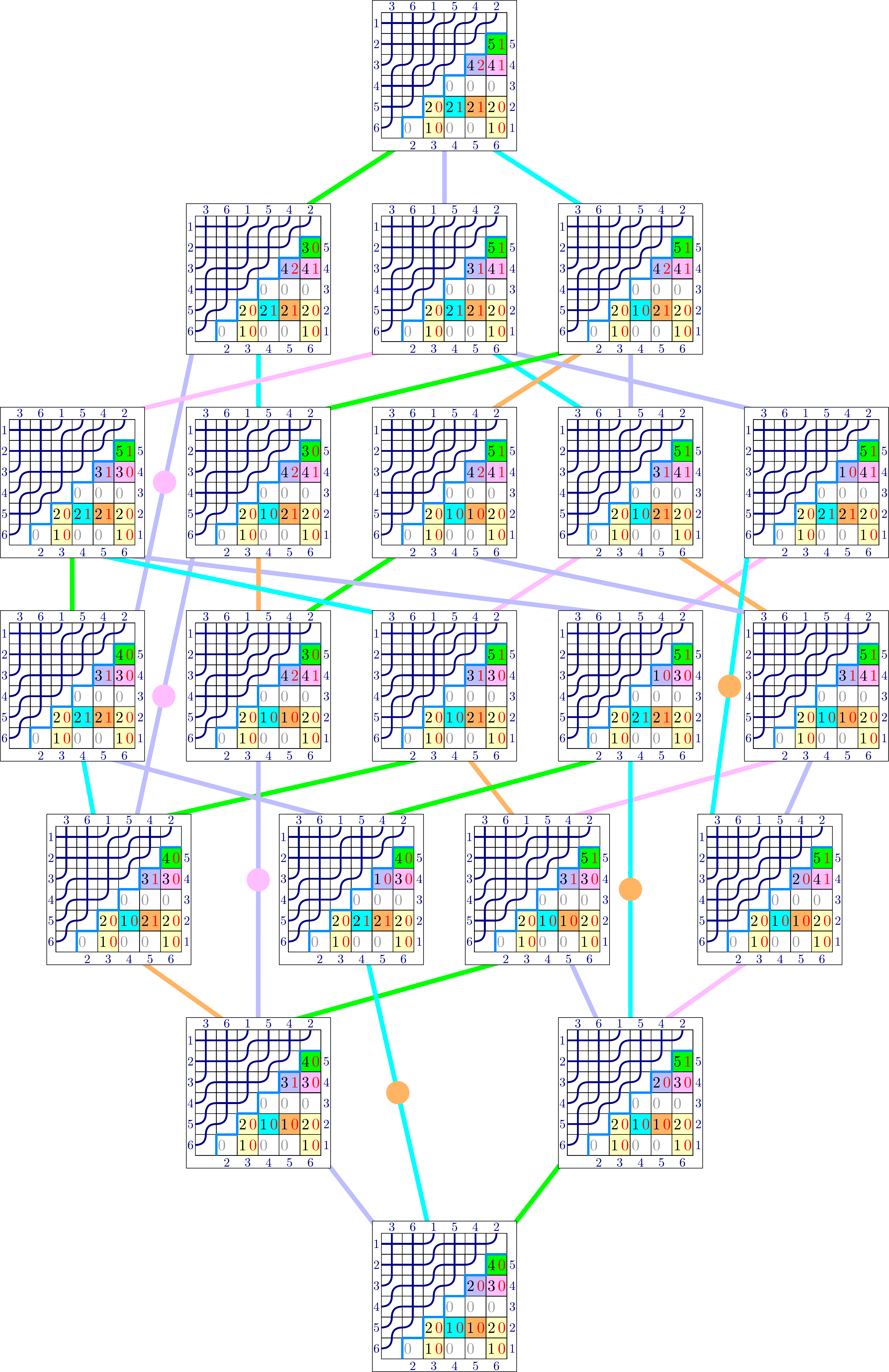}
  \end{center}
\caption{The chute move poset $\PD(361542)$, with each element represented by a pipe dream $P$, the inversions tableau $\Theta(P)$ (with black numbers), and the Lehmer tableau $\Phi(P)$ (with {\color{red}red} numbers). Each edge is colored according to which box corresponds to the pair of pipes involved in the chute move. Discs on edges are colored according to additional boxes that change in the Lehmer tableau. 
}\label{fig:big_lattice} 
\end{figure}

In 2012, building off of work of Bergeron--Billey \cite{BergeronBilley} and Serrano--Stump \cite{SerranoStump}, Rubey \cite{Rubey} defined a partial order $\leqC$ on $\PD(w)$ by declaring that $P\leqC P'$ if one can obtain $P'$ from $P$ by applying a sequence of chute moves. We call the poset $\PD(w)$ equipped with the partial order $\leqC$ the \dfn{chute move poset} of $w$ (see \cref{fig:big_lattice}). Rubey proved that every chute move poset has a minimum element and a maximum element \cite[Theorem~3.8]{Rubey}; his argument was a mild modification of a previous argument due to Bergeron and Billey \cite{BergeronBilley} for a related poset defined using only chute moves that take place in rectangles of height $2$. Rubey also conjectured that $\PD(w)$ is always a lattice \cite[Conjecture~2.8]{Rubey}. In the special case when $w$ begins with $1$ and avoids the pattern $1243$, Ceballos, Padrol, and Sarmiento \cite[Theorem~5.9]{CPS} proved Rubey's conjecture by showing that $\PD(w)$ is isomorphic to a certain $\nu$-Tamari lattice (in fact, every $\nu$-Tamari lattice arises in this way). Our main goal in this article is to fully resolve Rubey's conjecture.\footnote{While writing this article, we were informed that Billey, McCausland, and Minnerath have simultaneously and independently proven Rubey's conjecture using a completely different approach \cite{BMM}. } 

\begin{theorem}\label{thm:main} 
For every permutation $w\in\SSS_n$, the chute move poset $\PD(w)$ is a lattice. 
\end{theorem}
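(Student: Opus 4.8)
\section*{Proof proposal}

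The plan is to replace the purely local, cover-relation description of $\leqC$ by a global one, by embedding $\PD(w)$ in an order-preserving and order-reflecting way into a product of chains indexed by the boxes of $\ULT_n$ (equivalently, by the inversions of $w$). The abstract already names the two gadgets I would build: an \emph{inversions tableau} $\Theta(P)$ that serves as a complete combinatorial invariant of $P$, and a \emph{Lehmer tableau} $\Phi(P)$, obtained from $\Theta(P)$ by a monotone change of coordinates and designed so that the entrywise partial order on Lehmer tableaux reproduces $\leqC$ exactly. Once such a $\Phi$ is in hand, proving the lattice property reduces to understanding the image $\Phi(\PD(w))$ as a subposet of the distributive lattice of all tableaux.

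First I would record, for a reduced pipe dream $P$, where each crossing sits: since $P$ is reduced, the set of pairs of pipes that cross is exactly the inversion set of $w$ and is independent of $P$, so the only data that varies is the \emph{location} of each crossing. Encoding these locations gives $\Theta(P)$, and I would prove that $P \mapsto \Theta(P)$ is injective by showing how to reconstruct every tile of $P$ from the crossing locations. Next I would compute the effect of a single chute move $P' = \C_{i,j}(P)$ on this encoding: the move slides the crossing of pipes $i$ and $j$ from $\NE(\RR(P,P'))$ to $\SW(\RR(P,P'))$, which changes one distinguished entry and, because rerouting that strand can displace nearby crossings, possibly a controlled cascade of further entries (these are the discs decorating the edges in \cref{fig:big_lattice}). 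I would then define $\Phi$ so that each chute move strictly increases exactly the coordinates it should, and prove the central comparison statement: $P \leqC Q$ if and only if $\Phi(P) \le \Phi(Q)$ entrywise.

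With the global order in place, I would characterize the image of $\Phi$ by a list of local inequalities among the entries of a tableau (flag and monotonicity conditions of Lehmer-code type), and then establish the lattice property by showing that the set of Lehmer tableaux is closed under the entrywise minimum: if $\Phi(P)$ and $\Phi(Q)$ are Lehmer tableaux, so is their componentwise meet, and its preimage under $\Phi$ is the meet of $P$ and $Q$ in $\PD(w)$. Since Rubey's theorem guarantees a maximum element, the standard fact that a finite poset with a top element in which all pairwise meets exist is a lattice then yields \cref{thm:main}. Crucially, the defining inequalities should \emph{not} be closed under entrywise maximum; this asymmetry is what forces joins to be computed as the meet of the common upper bounds, and it is the mechanism behind the pentagons and the semidistributivity recorded in the abstract.

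The main obstacle, I expect, is the reverse direction of the comparison statement together with the meet-closure verification. Checking that a chute move increases $\Phi$ is a local computation, but proving that $\Phi(P) \le \Phi(Q)$ \emph{implies} $P \leqC Q$ requires producing an actual increasing sequence of chute moves from $P$ to $Q$---an explicit routing algorithm---and showing that every intermediate tableau stays inside the image of $\Phi$. The same difficulty reappears when verifying meet-closure: one must show that the entrywise minimum of two valid Lehmer tableaux again satisfies all the defining inequalities, which is where the precise (and asymmetric) form of those inequalities, and hence the correct definition of $\Phi$ rather than the raw $\Theta$, really earns its keep.
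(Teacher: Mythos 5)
Your first stage coincides with the paper's: the maps $\Theta$ and $\Phi$ are defined essentially as you describe, and the heart of the argument is indeed the equivalence $P\leqC Q\iff\Phi(P)\leq\Phi(Q)$ (\cref{thm:isomorphism}), whose hard direction is the explicit routing argument you anticipate. The gap is in your final step: the set $\LT(w)$ is \emph{not} closed under componentwise minimum, so you cannot compute meets entrywise and then invoke Rubey's maximum element. To see this concretely, take the pentagon constructed in \cref{lem:non-commutative1}. There $\Phi(P_1)$ is obtained from $L_0=\Phi(P_0)$ by adding $1$ on the boxes of $\mathcal B_{0,1}$, and $\Phi(P_3)$ is obtained from $L_0$ by adding $1$ on the boxes of $\mathcal B_{0,2}$ and of $\mathcal B_{2,3}$, where $\mathcal B_{2,3}\subseteq\mathcal B_{0,1}$ is nonempty (it contains $(i,k)$) and $\mathcal B_{0,2}$ lies in row $j$, hence is disjoint from $\mathcal B_{0,1}\cup\mathcal B_{2,3}$, which lies in row $i$. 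The componentwise minimum of $\Phi(P_1)$ and $\Phi(P_3)$ is therefore $L_0$ plus $1$ on exactly the boxes of $\mathcal B_{2,3}$, which is strictly greater than $L_0$ at $(i,k)$ and strictly smaller than $\Phi(P_1)$ at $(i,j)$. If this tableau were in $\LT(w)$, its preimage under $\Phi$ would be a common lower bound of $P_1$ and $P_3$ lying strictly between $P_0$ and $P_1$, contradicting $P_0\lessdot_{\C}P_1$. So min-closure fails whenever such a pentagon occurs, which it does for general $w$. Your asymmetry picture is also off: closure fails for \emph{both} the entrywise min and the entrywise max, just in different pentagon configurations (max-closure fails in the configuration of \cref{lem:non-commutative2}, min-closure in that of \cref{lem:non-commutative1}), consistent with the fact that $\PD(w)$ is anti-isomorphic to $\PD(w^{-1})$ and is semidistributive on both sides.

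What the paper does instead, and what you would need to supply, is a local verification. By the Bj\"orner--Edelman--Ziegler criterion (\cref{prop:BEZ}), since $\PD(w)$ has a minimum and a maximum element it suffices to produce a least upper bound for every pair of elements covering a common element. This is done by a case analysis on how the two chute-move rectangles interact (\cref{lem:commute,lem:non-commutative1,lem:non-commutative2}): when the rectangles are independent or share a corner in one way, the join is the componentwise maximum of the Lehmer tableaux and can be exhibited directly; in the remaining case ($\SW(R_1)=\SE(R_2)$) the componentwise maximum is not even in $\LT(w)$, and the paper transports this case to the previous one via the triforce embedding. Some such case analysis, or an entirely different description of meets, is unavoidable; the clean ``the image of $\Phi$ is a meet-subsemilattice of a product of chains'' statement your argument rests on is false for this $\Phi$.
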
 

The primary difficulty in proving \cref{thm:main} comes from the fact that the partial order $\leqC$ is defined locally in terms of covering relations. In order to prove the lattice property, we seek a global description of this partial order. We provide such a description in terms of \emph{inversions tableaux} and \emph{Lehmer tableaux} (see \cref{sec:tableaux} for the definitions). Roughly speaking, inversions tableaux are certain tableaux in bijection with reduced pipe dreams; they were introduced recently by Axelrod-Freed in order to give a new combinatorial formula for Schubert polynomials \cite{AF}. Axelrod-Freed also introduced \emph{Lehmer tableaux}, which are in bijection with inversions tableaux. Thus, letting $\LT(w)$ denote the set of Lehmer tableaux corresponding to reduced pipe dreams in $\PD(w)$, we have a bijection ${\Phi\colon\PD(w)\to\LT(w)}$. There is a natural partial order $\leqL$ on $\LT(w)$ defined globally by componentwise comparison of entries. We call $\LT(w)$ equipped with this partial order a \dfn{Lehmer poset}. We will prove that the map $\Phi$ is a poset isomorphism (\cref{thm:isomorphism}). We will then combine the local and global descriptions of the partial order to complete the proof of \cref{thm:main}. Given two Lehmer tableaux $L,L'\in\LT(w)$, one can easily determine if $L\leq L'$. Therefore, given $P,P'\in\PD(w)$, one can use the isomorphism $\Phi$ to efficiently test whether $P\leqC P'$. 

Our second main theorem shows that 
chute move posets have additional structure beyond the lattice property.  

\begin{theorem}\label{thm:semidistributive} 
For every permutation $w\in\SSS_n$, the lattice $\PD(w)$ is semidistributive and polygonal. Moreover, every polygon in $\PD(w)$ is a diamond or a pentagon. 
\end{theorem}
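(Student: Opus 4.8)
The plan is to work entirely inside the Lehmer poset $\LT(w)$, transporting all structure across the isomorphism $\Phi\colon\PD(w)\to\LT(w)$ of \cref{thm:isomorphism} and using the lattice property established in \cref{thm:main}. The organizing tool is an edge labeling of the cover relations: to each cover $P\lessdot \C_{i,j}(P)$ I assign the label $\{i,j\}$ recording the pair of pipes uncrossed by the chute move, which is exactly the edge coloring displayed in \cref{fig:big_lattice}. I would then prove all three assertions at once by showing that this labeling, together with its order dual, is an \emph{SB-labeling} in the sense of Mühle. Concretely, this amounts to checking, for every element $P$ with two distinct upper covers $P_1$ and $P_2$, that the interval $[P,P_1\vee P_2]$ is a polygon whose two initial edges carry distinct labels and all of whose edge labels lie in $\{\lambda(P\lessdot P_1),\lambda(P\lessdot P_2)\}$, and dually for two distinct lower covers. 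Mühle's theorem that a finite lattice admitting an SB-labeling is join-semidistributive then yields join-semidistributivity, its order dual yields meet-semidistributivity, the polygon condition is precisely polygonality, and the diamond-or-pentagon classification reduces to determining the possible shapes of these minimal branching intervals.

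The heart of the argument is therefore a local analysis. Fix $P$ with two applicable chute moves producing $P_1=\C_{i,j}(P)$ and $P_2=\C_{k,l}(P)$, taking place in rectangles $\RR(P,P_1)$ and $\RR(P,P_2)$, and perform a case analysis according to the relative position of these two rectangles inside $\ULT_n$. When the rectangles are disjoint, or more generally when neither move disturbs the tiles defining the other, the two moves commute: applying them in either order reaches a common pipe dream in two steps, so $[P,P_1\vee P_2]$ is a diamond whose four edges are labeled $\{i,j\}$ or $\{k,l\}$, with opposite edges sharing a label. The remaining, interfering configurations are those in which one move alters a tile the other relies on; here I expect that performing one move first enables only a \emph{modified} version of the second, so that the two branches reconverge after an extra step on one side, producing a pentagon. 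Crucially, I would use the Lehmer-tableau coordinates to pin down the join: since $\leqL$ is componentwise comparison of entries, the join $P_1\vee P_2$ is forced by the entries that must change, and the ``extra'' box changing on the long side of the pentagon is exactly the disc-colored box recorded in \cref{fig:big_lattice}. Verifying that no configuration produces an interval with more than two maximal chains, and that every label in the interval stays within the two initial labels, is the main obstacle, since chute moves take place in rectangles of arbitrary height and two such rectangles can overlap in several qualitatively different ways; the Lehmer description is precisely what keeps this casework finite and checkable.

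With the upward analysis in hand, the downward analysis for two lower covers $P_1,P_2\lessdot P$ proceeds symmetrically, interpreting each downward cover as an inverse chute move and classifying $[P_1\wedge P_2,P]$ by the same rectangle-interaction casework; by the symmetry of the chute-move relation this again yields only diamonds and pentagons with labels drawn from the two relevant pairs of pipes. Assembling the pieces, the labeling and its dual are SB-labelings, so $\PD(w)$ is both join- and meet-semidistributive, hence semidistributive; every minimal branching interval is a polygon, so $\PD(w)$ is polygonal; and the local classification shows that every polygon is a diamond or a pentagon. If one prefers to avoid invoking the SB-labeling machinery, the same interval computations directly supply, for each element, a canonical join representation read off from the labels on its descending edges into join-irreducibles, from which join-semidistributivity follows and meet-semidistributivity is obtained dually; this provides a self-contained alternative to the final step.
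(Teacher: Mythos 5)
Your treatment of polygonality and the diamond-or-pentagon classification is essentially the paper's: when the two chute rectangles do not share the critical corners the moves commute and $[P_0,P_1\vee P_2]$ is a diamond (\cref{lem:commute}), and the interfering cases $\SW(R_1)=\NW(R_2)$ and $\SW(R_1)=\SE(R_2)$ each produce a pentagon (\cref{lem:non-commutative1,lem:non-commutative2}, the second reduced to the first by the triforce embedding); combined with the transpose anti-isomorphism this yields \cref{cor:polygonal}. So that half of your plan is sound, modulo actually carrying out the rectangle casework, and the Lehmer coordinates do pin down the join exactly as you predict.

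The semidistributivity argument, however, has a genuine gap --- in fact two. First, the theorem you attribute to M\"uhle runs in the opposite direction: the results in the literature (Hersh--M\'esz\'aros, M\"uhle) say that semidistributive lattices \emph{admit} SB-labelings, and the payoff of possessing an SB-labeling is topological (each open interval is homotopy equivalent to a ball or a sphere), not lattice-theoretic; so even a successful verification of the SB condition would not deliver semidistributivity. Second, the specific labeling by pipe pairs fails the SB condition anyway: in the pentagon of \cref{lem:non-commutative1} the five covers are realized by $\C_{i,j},\C_{j,k},\C_{j,k},\C_{i,k},\C_{i,j}$, so the edge from $P_2$ to $P_3$ carries the third label $\{i,k\}\notin\bigl\{\{i,j\},\{j,k\}\bigr\}$. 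Your fallback via canonical join representations is also not a proof: writing down a candidate joinand set from edge labels is easy, but showing it is a \emph{canonical} join representation is essentially equivalent to join-semidistributivity and does not follow from the local polygon classification (polygonality with only diamonds and pentagons is weaker than semidistributivity). What the paper actually does for this half is reduce meet-semidistributivity to cover relations via \cref{prop:Free} and then show, for each cover $P_\dd\lessdot_{\C} P_{\dd\dd}$, that the order-convex set $\{P:P\wedge P_{\dd\dd}=P_\dd\}$ is closed under joins of two elements covering their common meet (\cref{lem:semi1,lem:semi2}, again via the triforce embedding), so that it has a maximum by \cref{prop:maximum}; the decisive step is a global monotonicity argument in the Lehmer coordinates that tracks the \emph{row} in which each chute move acts, and this information is invisible to a purely local analysis of polygons.
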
 

Semidistributive lattices (defined in \cref{sec:background}) form an important generalization of distributive lattices. There is a representation theorem for semidistributive lattices due to Reading, Speyer, and Thomas \cite{RST} that is analogous to Birkhoff's representation theorem for distributive lattices \cite{Birkhoff}. Each semidistributive lattice also has a canonical join complex and comes equipped with a bijective \emph{rowmotion} operator \cite{Barnard,Reading,ThomasWilliams}. In addition, the order complex of a semidistributive lattice is either contractible or homotopy equivalent to a sphere \cite{McConville}. \cref{thm:semidistributive} tells us that chute move posets enjoy all of these properties. 

It would be interesting to gain an even deeper understanding of the lattice-theoretic properties of chute move posets. For example, one could attempt to describe the join-irreducible elements and the canonical join complex of $\PD(w)$ (as in \cite{Barnard}).

\subsection{Outline} 
\begin{itemize}
\item \cref{sec:background} establishes useful background information on posets and lattices. 
\item \cref{sec:tableaux} defines inversions and Lehmer tableaux and discusses their basic properties. 
\item \cref{sec:lemmas} establishes some technical lemmas about inversions and Lehmer tableaux.
\item In \cref{sec:transformations}, we discuss two useful transformations that one can apply to a pipe dream, and we describe how these transformations interact with the partial orders under consideration. The first transformation is the transpose, which changes a reduced pipe dream for $w$ into a reduced pipe dream for $w^{-1}$. The second transformation is a new map that we call the \emph{triforce embedding}; it changes a reduced pipe dream for a permutation $w\in\SSS_n$ into a reduced pipe dream for a larger permutation $w^{\Tri}\in\SSS_{2n}$. 
\item  \cref{sec:isomorphism} proves that the bijection $\Phi\colon\PD(w)\to\LT(w)$ is an isomorphism. 
\item In \cref{sec:lattice}, we use the isomorphism $\Phi$, along with a useful result about the triforce embedding from \cref{sec:transformations}, to prove \cref{thm:main}. As a corollary of the proof, we show that chute move posets are polygonal lattices whose polygons are all diamonds or pentagons. 
\item In \cref{sec:semidistributivity}, we complete the proof of \cref{thm:semidistributive} by showing that chute move posets are semidistributive. This proof also makes heavy use of the triforce embedding. 
\end{itemize}

\section{Preliminaries}\label{sec:background} 
Let $Q$ be a finite poset with a partial order $\leq$. For $\alpha,\beta\in Q$ with $\alpha\leq \beta$, the \dfn{interval} between $\alpha$ and $\beta$ is the set $[\alpha,\beta]=\{\gamma\in Q:\alpha\leq \gamma\leq \beta\}$. If $\alpha<\beta$ and $[\alpha,\beta]=\{\alpha,\beta\}$, then we say $\beta$ \dfn{covers} $\alpha$ and write $\alpha\lessdot \beta$. A \dfn{maximum} (respectively, \dfn{minimum}) element of $Q$ is an element $\gamma^*\in Q$ such that $\alpha\leq\gamma^*$ (respectively, $\gamma^*\leq\alpha$) for all $\alpha\in Q$.

A \dfn{polygon} in a poset is an interval $[\alpha,\beta]$ of cardinality at least $4$ that is the union of two chains whose intersection is $\{\alpha,\beta\}$. A \dfn{diamond} is a polygon of cardinality $4$. A \dfn{pentagon} is a polygon of cardinality $5$. 

An \dfn{anti-isomorphism} from a poset $Q$ to a poset $Q'$ is a bijection $\varphi\colon Q\to Q'$ such that for all $\alpha,\beta\in Q$, we have $\alpha\leq\beta$ in $Q$ if and only if $\varphi(\beta)\leq\varphi(\alpha)$ in $Q'$. 

A \dfn{lattice} is a poset $Q$ such that any two elements $\alpha,\beta\in Q$ have a greatest lower bound $\alpha\wedge \beta$ and a least upper bound $\alpha\vee \beta$. We call $\alpha\wedge \beta$ and $\alpha\vee \beta$ the \dfn{meet} and \dfn{join} (respectively) of $\alpha$ and $\beta$. A lattice $Q$ is \dfn{polygonal} if for all $\gamma,\gamma'\in Q$ such that $\gamma$ and $\gamma'$ cover $\gamma\wedge \gamma'$ or are covered by $\gamma\vee \gamma'$, the interval $[\gamma\wedge \gamma',\gamma\vee \gamma']$ is a polygon. 

Let $Q$ be a finite lattice. We say $Q$ is \dfn{meet-semidistributive} if for all $\alpha,\beta\in Q$ such that $\alpha\leq\beta$, the set $\{\gamma\in Q:\gamma\wedge\beta=\alpha\}$ has a maximum element. We say $Q$ is \dfn{join-semidistributive} if for all $\alpha,\beta\in Q$ such that $\alpha\leq\beta$, the set $\{\gamma\in Q:\gamma\vee\alpha=\beta\}$ has a minimum element. We say $Q$ is \dfn{semidistributive} if it is both meet-semidistributive and join-semidistributive. 

\begin{proposition}[{\cite[Theorem~2.56]{Free}}]\label{prop:Free}
Let $Q$ be a finite lattice. Then $Q$ is meet-semidistributive if and only if for every cover relation $\alpha\lessdot\beta$ in $Q$, the set $\{\gamma\in Q:\gamma\wedge\beta=\alpha\}$ has a maximum element. 
\end{proposition}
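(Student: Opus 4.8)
The plan is to prove the two implications separately, with essentially all of the work in the ``if'' direction. The ``only if'' direction is immediate: meet-semidistributivity requires $\{\gamma\in Q:\gamma\wedge\beta=\alpha\}$ to have a maximum for \emph{every} pair $\alpha\leq\beta$, and cover relations $\alpha\lessdot\beta$ form a special case. So I would assume henceforth that $M(\alpha,\beta):=\{\gamma\in Q:\gamma\wedge\beta=\alpha\}$ has a maximum for every cover relation $\alpha\lessdot\beta$, fix an arbitrary pair $\alpha\leq\beta$, and produce a maximum for $M(\alpha,\beta)$.

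First I would reformulate the goal in terms of closure under join. Since $Q$ is finite and $\alpha\in M(\alpha,\beta)$, the set $M(\alpha,\beta)$ is nonempty, so it has a maximum if and only if it is closed under the binary join $\vee$ (a finite nonempty join-closed set has a maximum, namely the join of all its elements). Conversely, if $M(\alpha,\beta)$ has a maximum $m$ then it is automatically join-closed: for $\gamma,\gamma'\in M(\alpha,\beta)$ we have $\gamma\vee\gamma'\leq m$, whence $\alpha=\gamma\wedge\beta\leq(\gamma\vee\gamma')\wedge\beta\leq m\wedge\beta=\alpha$. Thus the cover hypothesis says exactly that $M(\alpha,\beta)$ is join-closed for every cover $\alpha\lessdot\beta$, and it suffices to prove that $M(\alpha,\beta)$ is join-closed for every $\alpha\leq\beta$.

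The main step is an induction on the length $\ell(\alpha,\beta)$ of a longest chain in $[\alpha,\beta]$. The cases $\ell(\alpha,\beta)=0$ (where $\alpha=\beta$ and $M(\alpha,\beta)=\{\gamma\in Q:\gamma\geq\beta\}$ is visibly join-closed) and $\ell(\alpha,\beta)=1$ (the cover case, handled by hypothesis) are the base cases. For the inductive step I would take $\gamma,\gamma'\in M(\alpha,\beta)$, set $z=\gamma\vee\gamma'$ and $c=z\wedge\beta$, and aim to show $c=\alpha$. The key observation is that for \emph{every} lower cover $\beta''\lessdot\beta$ with $\alpha\leq\beta''$ one has $\gamma,\gamma'\in M(\alpha,\beta'')$: indeed $\beta''\leq\beta$ forces $\gamma\wedge\beta''\leq\gamma\wedge\beta=\alpha$, while $\alpha\leq\gamma$ and $\alpha\leq\beta''$ give $\alpha\leq\gamma\wedge\beta''$. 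Since $\ell(\alpha,\beta'')<\ell(\alpha,\beta)$, the inductive hypothesis yields $z\in M(\alpha,\beta'')$, that is, $c\wedge\beta''=z\wedge\beta''=\alpha$ for every such $\beta''$.

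The final and most delicate point — which I expect to be the main obstacle — is upgrading ``$c\wedge\beta''=\alpha$ for every admissible lower cover $\beta''$'' to the conclusion $c=\alpha$. I would argue by contradiction, supposing $\alpha<c$. Since $c=z\wedge\beta\leq\beta$, either $c<\beta$ or $c=\beta$. If $c<\beta$, finiteness of $Q$ yields a lower cover $\beta''\lessdot\beta$ with $c\leq\beta''$; then $\alpha\leq c\leq\beta''$, so $\beta''$ is admissible and $\alpha=c\wedge\beta''=c$, a contradiction. If instead $c=\beta$, then any admissible lower cover $\beta''>\alpha$ (one exists because $\ell(\alpha,\beta)\geq2$) gives $\alpha=c\wedge\beta''=\beta''>\alpha$, again a contradiction. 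Hence $c=\alpha$, so $z=\gamma\vee\gamma'\in M(\alpha,\beta)$ and the induction is complete. The subtle feature here is that a single fixed choice of lower cover does \emph{not} suffice; one must establish the meet condition for all admissible lower covers of $\beta$ and only then select one positioned above $c$.
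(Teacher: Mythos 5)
Your proof is correct. Note that the paper does not actually prove this proposition---it is quoted directly from Freese--Je\v{z}ek--Nation \cite[Theorem~2.56]{Free}---so there is no in-paper argument to compare against; your contribution is a self-contained proof. The argument holds up under scrutiny: the ``only if'' direction is indeed immediate from the paper's formulation of meet-semidistributivity (covers are a special case of $\alpha\leq\beta$); the equivalence between ``has a maximum'' and ``closed under binary join'' for a finite nonempty subset of a lattice is correct in both directions; the induction on the length of a longest chain in $[\alpha,\beta]$ is well-founded, since $\ell(\alpha,\beta'')<\ell(\alpha,\beta)$ for any lower cover $\beta''\lessdot\beta$ with $\alpha\leq\beta''$; and the observation that $\gamma\wedge\beta=\alpha$ forces $\gamma\wedge\beta''=\alpha$ for every such $\beta''$ is exactly what makes the inductive hypothesis applicable. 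The final case split is also handled properly: writing $c=(\gamma\vee\gamma')\wedge\beta$, if $c<\beta$ then choosing a lower cover above $c$ gives $c=c\wedge\beta''=\alpha$ directly, and if $c=\beta$ then the existence of an admissible lower cover strictly above $\alpha$ (guaranteed by $\ell(\alpha,\beta)\geq 2$) yields a contradiction. You are right that the delicate point is establishing $z\wedge\beta''=\alpha$ for \emph{all} admissible lower covers before selecting the one positioned above $c$; a version of the argument that fixes a single $\beta''$ in advance would not close. The only stylistic remark is that the contradiction framing in the last step is unnecessary---the two cases resolve directly---but this does not affect correctness.
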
 

A subset $X$ of a poset $Q$ is called \dfn{order-convex} if for all $\alpha,\beta\in X$ such that $\alpha\leq\beta$, the interval $[\alpha,\beta]$ is contained in $X$. 
The next proposition will be useful when applying \cref{prop:Free}. 

\begin{proposition}\label{prop:maximum}
Let $Q$ be a finite lattice. Let $X$ be an order-convex subset of $Q$ with a minimum element. Suppose that for all $\gamma,\gamma'\in X$ satisfying $\gamma\wedge\gamma'\lessdot\gamma$ and $\gamma\wedge\gamma'\lessdot\gamma'$, we have $\gamma\vee\gamma'\in X$. Then $X$ has a maximum element. 
\end{proposition}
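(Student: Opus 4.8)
The plan is to reduce the existence of a maximum to a join-closure statement, and then prove that statement by an induction whose decreasing quantity is engineered with a small trick. First, since $X$ is finite and nonempty (it contains its minimum $m$), it has a maximum as soon as it is closed under the join operation of $Q$ -- the maximum then being $\bigvee_{x\in X}x$. So the goal reduces to proving that $\alpha\vee\beta\in X$ for all $\alpha,\beta\in X$. I will repeatedly use two easy consequences of the hypotheses: because $X$ is order-convex with minimum $m$, we have $[m,\alpha]\subseteq X$ for every $\alpha\in X$ (as $m\le\alpha$); and consequently $\alpha\wedge\beta\in X$ whenever $\alpha,\beta\in X$, since $m\le\alpha\wedge\beta\le\alpha$. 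In other words, $X$ is closed under meets and contains the entire interval from $m$ up to each of its elements.

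I would prove the join-closure by induction on $N(\alpha,\beta)$, the length of a longest chain in the interval $[\alpha\wedge\beta,\alpha\vee\beta]$. If $\alpha$ and $\beta$ are comparable then $\alpha\vee\beta\in\{\alpha,\beta\}\subseteq X$, which disposes of the base cases; so assume $\alpha,\beta$ are incomparable and set $c=\alpha\wedge\beta$, so that $c<\alpha$ and $c<\beta$. The crucial move is to descend to atoms of $[c,\,\cdot\,]$: choose $a_1$ with $c\lessdot a_1\le\alpha$ and $b_1$ with $c\lessdot b_1\le\beta$. Both lie in $X$ (they lie in $[c,\alpha]$ and $[c,\beta]$, respectively), they are distinct (a common lower bound of $\alpha$ and $\beta$ is $\le c$), and two distinct elements each covering $c$ have meet exactly $c$. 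Thus $a_1\wedge b_1=c\lessdot a_1$ and $c\lessdot b_1$, so the hypothesis applies and $d:=a_1\vee b_1\in X$.

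The purpose of introducing $d$ is that it pushes the relevant meet strictly above $c$, which is exactly what forces the induction to decrease. First I would show $\alpha\vee b_1\in X$ by considering the pair $(\alpha,d)$: since $a_1\le\alpha$ we have $\alpha\vee d=\alpha\vee b_1$, while $a_1\le\alpha\wedge d$ gives $\alpha\wedge d\ge a_1>c$; prepending the cover $c\lessdot a_1$ to a longest chain of $[\alpha\wedge d,\alpha\vee d]$ shows $N(\alpha,d)\le N(\alpha,\beta)-1$, so the inductive hypothesis yields $\alpha\vee d=\alpha\vee b_1\in X$. Then I would bootstrap to $\alpha\vee\beta$ by applying the same idea to the pair $(\alpha\vee b_1,\beta)$: here $(\alpha\vee b_1)\vee\beta=\alpha\vee\beta$ because $b_1\le\beta$, and $b_1\le(\alpha\vee b_1)\wedge\beta$ again forces this meet to exceed $c$, so the chain length strictly drops and the inductive hypothesis gives $\alpha\vee\beta\in X$, completing the induction.

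The main obstacle is precisely the locality of the hypothesis: it guarantees join-closure only for pairs that both cover their common meet, and a direct induction stalls because joining $\alpha$ and $\beta$ together produces auxiliary elements whose meet with the others is still $c$, so the interval $[\,\cdot\wedge\cdot,\,\cdot\vee\cdot\,]$ need not shrink. Overcoming this is the entire role of the atoms $a_1$ and $b_1$: folding $a_1$ (and later $b_1$) into the join raises the meet strictly above $c$ and thereby forces $N$ to decrease. The rest is routine bookkeeping -- confirming that each auxiliary element ($c$, $a_1$, $b_1$, $d$, and $\alpha\vee b_1$) indeed lies in $X$ via order-convexity and the minimum $m$, and checking the two chain-length inequalities -- which I would carry out but not dwell on.
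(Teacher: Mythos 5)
Your proof is correct, but it reaches the conclusion by a genuinely different route from the paper's. Both arguments share the same key maneuver: since the hypothesis only applies to pairs covering their common meet, one descends to elements covering $\alpha\wedge\beta$ inside $[\alpha\wedge\beta,\alpha]$ and $[\alpha\wedge\beta,\beta]$ (your $a_1,b_1$), checks that they lie in $X$ by order-convexity and that their meet is exactly $\alpha\wedge\beta$, and invokes the hypothesis to place their join in $X$. Where you diverge is the mechanism for climbing back up to $\alpha\vee\beta$. The paper argues by contradiction with an extremal choice: assuming $X$ has at least two maximal elements, it picks maximal elements $\beta,\beta'$ whose meet is maximal among all pairwise meets of distinct maximal elements, and shows that the join produced by the hypothesis forces a third maximal element $\beta''$ with $\beta\wedge\beta'\lessdot\gamma\leq\beta''\wedge\beta$, contradicting that extremal choice. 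You instead prove the (equivalent, given order-convexity and the minimum) statement that $X$ is closed under all binary joins, by induction on the length of a longest chain in $[\alpha\wedge\beta,\alpha\vee\beta]$: the auxiliary pairs $(\alpha,d)$ and $(\alpha\vee b_1,\beta)$ have their meets pushed strictly above $\alpha\wedge\beta$ while their joins stay below $\alpha\vee\beta$, so the chain length strictly drops and the inductive hypothesis applies. Your version is longer and requires the two-stage bootstrap, but it is constructive in flavor and makes the join-closure of $X$ explicit; the paper's is shorter but is purely an existence argument. All the individual steps you outline (distinctness of $a_1,b_1$, the meet computation, the chain-length inequalities) check out.
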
 
\begin{proof}
Let $\max X$ denote the set of maximal elements of $X$, and suppose instead that $\lvert\max X\rvert\geq 2$. Let $A=\{\alpha\wedge\alpha':\alpha,\alpha'\in\max X,\,\alpha\neq\alpha'\}$. Choose distinct $\beta,\beta'\in\max X$ such that $\beta\wedge\beta'$ is a maximal element of $A$. Because $X$ is order-convex and has a minimum element, we know that $\beta\wedge\beta'\in X$. The elements $\beta$ and $\beta'$ are incomparable in $Q$, so we can find $\gamma,\gamma'\in Q$ such that $\beta\wedge\beta'\lessdot\gamma\leq\beta$ and $\beta\wedge\beta'\lessdot\gamma'\leq\beta'$. Because $X$ is order-convex, we have $\gamma,\gamma'\in X$. Because $\gamma$ and $\gamma'$ both cover $\beta\wedge\beta'$, we must have $\beta\wedge\beta'=\gamma\wedge\gamma'$, so $\gamma\vee\gamma'\in X$ by the hypothesis of the proposition. Now choose $\beta''\in\max X$ such that $\gamma\vee\gamma'\leq\beta''$. We have $\beta\wedge\beta'\lessdot\gamma'\leq\beta''\wedge\beta'$, so $\beta\neq\beta''$. This implies that $\beta''\wedge\beta\in A$. We also have $\beta\wedge\beta'\lessdot \gamma\leq\beta''\wedge\beta$, which contradicts the fact that $\beta\wedge\beta'$ is a maximal element of $A$. 
\end{proof}

\section{Inversions Tableaux and Lehmer Tableaux}\label{sec:tableaux}  

Let $\LRT_{n-1}$ denote the $(n-1)$-th staircase Young diagram drawn by reflecting French conventions across a vertical axis. We index the rows of $\LRT_{n-1}$ as $1,\ldots,n-1$ from bottom to top, and we index the columns as $2,\ldots,n$ from left to right. For $1\leq i<j\leq n$, we index the box in row $i$ and column $j$ by the pair $(i,j)$. By identifying $\LRT_{n-1}$ with its set of boxes, we can write \[\LRT_{n-1}=\{(i,j)\in\mathbb Z^2:1\leq i<j\leq n\}.\] 
See \cref{fig:inversions_diagram_1}. 

For $1\leq i<j<k\leq n$, let $\Le(i,j,k)$ be the set $\{(i,j),(i,k),(j,k)\}$ of boxes in $\LRT_{n-1}$. We can visualize $\Le(i,j,k)$ by drawing a rectangle whose southwest, southeast, and northeast corners lie in the boxes $(i,j)$, $(i,k)$, and $(j,k)$, respectively; the northwest corner of this rectangle lies just above the diagonal of $\LRT_{n-1}$. We call $\Le(i,j,k)$ a \dfn{$\Le$-shape}. We define the \dfn{hook} of the box $(i,j)\in\LRT_{n-1}$ to be the set of boxes 
\[\hook(i,j)=\{(i',j):i<i'<j\}\cup\{(i,j'):i<j'<j\}\cup\{(i,j)\}.\]

Let $w\in\SSS_n$. An \dfn{inversion} of $w$ is a pair $(i,j)$ such that $1\leq i<j\leq n$ and ${w^{-1}(i)>w^{-1}(j)}$. The \dfn{inversions diagram} of $w$ is the set $\ID(w)$ of boxes in $\LRT_{n-1}$ indexed by the inversions of $w$; we often represent $\ID(w)$ by coloring its boxes in $\LRT_{n-1}$ (see \cref{fig:inversions_diagram_1}).\footnote{Our conventions for writing permutations in this article are slightly different from those in \cite{AF}; one can translate from one to the other by taking inverses. For example, what we call reduced pipe dreams, inversions diagrams, inversions tableaux, and Lehmer tableaux for a permutation $w$ are what the article \cite{AF} would call reduced pipe dreams, inversions diagrams, inversions tableaux, and Lehmer tableaux for $w^{-1}$.} 

\begin{figure}[ht]
  \begin{center}
  \includegraphics[height=2.857cm]{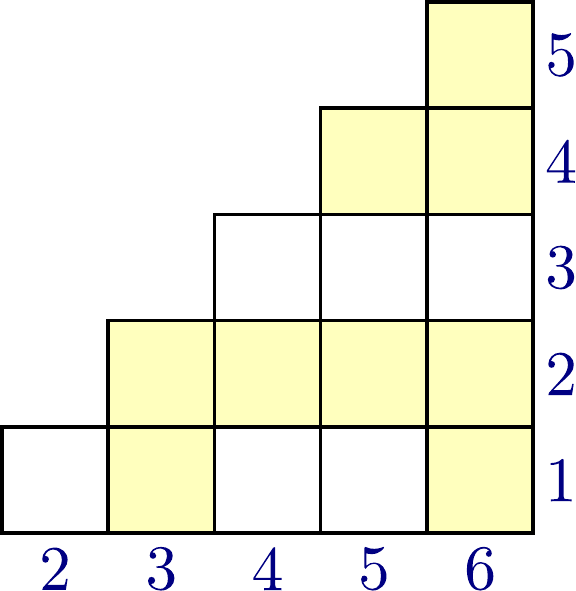}\qquad\qquad\includegraphics[height=2.857cm]{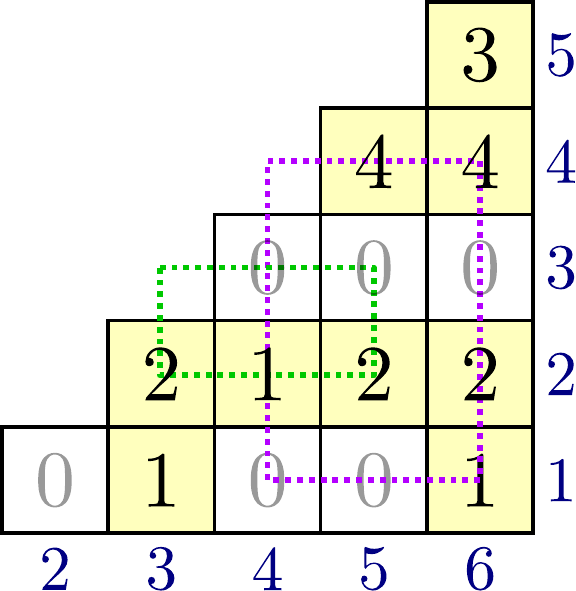}\qquad\qquad\includegraphics[height=2.858cm]{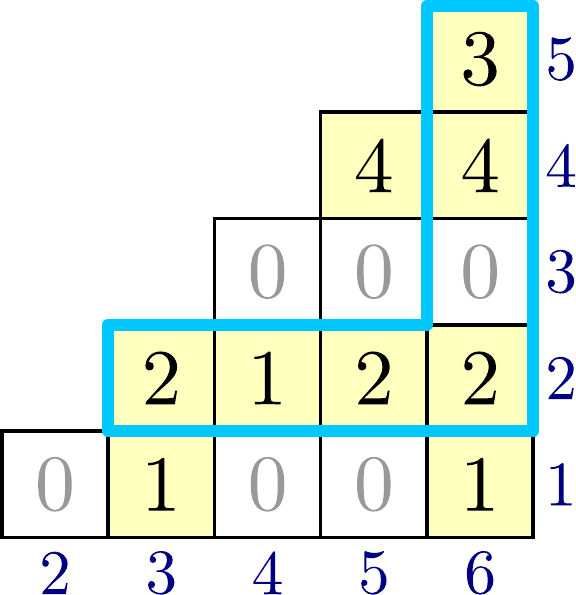}
  \end{center}
\caption{On the left is the inversions diagram of the permutation $w=361542$. In the middle is an inversions tableau for $w$. The {\color{Purple}purple} and {\color{MildGreen}green} dotted rectangles indicate the $\Le$-shapes ${\color{Purple}\Le(1,4,6)}$ and ${\color{MildGreen}\Le(2,3,5)}$. These $\Le$-shapes are balanced because ${\color{Purple}0\leq 1\leq 4}$ and ${\color{MildGreen}0\leq 2\leq 2}$. On the right is the same inversions tableau with ${\color{NewBlue}\hook(2,6)}$ outlined in {\color{NewBlue}blue}. The entries in this hook are $0,1,2,2,2,3,4$; the median of these numbers is $2$, which appears in the southeast corner of the hook, so the hook is balanced.}
\label{fig:inversions_diagram_1} 
\end{figure} 

For $\mathcal{D}\subseteq\LRT_{n-1}$, we define a \dfn{tableau} of shape $\mathcal D$ to be a map $\tau\colon\mathcal D\to\mathbb Z_{\geq 0}$; we represent $\tau$ by filling each box $(i,j)\in\mathcal D$ with the integer $\tau(i,j)$. Let $\T_{\mathcal D}$ denote the set of tableau of shape $\mathcal D$. We view $\T_{\mathcal D}$ as a poset with the \dfn{componentwise comparison} partial order $\leq$, which is defined so that $\tau\leq\tau'$ if and only if $\tau(i,j)\leq\tau'(i,j)$ for all $(i,j)\in\mathcal D$. Let $\mathcal D\subseteq\LRT_{n-1}$, and consider two tableau $\tau,\tau'\colon\mathcal D\to\mathbb Z_{\geq 0}$. The \dfn{componentwise maximum} of two tableaux $\tau,\tau'\in\T_{\mathcal D}$ is the tableau $\tau^\star\colon\mathcal D\to\mathbb Z_{\geq 0}$ defined so that $\tau^\star(i,j)=\max\{\tau(i,j),\tau'(i,j)\}$ for every $(i,j)\in\mathcal D$. 

Given a tableau $\tau\in\T_{\mathcal D}$, we say an integer $k$ \dfn{appears below} (respectively, \dfn{appears above}) a box $(i,j)$ in $\tau$ if there exists a positive integer $i'$ such that $i'<i$ (respectively, $i'>i$) and $\tau(i',j)=k$. Similarly, $k$ \dfn{appears to the left of} (respectively, \dfn{appears to the right of}) $(i,j)$ in $\tau$ if there exists a positive integer $j'$ such that $j'<j$ (respectively, $j'>j$) and $\tau(i,j')=k$. 

We will often need to restrict tableaux to smaller staircase shapes. For $w\in\SSS_n$, the set of boxes in $\ID(w)$ that also belong to $\LRT_{j-1}$ is the inversions diagram $\ID(\widehat w)$ of the permutation $\widehat w\in\SSS_j$ obtained from $w$ by deleting the numbers greater than $j$. Given a tableau $\tau\colon\LRT_{n-1}\to\mathbb Z_{\geq 0}$, we denote by $\tau_{\leq j}$ the tableau obtained by restricting $\tau$ to $\LRT_{j-1}$ (i.e., by deleting all columns strictly to the right of column $j$). 

Let $w\in\SSS_n$. A \dfn{column-injective tableau} for $w$ is a tableau $T\colon\LRT_{n-1}\to\mathbb Z_{\geq 0}$ such that the following conditions hold. 
\begin{enumerate}
\item[(1)] A box is filled with $0$ in $T$ if and only if it is not in $\ID(w)$.
\item[(2)] The nonzero entries in each column of $T$ are distinct. 
\end{enumerate} 
Let $\CIT(w)$ denote the set of column-injective tableaux for $w$.

Let $\tau\colon\LRT_{n-1}\to\mathbb Z_{\geq 0}$ be a tableau. We say a $\Le$-shape $\Le(i,j,k)$ in $\LRT_{n-1}$ is \dfn{balanced} in $\tau$ if \[\min\{\tau(i,j), \tau(j,k)\} \leq \tau(i,k) \leq \max\{\tau(i,j), \tau(j,k)\}\] 
(i.e., $\tau(i,k)$ is weakly between $\tau(i,j)$ and $\tau(j,k)$). For example, the set $\Le(1,4,6)$ is balanced in the tableau in the middle of \cref{fig:inversions_diagram_1} because $1$ lies weakly between $0$ and $4$. We say $\tau$ is \dfn{balanced} if every $\Le$-shape in $\LRT_{n-1}$ is balanced in $\tau$. We say $\hook(i,j)$ is \dfn{balanced} in $\tau$ if $\tau(i,j)$ is the median of its entries in $\tau$. For example, $\hook(2,6)$ is balanced in the tableau on the right of \cref{fig:inversions_diagram_1} because $2$ is the median of $0,1,2,2,2,3,4$. It is straightforward to show that a tableau $\tau\colon\LRT_{n-1}\to\mathbb Z_{\geq 0}$ is balanced if and only if $\hook(i,j)$ is balanced in $\tau$ for every $(i,j)\in\LRT_{n-1}$. 

\begin{definition}[{\cite{AF}}] 
Let $w\in\SSS_n$. An \dfn{inversions tableau} for $w$ is a balanced column-injective tableau for $w$ in which the entries in each row $i$ are all at most $i$. Let $\IT(w)$ denote the set of inversions tableaux for $w$. 
\end{definition} 
\cref{fig:inversions_diagram_1} depicts an inversions tableau for the permutation $361542$ in the middle and on the right.  

\begin{definition}
Let $T$ be a column-injective tableau for a permutation $w$. The \dfn{Lehmer form} of $T$ is the tableau $\Lambda(T)\in\T_{\ID(w)}$ defined as follows. For each $(i,j)\in\ID(w)$, let $\Lambda(T)(i,j)$ be the number of integers $k\in[T(i,j)-1]$ that do not appear below box $(i,j)$ in $T$. If $T$ is an inversions tableau for $w$, then we call $\Lambda(T)$ a \dfn{Lehmer tableau} for $w$. Let $\LT(w)=\Lambda(\IT(w))$ denote the set of Lehmer tableaux for $w$. We view $\LT(w)$ as a poset under the componentwise comparison order. 
\end{definition} 

It is straightforward to see that the map $\Lambda\colon\CIT(w)\to\T_{\ID(w)}$ is a bijection. Indeed, one can reconstruct each column of a column-injective tableau $T$ from its Lehmer form by working from bottom to top. Hence, by restricting to the set of inversions tableau for $w$, we obtain a bijection $\Lambda\colon\IT(w)\to\LT(w)$. This map is illustrated in \cref{fig:pipe_inversions_Lehmer}. 

Consider a reduced pipe dream $P\in\PD(w)$. The pipes labeled $i$ and $j$ cross each other if and only if $(i,j)$ is an inversion of $w$. Moreover, if $(i,j)$ is an inversion of $w$, then pipes $i$ and $j$ cross exactly once because $P$ is reduced. For each $(i,j)\in\ID(w)$, let $\row_P(i,j)$ be the index of the row in $P$ where pipes $i$ and $j$ cross (rows in pipe dreams are indexed from top to bottom). Let $\Theta(P)\colon\LRT_{n-1}\to\mathbb Z_{\geq 0}$ be the tableau defined by 
\[\Theta(P)(i,j)=\begin{cases}   \row_P(i,j) & \text{if } (i,j)\in\ID(w); \\
    0 & \text{otherwise}.
\end{cases}\]
\cref{fig:pipe_inversions_Lehmer} illustrates the map $\Theta$. For example, in the pipe dream on the left of that figure, the pipes $5$ and $6$ cross each other in row $3$, so the tableau in the middle has a $3$ in the box $(5,6)$. 

Axelrod-Freed's primary motivation for introducing inversions tableaux stems from the following theorem.

\begin{theorem}[{\cite{AF}}]
For each $w\in\SSS_n$, the map $\Theta$ is a bijection from $\PD(w)$ to $\IT(w)$. 
\end{theorem}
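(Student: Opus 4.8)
The plan is to establish that $\Theta\colon\PD(w)\to\IT(w)$ is a bijection by verifying three things: that $\Theta(P)$ is genuinely an inversions tableau for every $P\in\PD(w)$, that $\Theta$ is injective, and that $\Theta$ is surjective. The injectivity should be the easy direction, since a reduced pipe dream is determined by recording, for each inversion $(i,j)$ of $w$, the row in which pipes $i$ and $j$ cross: two reduced pipe dreams for the same $w$ with identical crossing rows for all inversions must have identical tiles, because every crossing tile is accounted for by exactly one inversion and the elbow/bump tiles are then forced. Thus $\Theta(P)=\Theta(P')$ forces $P=P'$.

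First I would check that $\Theta(P)$ lies in $\IT(w)$. By construction $\Theta(P)(i,j)=0$ exactly when $(i,j)\notin\ID(w)$, giving condition (1) of a column-injective tableau. For the column-injectivity condition (2), I would argue that two distinct inversions $(i,j)$ and $(i',j)$ sharing the same larger index $j$ correspond to crossings of pipe $j$ with two different pipes; since pipe $j$ passes through each row of column $j$'s relevant region at most once in a reduced pipe dream, these two crossings occur in distinct rows, so the nonzero entries in each column are distinct. The row-bound condition (entries in row $i$ are at most $i$) should follow from the geometry of $\ULT_n$: a crossing recorded in box $(i,j)$ of $\LRT_{n-1}$ happens in a pipe-dream row whose index is bounded by $i$, because of how pipe $i$ enters on the west side in row $i$ and crossings involving it cannot occur above that row. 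The balancedness condition is the most delicate of these: I would verify that for each $\Le$-shape $\Le(i,j,k)$, the crossing row of pipes $i$ and $k$ lies weakly between the crossing rows of $(i,j)$ and $(j,k)$, which is a statement about how three pipes interleave in a reduced pipe dream — this is essentially a local planarity/non-recrossing argument tracing the three pipes through the diagram.

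For surjectivity I would either construct an explicit inverse — reading an inversions tableau $T$ and reconstructing the unique pipe dream whose crossing rows match $T$ — or invoke a dimension/cardinality count if one is available (e.g.\ both sets are known to have the same size via a separate enumerative identity). The cleaner route is the explicit reconstruction: given $T\in\IT(w)$, place a crossing tile in the appropriate row for each inversion as dictated by $T$ and show the balancedness and column-injectivity guarantee that the resulting tile arrangement is a well-defined reduced pipe dream for $w$ (no two pipes cross twice, and the pipes exit producing exactly $w$).

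The main obstacle I expect is the balancedness verification and its converse in surjectivity. Proving that every reduced pipe dream yields a balanced tableau requires a careful combinatorial argument about the relative positions of three pipes' crossings, and conversely showing that any balanced column-injective tableau satisfying the row bounds actually reconstructs to a \emph{reduced} pipe dream (rather than one where some pair of pipes crosses twice, or where the exit permutation is wrong) is the crux of the bijection. Since the statement is attributed to \cite{AF}, I would follow its structure closely, but the heart of the matter is this three-pipe balancedness correspondence, which encodes precisely the reducedness and planarity constraints in tableau-theoretic form.
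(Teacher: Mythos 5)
The paper does not prove this statement; it is imported verbatim from \cite{AF}, so there is no in-paper argument to compare against. Judged on its own terms, your outline has the right architecture (well-definedness, injectivity, surjectivity via explicit reconstruction), and the easy verifications are correctly handled: condition (1) is definitional, column-injectivity follows because pipe $j$ crosses each other pipe at most once and passes through any given row at most once, and the row bound follows because a pipe starting in row $i$ on the west side only ever moves north and east, hence stays in rows $\leq i$, so the crossing of pipes $i<j$ lies in a row of index at most $i$.

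However, the two steps you yourself flag as ``the most delicate'' and ``the crux'' are precisely the entire mathematical content of the theorem, and you do not supply arguments for them: (a) that for every $\Le$-shape $\Le(i,j,k)$ the crossing row of pipes $i$ and $k$ lies weakly between those of $(i,j)$ and $(j,k)$ in any reduced pipe dream, and (b) that conversely every balanced column-injective tableau with the row bounds reconstructs to a well-defined \emph{reduced} pipe dream for $w$. Saying these follow from ``a local planarity/non-recrossing argument'' names the phenomenon but is not a proof; the three-pipe interleaving analysis is genuinely nontrivial and is where \cite{AF} does its work. A smaller but real issue is your injectivity argument: knowing the row of each inversion's crossing does not \emph{immediately} force the tiles, since one must also recover the columns; this requires the Billey--Jockusch--Stanley compatible-sequence correspondence (within each row the order of crossings is forced by the reduced-word structure), not merely the observation that ``every crossing tile is accounted for by exactly one inversion.'' As written, the proposal is a correct table of contents for a proof rather than a proof.
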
 

We now have a bijection $\Theta$ from the set $\PD(w)$ of reduced pipe dreams for $w$ to the set $\IT(w)$ of inversions tableaux for $w$, and we also have the bijection $\Lambda$ from $\IT(w)$ to the set $\LT(w)$ of Lehmer tableaux for $w$. Thus, we have the composite bijection 
\[\Phi=\Lambda\circ\Theta\colon\PD(w)\to\LT(w).\] 
See \cref{fig:pipe_inversions_Lehmer}. See also \cref{fig:big_lattice}, where each element of a chute move poset is represented as a reduced pipe dream, an inversions tableau, and a Lehmer tableau, all stuck together (we use black numbers for entries in the inversions tableau and red numbers for entries in the Lehmer tableau). One could alternatively view the partial order in that figure as the componentwise comparison order on the Lehmer tableau; we will see in \cref{sec:isomorphism} that these two posets are indeed isomorphic (see \cref{thm:isomorphism}). In \cref{fig:big_lattice}, each edge of the Hasse diagram is colored according to the leftmost box that changes in the Lehmer tableau when one traverses the edge. Some edges also have an extra colored disc, corresponding to the color of another box that changes in the Lehmer tableau. 

\begin{figure}[ht]
  \begin{center}
  \includegraphics[height=3.556cm]{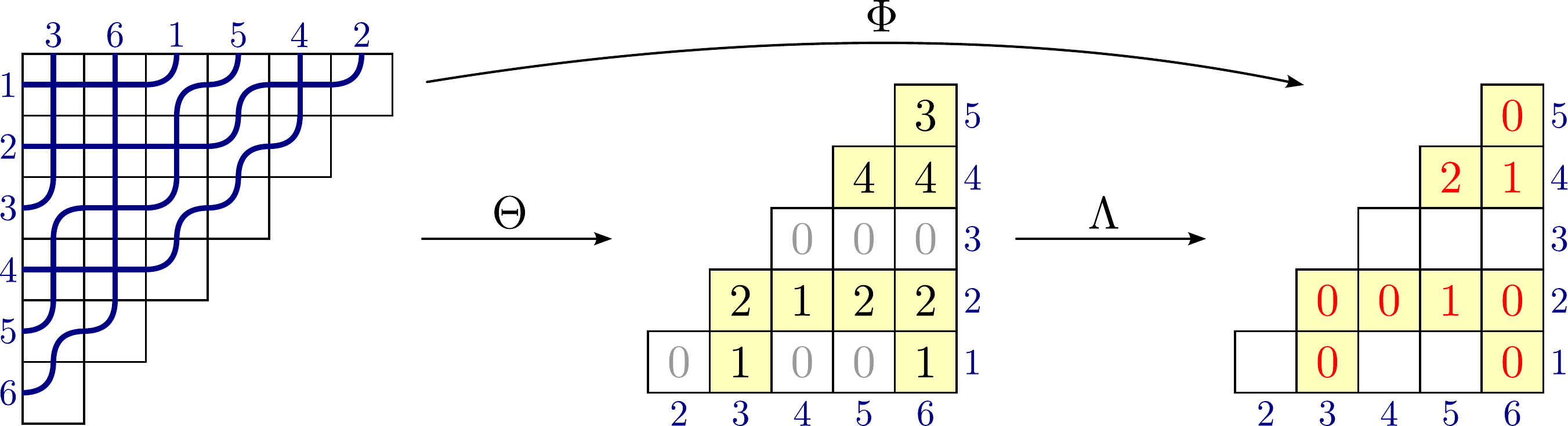}
  \end{center}
\caption{ An illustration of the maps $\Theta$ and $\Lambda$ and their composition $\Phi=\Lambda\circ\Theta$.  }\label{fig:pipe_inversions_Lehmer} 
\end{figure}

We now discuss the operations on inversions tableaux that correspond to chute moves. 

Let $T$ be a column-injective tableau for a permutation $w$. Consider a box ${(i,j)\in\ID(w)}$, and let $T(i,j)=a$. Let $b$ be the smallest integer that is greater than $a$ and does not appear below $(i,j)$ in $T$. If $b$ does not appear in column $j$ of $T$, then we can perform a \dfn{pure increment} to box $(i,j)$ in $T$ by replacing the entry $a$ in box $(i,j)$ with the entry $b$. If $b$ does appear in column $j$ of $T$, then we can perform a \dfn{trade increment} to box $(i,j)$ in $T$ by swapping the locations of the entries $a$ and $b$ in column $j$. In general, we can \dfn{increment} box $(i,j)$ in $T$ by performing either a pure increment or a trade increment to box $(i,j)$. Let $\uparrow_{(i,j)}\!T$ be the tableau obtained from $T$ by incrementing box $(i,j)$; note that $\uparrow_{(i,j)}\!T$ is also a column-injective tableau for $w$. The motivation for this definition comes from the following straightforward lemma. 

\begin{lemma}[{\cite{AF}}]\label{obs:incrementing_adding} 
If $T$ is a column-injective tableau for a permutation $w$, then $\Lambda(\uparrow_{(i,j)}\!T)$ is obtained from $\Lambda(T)$ by adding $1$ to the entry in box $(i,j)$. 
\end{lemma}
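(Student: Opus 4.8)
The plan is to prove \cref{obs:incrementing_adding} by directly comparing the Lehmer forms $\Lambda(T)$ and $\Lambda(\uparrow_{(i,j)}\!T)$ box-by-box, using the definition that $\Lambda(S)(i',j')$ counts the integers $k\in[S(i',j')-1]$ that do \emph{not} appear below box $(i',j')$ in $S$. First I would split into the two cases from the definition of incrementing: the pure increment and the trade increment. Throughout, let $a=T(i,j)$ and let $b$ be the smallest integer greater than $a$ that does not appear below $(i,j)$ in $T$. The key structural fact I would extract is that no integer strictly between $a$ and $b$ can fail to appear below $(i,j)$; by minimality of $b$, every integer $k$ with $a<k<b$ \emph{does} appear below $(i,j)$ in $T$.

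For the \textbf{pure increment}, only the entry in box $(i,j)$ changes: $T(i,j)=a$ becomes $b$, and every other box is untouched, so the set of entries appearing below any box $(i',j')\ne(i,j)$ is unaffected and $\Lambda$ is unchanged away from $(i,j)$. (One should also check that changing box $(i,j)$ itself does not alter what appears below \emph{other} boxes in column $j$ that sit above $(i,j)$; but $\Lambda(i',j)$ for $i'>i$ depends only on boxes strictly below $(i',j)$, and whether we call the entry in row $i$ an $a$ or a $b$, the count of missing integers up to $S(i',j)-1$ is the same because $a$ and $b$ both lie below $(i',j)$ or the interval $[a,b)$ is accounted for symmetrically — this is the point to verify carefully.) At box $(i,j)$ itself I would compute $\Lambda(\uparrow_{(i,j)}\!T)(i,j)-\Lambda(T)(i,j)$ as the number of integers in $[b-1]\setminus[a-1]=\{a,a+1,\dots,b-1\}$ that do not appear below $(i,j)$; by the minimality observation the only such integer is $a$ itself, giving a net change of exactly $+1$.

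For the \textbf{trade increment}, the entries $a$ and $b$ swap positions within column $j$: say $a$ sits in row $i$ and $b$ sits in some row $i''$, and after the move $b$ is in row $i$ and $a$ is in row $i''$. Here I must track $\Lambda$ in every box of column $j$. Since $b$ appears in column $j$ and does not appear below $(i,j)$, we have $i''>i$ (the $b$ is above $(i,j)$), and every integer strictly between $a$ and $b$ appears below $(i,j)$ and hence below row $i''$ as well. The crux is to show that for every box in column $j$ other than $(i,j)$, the count of missing integers is preserved: for boxes below row $i$, nothing below them changed; for boxes at or above row $i''$, the multiset of entries strictly below them is unchanged by the swap (both $a$ and $b$ remain below); for boxes strictly between rows $i$ and $i''$, one checks that swapping $a$ down and $b$ up exchanges which of $a,b$ lies below without changing the total count of integers up to the threshold that are missing, precisely because all of $a+1,\dots,b-1$ already appear even lower. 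This reduces the trade case to the same computation at box $(i,j)$ as before, again yielding $+1$.

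The main obstacle I anticipate is the bookkeeping in the trade-increment case: verifying that $\Lambda$ is genuinely unchanged at all the boxes of column $j$ lying strictly between rows $i$ and $i''$, where both the threshold value $S(i',j)$ and the set of entries below $(i',j)$ are affected by the swap. The clean way to dispatch this is to observe that for such an intermediate box $(i',j)$ with entry $c$, the intervals $[c-1]$ and the ``appears below'' sets change only through the presence or absence of $a$ and $b$ below $(i',j)$; since $a<c$ and the integers between $a$ and $b$ all appear strictly below row $i$, the contribution of the swapped pair $\{a,b\}$ to the missing-integer count below $(i',j)$ is invariant. Once this invariance is established, the only surviving change is the $+1$ at box $(i,j)$, completing the proof in both cases.
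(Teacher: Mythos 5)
The paper itself gives no proof of this lemma (it is quoted from \cite{AF} and described there as straightforward), so your proposal can only be judged against the definitions. Your overall strategy --- a box-by-box comparison of the two Lehmer forms, split into pure and trade increments, with the key observation that every integer strictly between $a$ and $b$ appears below $(i,j)$ --- is the right one, and your treatment of box $(i,j)$ itself, of the boxes below row $i$, and of the boxes strictly between rows $i$ and $i''$ is essentially correct (modulo the unstated but trivial case $c<a$ for the intermediate boxes, and the need to invoke column-injectivity to rule out $a<c<b$).

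There is, however, a concrete error in the trade-increment case at the partner box $(i'',j)$, which you fold into ``boxes at or above row $i''$.'' For that box the justification you give is false: the multiset of entries \emph{strictly below} $(i'',j)$ is \emph{not} unchanged by the swap (before the swap only $a$ lies below it in row $i$; afterwards only $b$ does), and --- more importantly --- the entry of that box itself changes from $b$ to $a$, so its threshold drops from $[b-1]$ to $[a-1]$. Neither effect is addressed by your argument, and $(i'',j)$ is arguably the most delicate box in the whole proof. The conclusion is still true, but it needs its own computation: since $a$ sits in row $i<i''$ and every integer in $(a,b)$ appears below row $i$, all of $a,a+1,\dots,b-1$ appear below $(i'',j)$ in $T$, so $\Lambda(T)(i'',j)$ already equals the number of $k\in[a-1]$ missing below $(i'',j)$; and replacing the $a$ in row $i$ by $b$ does not affect which integers $k\le a-1$ appear below $(i'',j)$, so $\Lambda(\uparrow_{(i,j)}\!T)(i'',j)=\Lambda(T)(i'',j)$. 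With this case repaired (and the pure-increment check for boxes above row $i$ carried out as you indicate, using that no entry of column $j$ above row $i$ can lie in $(a,b)$), the proof is complete.
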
 

It follows from \cref{obs:incrementing_adding} that incrementing is commutative. Thus, it makes sense to speak about incrementing a multiset $\mathcal M$ of boxes in a column-injective tableau (the number of times each box is incremented is its multiplicity in the multiset). Let $\uparrow_{\mathcal M}\!T$ denote the tableau obtained from $T$ by incrementing $\mathcal M$. For each box $(i,j)$, we obtain the entry $\Lambda(\uparrow_{\mathcal M}\!T)(i,j)$ by adding the multiplicity of $(i,j)$ in $\mathcal M$ to the entry $\Lambda(T)(i,j)$.  

The following proposition shows how chute moves relate to incrementing. 

\begin{proposition}[{\cite[Proposition~6.3]{AF}}]\label{prop:chute} 
Let $P_1,P_2\in\PD(w)$ be reduced pipe dreams for a permutation $w\in\SSS_n$, and let $(\xx,\yy)$ be an inversion of $w$. Let $T_1=\Theta(P_1)$ and $T_2=\Theta(P_2)$. Let $p_0=T_1(\xx,\yy)$, and let $q_0$ be the smallest integer that is greater than $p_0$ and that does not appear below $(\xx,\yy)$ in $T$. Then \[P_2=\C_{\xx,\yy}(P_1)\] if and only if there exists a set $Y\subseteq\{\yy+1,\ldots,n\}$ such that the following conditions hold: 
\begin{itemize}
\item $T_2=\uparrow_{\mathcal B}\!T_1$, where $\mathcal B=\{(\xx,\yy)\}\cup\{(\xx,y):y\in Y\}$; 
\item $T_1(\xx,y)=T_2(\yy,y)=p_0$ and $T_2(\xx,y)=T_1(\yy,y)=q_0$ for every $y\in Y$; 
\item no box in column $\yy$ has entry $q_0$ in $T_1$. 
\end{itemize} 
Moreover, if $P_2=\C_{\xx,\yy}(P_1)$, then $Y$ is the set of indices of the pipes that cross vertically through the rectangle $\RR(P_1,P_2)$ in $P_1$ (and also in $P_2$). 
\end{proposition}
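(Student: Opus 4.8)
The plan is to analyze a chute move directly at the level of pipes and then translate the resulting changes in crossing rows into the language of incrementing; the converse will then follow from the same analysis together with the injectivity of the bijection $\Theta\colon\PD(w)\to\IT(w)$. Throughout I would use the basic fact that at the unique crossing of two pipes in a reduced pipe dream, the horizontal strand is the one emerging farther right along the north boundary, which (since the pipes form an inversion) is the strand of smaller label; hence the vertical strand always carries the larger label. Assume first that $P_2=\C_{\xx,\yy}(P_1)$, and let $R=\RR(P_1,P_2)$ occupy pipe-dream rows $r_1<r_2$ and columns $c_1<c_2$, the inequalities being strict because the prescribed corner tiles are incompatible when $R$ has height or width $1$.

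First I would trace the pipes through $R$. At $\NE(R)$ the crossing has a horizontal and a vertical strand, which the orientation fact identifies as $\xx$ and $\yy$ respectively, consistent with $\row_{P_1}(\xx,\yy)=r_1=p_0$. Following the bump corners shows that in $P_1$ pipe $\xx$ runs along the top row $r_1$ while $\yy$ runs along the bottom row $r_2$ and up the right column $c_2$, and that after the move the two strands exchange these routes, so that in $P_2$ pipe $\xx$ runs along row $r_2$ and up column $c_2$ while $\yy$ runs up column $c_1$ and along row $r_1$. Outside $R$ the two pipes are unchanged; in particular $\xx$ and $\yy$ still cross exactly once, but now at $\SW(R)$ in row $r_2$ rather than at $\NE(R)$ in row $r_1$. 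Letting $Y$ be the labels of the strands running vertically through the middle columns $c_1+1,\dots,c_2-1$, the orientation fact forces $y>\yy$ for each $y\in Y$, and tracing shows that each such $y$ meets $\xx$ at row $r_1$ and $\yy$ at row $r_2$ in $P_1$, with these two rows swapping in $P_2$.

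It then remains to recognize this as $\uparrow_{\mathcal B}\!T_1$ and to verify $q_0=r_2$. For each $r$ with $r_1<r<r_2$, the strand entering $R$ from the west in row $r$ crosses $\xx$ in column $c_1$ (hence has label less than $\xx$) and then crosses $\yy$ in column $c_2$ at row $r$, so every value in $\{r_1+1,\dots,r_2-1\}$ appears below $(\xx,\yy)$ in column $\yy$; meanwhile $\yy$ meets only the pipes of $Y$ (all of label greater than $\yy$) in row $r_2$ and never lies in row $r_2$ outside $R$, so $r_2$ occurs nowhere in column $\yy$ of $T_1$. This yields both $q_0=r_2$ and the third bullet at once. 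The entry at $(\xx,\yy)$ then moves from $p_0$ to the absent value $q_0$, which is precisely a pure increment, and the same contiguity argument applied in each column $y\in Y$ shows that the move swaps $p_0$ and $q_0$ between rows $\xx$ and $\yy$ there, which is precisely a trade increment. Hence $T_2=\uparrow_{\mathcal B}\!T_1$ with $\mathcal B=\{(\xx,\yy)\}\cup\{(\xx,y):y\in Y\}$, and all three bullets hold.

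For the converse I would use injectivity of $\Theta$: it suffices to show that the given data force $\C_{\xx,\yy}$ to be applicable to $P_1$ with vertical-through set exactly $Y$, for then the forward direction gives $\Theta(\C_{\xx,\yy}(P_1))=\uparrow_{\mathcal B}\!T_1=T_2=\Theta(P_2)$, whence $P_2=\C_{\xx,\yy}(P_1)$. Here the crossing of $\xx$ and $\yy$ at row $p_0$ supplies $\NE(R)$; the hypotheses that $\xx$ meets every $y\in Y$ at row $p_0$ and $\yy$ meets them at row $q_0$ locate these strands as verticals between rows $p_0$ and $q_0$; and the absence of $q_0$ from column $\yy$ guarantees an unobstructed bottom-left corner. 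I expect the main obstacle to be exactly this geometric bookkeeping: proving that the crossings along rows $r_1$ and $r_2$ occupy contiguous blocks of columns and genuinely bound a legitimate chute rectangle with the correct bump and cross pattern (rather than merely occurring at the right rows), together with handling the boundary case in which $\SE(R)$ is a diagonal elbow tile and the degenerate case $Y=\varnothing$. The identity $q_0=r_2$, which binds the abstractly defined quantity $q_0$ to the concrete bottom row of $R$, is the linchpin joining the local and tableau descriptions.
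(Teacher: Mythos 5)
This proposition is not proved in the paper at all: it is imported verbatim from \cite[Proposition~6.3]{AF}, so there is no in-paper argument to compare yours against. Judged on its own terms, your forward direction is sound and essentially complete. The orientation fact (larger label is the vertical strand at any crossing of a reduced pipe dream), the monotonicity of pipes, the identification of the horizontal strands $h_r$ of the interior rows as pipes of label less than $\xx$, and the resulting conclusions that every value in $\{r_1+1,\dots,r_2-1\}$ appears below $(\xx,\yy)$ while $r_2$ appears nowhere in column $\yy$ --- hence $q_0=r_2$ --- are all correct, and they do yield the pure increment at $(\xx,\yy)$ and the trade increments at the $(\xx,y)$, $y\in Y$, as you describe.

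The converse, however, contains a genuine gap, and it sits exactly where you predicted. Your reduction (show the chute move is applicable with vertical-through set exactly $Y$, then invoke the forward direction and injectivity of $\Theta$) is the right strategy, but the applicability itself is the whole content of that direction and is not established. The third bullet only controls the \emph{right} side of the would-be rectangle: it forces pipe $\yy$ to run vertically through crosses in a single column from row $p_0$ down to row $q_0-1$ and to turn at a bump or elbow at $(q_0,c_2)$, i.e.\ it produces $\SE(R)$, not, as you assert, ``an unobstructed bottom-left corner.'' Producing $\SW(R)$ and $\NW(R)$ requires showing that pipe $\xx$ enters row $p_0$ from the south at a bump (ruling out the case $p_0=\xx$ with $\xx$ entering row $p_0$ from the west boundary through a cross), that pipe $\xx$ then descends column $c_1$ through crosses to row $q_0$, and that pipe $\yy$'s horizontal segment in row $q_0$ begins at that \emph{same} column $c_1$ with a bump there; none of this follows from the data you cite. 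To get it one must actually use the first bullet --- that $T_2=\uparrow_{\mathcal B}\!T_1$ is the inversions tableau of a genuine pipe dream, hence balanced and column-injective, and that every box outside $\mathcal B\cup\{(\yy,y):y\in Y\}$ is unchanged --- to constrain the vertical strands met by $\xx$ in row $p_0$ and by $\yy$ in row $q_0$ between columns $c_1$ and $c_2$ (for instance, a vertical strand $v\notin Y\cup\{\yy\}$ in the top row would make $\Le(\xx,\yy,v)$ unbalanced in $T_2$). The same considerations are needed for your final requirement that the vertical-through set equals $Y$ rather than merely contains it. Until that geometric reconstruction is carried out, the ``if'' direction is unproven.
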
 

\begin{example}
\cref{fig:chute_inversions} illustrates \cref{prop:chute} with $w=41865732\in\SSS_8$ and $(\xx,\yy)=(3,5)$. In this example, we have $Y=\{6,8\}$, $p_0=1$, and $q_0=3$. Note that the inversions tableau on the right is obtained from the one on the left by performing a pure increment to box $(3,5)$ and performing trade increments to boxes $(3,6)$ and $(3,8)$. 
\end{example}

\begin{figure}[ht]
  \begin{center}
  \includegraphics[height=4.66cm]{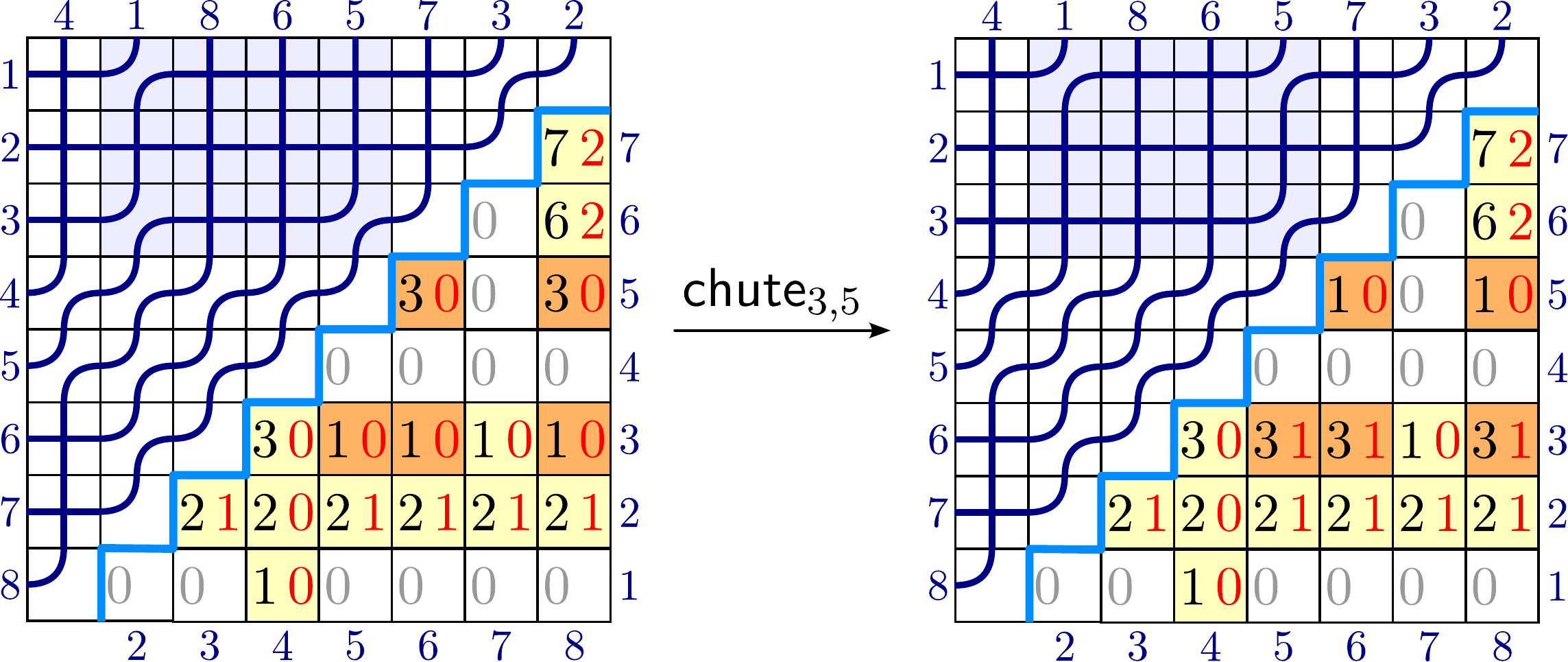}
  \end{center}
\caption{Applying a chute move to a reduced pipe dream, along with its corresponding inversions tableau and Lehmer tableau. Inversions tableaux are drawn with black numbers, while Lehmer tableaux are drawn with {\color{red} red} numbers. The boxes whose entries in the two inversions tableaux differ are {\color{Orange}orange}.}\label{fig:chute_inversions} 
\end{figure}

Suppose $P\leqC P'$. It follows from \cref{prop:chute} that the inversions tableau $\Theta(P')$ is obtained from $\Theta(P)$ by incrementing some multiset of boxes. By \cref{obs:incrementing_adding}, $\Phi(P')$ is obtained from $\Phi(P)$ by increasing the entries in some boxes; that is, $\Phi(P)\leq\Phi(P')$. This yields the following corollary. 

\begin{corollary}[{\cite[Corollary~6.16]{AF}}]\label{cor:easy_direction} 
If $P,P'\in\PD(w)$ are such that ${P\leqC \!P'}$, then ${\Phi(P)\!\leq\!\Phi(P')}$. 
\end{corollary}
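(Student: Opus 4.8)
The final statement to prove is Corollary~\ref{cor:easy_direction}: if $P \leqC P'$, then $\Phi(P) \leq \Phi(P')$. The plan is to reduce this to the single-cover-relation case and then invoke the structural results already assembled in the excerpt. Since $P \leqC P'$ means $P'$ is reachable from $P$ by a sequence of chute moves, and since the componentwise order $\leq$ on $\LT(w)$ is transitive, it suffices to prove the claim when $P' = \C_{\xx,\yy}(P)$ is obtained from $P$ by a single chute move. The whole corollary then follows by composing the inequalities along the chain.

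For the single-move case, the strategy is to translate the chute move into the language of incrementing via Proposition~\ref{prop:chute}, and then translate incrementing into the componentwise order via Lemma~\ref{obs:incrementing_adding}. Concretely, I would set $T = \Theta(P)$ and $T' = \Theta(P')$. By Proposition~\ref{prop:chute}, there is a set $Y \subseteq \{\yy+1,\dots,n\}$ such that $T' = \uparrow_{\mathcal B}\!T$, where $\mathcal B = \{(\xx,\yy)\} \cup \{(\xx,y) : y \in Y\}$. In other words, $T'$ is obtained from $T$ by incrementing a multiset (in fact a set) of boxes. This is precisely the hypothesis needed to apply the incrementing machinery.

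The key computational observation is that incrementing never decreases any Lehmer entry. By Lemma~\ref{obs:incrementing_adding}, incrementing a single box $(i,j)$ in a column-injective tableau has the effect, on the Lehmer form, of adding $1$ to the entry in box $(i,j)$ and leaving all other entries unchanged. Iterating this over the multiset $\mathcal B$ (using the stated commutativity of incrementing and the remark that $\Lambda(\uparrow_{\mathcal M}\!T)(i,j)$ equals $\Lambda(T)(i,j)$ plus the multiplicity of $(i,j)$ in $\mathcal M$), every box of $\Lambda(T')$ is either equal to or exactly one more than the corresponding box of $\Lambda(T)$. Hence $\Phi(P) = \Lambda(T) \leq \Lambda(T') = \Phi(P')$ in the componentwise order. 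This is exactly the inequality we want for one cover relation.

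I expect no serious obstacle here: the corollary is genuinely ``easy'' because all the hard structural work has been front-loaded into Proposition~\ref{prop:chute} (which identifies chute moves with incrementing of an explicit multiset of boxes) and Lemma~\ref{obs:incrementing_adding} (which shows incrementing corresponds to adding $1$ to a Lehmer entry). The only point requiring a little care is the reduction to a single move: one must observe that $\leqC$ is, by definition, the reflexive-transitive closure of the single-chute-move relation, and that $\leq$ on $\LT(w)$ is transitive, so chaining the one-step inequalities is legitimate. I would state this reduction explicitly at the start and then dispatch the single-step case in one or two sentences using the two cited results.
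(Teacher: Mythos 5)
Your proposal is correct and follows essentially the same route as the paper: both translate chute moves into incrementing via \cref{prop:chute} and then apply \cref{obs:incrementing_adding} to conclude that Lehmer entries can only increase. The only cosmetic difference is that you reduce to a single cover relation and chain the inequalities, whereas the paper aggregates the increments along the whole chain into one multiset before applying the lemma; these are interchangeable.
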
 

\section{Preparatory Results}\label{sec:lemmas} 

This section is devoted to establishing a series of technical preparatory lemmas. 

\begin{lemma}\label{lem:Colinjective}
Let $T,T'\in\CIT(w)$ be column-injective tableaux for a permutation $w\in\SSS_n$. Fix a box $(i,j)$. Suppose $T'=\uparrow_{\mathcal M}\!T$, where $\mathcal M$ is a multiset of boxes. Let $\RRR$ be the set of positive integers $r$ satisfying the inequalities $r<i$, $T(r,j)<T(i,j)$, and $T'(r,j)>T'(i,j)$. Then ${|\RRR|\geq T(i,j)-T'(i,j)}$. Moreover, if $(i,j)$ is not in $\mathcal M$, then $|\RRR|=T(i,j)-T'(i,j)$. 
\end{lemma}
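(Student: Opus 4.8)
The plan is to combine the identity relating Lehmer forms under incrementing with a careful tracking of column $j$. First I would dispose of the trivial case: we may assume $(i,j)\in\ID(w)$, since otherwise $T(i,j)=T'(i,j)=0$, the box $(i,j)$ cannot lie in $\mathcal M$, and $\RRR=\varnothing$, so both assertions hold. For a column-injective tableau $U$, the definition of the Lehmer form together with column-injectivity gives $\Lambda(U)(i,j)=U(i,j)-1-\delta(U)$, where $\delta(U)=\lvert\{r:r<i,\ 0<U(r,j)<U(i,j)\}\rvert$ counts the positive entries below $(i,j)$ in column $j$ that are smaller than $U(i,j)$. By \cref{obs:incrementing_adding} and the commutativity of incrementing, $\Lambda(T')(i,j)=\Lambda(T)(i,j)+m$, where $m\geq 0$ is the multiplicity of $(i,j)$ in $\mathcal M$. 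Subtracting the two instances of the identity yields
\[ T(i,j)-T'(i,j)=\bigl(\delta(T)-\delta(T')\bigr)-m. \]

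Next I would unwind $\delta(T)-\delta(T')$ row by row. For each $r<i$ with $(r,j)\in\ID(w)$, the contribution to $\delta(T)-\delta(T')$ is $+1$ exactly when $r\in\RRR$, is $-1$ exactly when $r$ lies in the ``reverse'' set $\RRR^{-}=\{r<i:T(r,j)>T(i,j),\ T'(r,j)<T'(i,j)\}$, and is $0$ otherwise; rows with $(r,j)\notin\ID(w)$ contribute $0$ to both terms. Hence $\delta(T)-\delta(T')=\lvert\RRR\rvert-\lvert\RRR^{-}\rvert$, and so $T(i,j)-T'(i,j)=\lvert\RRR\rvert-\lvert\RRR^{-}\rvert-m$. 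Since $m\geq 0$ and $\lvert\RRR^{-}\rvert\geq 0$, this gives $\lvert\RRR\rvert\geq T(i,j)-T'(i,j)$, which proves the inequality.

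It remains to prove equality when $(i,j)\notin\mathcal M$; then $m=0$, so I must show $\RRR^{-}=\varnothing$. The plan is to realize $T'$ by performing the increments of $\mathcal M$ that lie in column $j$ one at a time (the increments in other columns leave column $j$ untouched, and incrementing is commutative), producing a sequence of column-injective tableaux from $T$ to $T'$ in which $(i,j)$ is never incremented. I would then show that for each fixed $r<i$ with $(r,j)\in\ID(w)$, the sign of $U(r,j)-U(i,j)$ never changes from positive to negative along this sequence; since these entries stay positive and distinct, the sign is never $0$, so this non-decreasing behavior forces every $r$ with $T(r,j)>T(i,j)$ to satisfy $T'(r,j)>T'(i,j)$, i.e.\ $\RRR^{-}=\varnothing$. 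A single increment at a box $(s,j)$ changes the value of at most two boxes, and $(i,j)$ can change only by being the upper partner of a trade increment, which strictly decreases its entry; checking the cases in which $(r,j)$ and/or $(i,j)$ change shows that the only configuration that could flip the sign from positive to negative is the one where $(r,j)$ is the upper partner of a trade increment performed at a lower box $(s,j)$ with $s<r<i$.

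I expect this last case to be the main obstacle, and it is where column-injectivity does the real work. In that case the traded values $a=U(s,j)$ and $b=U(r,j)$ would have to satisfy $a<U(i,j)<b$ for a sign flip to occur, and $b$ is by definition the smallest integer exceeding $a$ that does not appear below $(s,j)$. But $U(i,j)$ is a positive integer lying strictly between $a$ and $b$, and by column-injectivity it occurs in column $j$ only at row $i>s$, so it does not appear below $(s,j)$; this contradicts the minimality of $b$. Thus the problematic sign flip cannot occur, every other case moves the sign only in the allowed direction, and we conclude $\RRR^{-}=\varnothing$, which completes the equality.
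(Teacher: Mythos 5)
Your proof is correct, but it takes a genuinely different route from the paper's. The paper argues by induction on the size of $\mathcal M$: it peels off the increment at the lowest box of $\mathcal M$ in column $j$ and tracks directly how the set $\RRR$ grows, with a case split on whether that increment is pure or a trade that swaps the entry of $(i,j)$ itself. You instead extract an exact identity from the Lehmer form: writing $\Lambda(U)(i,j)=U(i,j)-1-\delta(U)$ (valid by column-injectivity) and applying \cref{obs:incrementing_adding} yields
\[
T(i,j)-T'(i,j)=\lvert\RRR\rvert-\lvert\RRR^{-}\rvert-m,
\]
where $\RRR^{-}$ is your ``reverse'' set and $m$ is the multiplicity of $(i,j)$ in $\mathcal M$. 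This is sharper than the statement of the lemma --- it quantifies the defect in the inequality as $\lvert\RRR^{-}\rvert+m$ --- and it makes the first assertion immediate with no induction. The cost is that the equality case still requires a one-increment-at-a-time analysis: your sign-monotonicity argument that $\RRR^{-}=\emptyset$ when $(i,j)\notin\mathcal M$ is comparable in length and spirit to the paper's induction, and it hinges on the same structural fact about trade increments that the paper exploits. Your resolution of the one dangerous case (a trade at a box $(s,j)$ with $s<r<i$ dragging $U(r,j)$ from above $U(i,j)$ to below it) is correct: the value $U(i,j)$ would then be an integer strictly between the traded values $a$ and $b$ that, by column-injectivity, does not appear below $(s,j)$, contradicting the minimality of $b$. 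In short, both proofs rest on the same mechanism, but yours packages the counting more transparently and proves a slightly stronger identity along the way.
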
 
\begin{proof} 
Because incrementing a box only changes the entries of boxes in the same column, we may assume without loss of generality that all boxes in $\mathcal M$ are in column $j$. Let $b=T(i,j)$ and $b'=T'(i,j)$. If $\mathcal M$ is empty, then the proof is trivial because $b=b'$. Therefore, we may assume $\mathcal M$ is nonempty and proceed by induction on the size of $\mathcal M$ (i.e., the sum of the multiplicities of elements of $\mathcal M$). If $b<b'$, then the inequality $|\RRR|\geq b-b'$ holds trivially, and the hypothesis of the last statement is not satisfied because $(i,j)$ must be in $\mathcal M$. If $b=b'$, then the inequality $|\RRR|\geq b-b'$ holds trivially, and the last statement follows readily from the definition of an increment. Thus, we may assume $b>b'$. 

Let $i'$ be the smallest integer such that $(i',j)\in\mathcal M$. Each time we increment a box, we do not change the entries lying below that box, and we do not decrease the entry in the box that we increment. Therefore, since $b>b'$, we must have $i'<i$. Let $\mathcal M'$ be the multiset obtained from $\mathcal M$ by decreasing the multiplicity of $(i',j)$ by $1$. Let $T^*=\uparrow_{(i',j)}\!T$, and note that $T'=\uparrow_{\mathcal M'}\!T^*$. Let $b^*=T^*(i,j)$. Let $\RRR^*$ be the set of positive integers $r^*$ satisfying the inequalities $r^*<i$, $T^*(r^*,j)<b^*$, and $T'(r^*,j)>b'$. By induction, we know that $|\RRR^*|\geq b^*-b'$, and we also know that $|\RRR^*|=b^*-b'$ if $(i,j)$ is not in $\mathcal M'$. 

We obtain $T^*$ from $T$ by performing either a pure increment or a trade increment to box $(i',j)$. Regardless of which type of increment is performed, we have $b^*\leq b$, and every box below $(i,j)$ that has an entry less than $b$ in $T^*$ also has an entry less than $b$ in $T$. This implies that $\RRR^*\subseteq\RRR$. We now consider two cases. 

\medskip 

\noindent {\bf Case 1.} Suppose $b^*=b$. 
In this case, $|\RRR|\geq b^*-b'=b-b'$ since $\RRR^*\subseteq\RRR$. To prove the second statement, assume $(i,j)$ is not in $\mathcal M$. Then $(i,j)$ is not in $\mathcal M'$, so $|\RRR^*|=b^*-b'=b-b'$. Because $b=b^*$, it follows from the definition of an increment that every box below $(i,j)$ that has an entry less than $b$ in $T$ also has an entry less than $b$ in $T^*$. This shows that $\RRR=\RRR^*$, so $|\RRR|=b-b'$. 

\medskip 

\noindent {\bf Case 2.} Suppose $b^*<b$. This implies that $T^*$ is obtained from $T$ by performing a trade increment that swaps the entries in boxes $(i,j)$ and $(i',j)$. Hence, $T(i',j)=T^*(i,j)=b^*$. By the definition of a trade increment, all of the integers in the interval $[b^*+1,b-1]$ appear below the box $(i',j)$ in $T$. The boxes below $(i',j)$ are unaffected when we increment boxes in $\mathcal M$, so each integer in $[b^*+1,b-1]$ occupies the same box in $T$, $T^*$, and $T'$. Because $T(i,j)\neq T'(i,j)=b'$, this implies that ${b'\not\in[b^*+1,b-1]}$, so $b'\leq b^*$. This forces every integer $r$ such that $T(r,j)\in[b^*+1,b-1]$ to be in $\RRR\setminus\RRR^*$. Since none of the boxes in $\mathcal M'$ are below $(i',j)$, we know that ${T'(i',j)\geq T^*(i',j)=b>b'}$. Because $T(i',j)=b^*<b$, it follows that $i'\in\RRR$. We also know that $i'\not\in\RRR^*$ because ${T^*(i',j)=b>b^*}$. This shows that all of the integers $r$ such that $T(r,j)\in[b^*,b-1]$ are in $\RRR\setminus\RRR^*$. On the other hand, every integer $r\in \RRR\setminus\RRR^*$ must satisfy $T(r,j)\in[b^*,b-1]$ (by the definitions of $\RRR$ and $\RRR^*$). We have seen that $\RRR^*\subseteq\RRR$, so $|\RRR|= b-b^*+|\RRR^*|\geq (b-b^*)+(b^*-b')=b-b'$. Moreover, if $(i,j)$ is not in $\mathcal M$, then it is also not in $\mathcal M'$, so $|\RRR|= b-b^*+|\RRR^*|= (b-b^*)+(b^*-b')=b-b'$.  
\end{proof} 

\begin{lemma}\label{lem:5.1}
Let $T,T'\in\CIT(w)$ be column-injective tableaux for a permutation $w\in\SSS_n$.  Fix integers $1\leq i<j<k\leq n$, and assume that $T'(i,j)\leq T(i,k)<T(i,j)$. Assume that for every positive integer $r<i$, the $\Le$-shapes $\Le(r,i,j)$ and $\Le(r,i,k)$ are balanced in $T$ and $T'$. Suppose $T'=\uparrow_{\mathcal M}\!T$, where $\mathcal M$ is a multiset of boxes that does not contain $(i,k)$. Then $T'(i,j)>T'(i,k)$. 
\end{lemma}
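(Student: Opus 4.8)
The plan is to argue by contradiction: suppose instead that $T'(i,j)\leq T'(i,k)$. Write $a=T(i,j)$, $a'=T'(i,j)$, $c=T(i,k)$, and $c'=T'(i,k)$, so the hypothesis reads $a'\leq c<a$ and the (false) assumption reads $a'\leq c'$. Since $a>c\geq 0$, the box $(i,j)$ lies in $\ID(w)$, so condition~(1) of a column-injective tableau forces $a'>0$; as $c\geq a'>0$ we have $(i,k)\in\ID(w)$ and hence $c'>0$ as well. First I would apply \cref{lem:Colinjective} to the box $(i,k)$, which is not in $\mathcal M$, to obtain the set $\RRR_k=\{r<i:T(r,k)<c,\ T'(r,k)>c'\}$ with $|\RRR_k|=c-c'$ (in particular $c'\leq c$). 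Applying the same lemma to $(i,j)$ gives $\RRR_j=\{r<i:T(r,j)<a,\ T'(r,j)>a'\}$ with $|\RRR_j|\geq a-a'$.

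The heart of the argument is to compare these two crossing sets through the shared column $i$, using the balanced $\Le$-shapes. Writing $x_r=T(r,i)$ and $x'_r=T'(r,i)$, I would first observe that every $r\in\RRR_j$ satisfies $x_r<a$ and $x'_r>a'$: since $\Le(r,i,j)$ is balanced, $T(r,j)<a$ forces $x_r<a$, and $T'(r,j)>a'$ forces $x'_r>a'$. I then consider $W=\RRR_j\setminus\RRR_k$, whose cardinality satisfies $|W|\geq|\RRR_j|-|\RRR_k|\geq (a-a')-(c-c')=(a-c)+(c'-a')$. For each $r\in W$ we have $r\notin\RRR_k$, i.e.\ $T(r,k)\geq c$ or $T'(r,k)\leq c'$. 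Balancedness of $\Le(r,i,k)$ in $T$ and $T'$, together with column-injectivity, pins this down: if $T(r,k)=c$ or $T'(r,k)=c'$ we immediately get a repeated nonzero entry ($c$ in column $k$ of $T$, or $c'$ in column $k$ of $T'$), contradicting column-injectivity; otherwise $T(r,k)>c$ forces $c<x_r<a$, while $T'(r,k)<c'$ forces $a'<x'_r<c'$.

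Thus, barring an immediate contradiction, every $r\in W$ lies in $W_1=\{r\in W:c<x_r<a\}$ or $W_2=\{r\in W:a'<x'_r<c'\}$. Because column $i$ of $T$ has distinct nonzero entries, the values $x_r$ for $r\in W_1$ are distinct elements of $\{c+1,\dots,a-1\}$, so $|W_1|\leq a-c-1$; similarly $|W_2|\leq\max(c'-a'-1,0)$ by column-injectivity of $T'$. Combining, $|W|\leq (a-c-1)+\max(c'-a'-1,0)$, which is strictly smaller than the lower bound $(a-c)+(c'-a')$ established above (the borderline case $c'=a'$ is handled separately: there $W_2=\varnothing$, so $|W|\leq a-c-1<a-c$). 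This contradiction shows $T'(i,j)>T'(i,k)$.

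I expect the main obstacle to be the middle step: funneling the ``crossing'' information from the two different columns $j$ and $k$ into comparable constraints on the single column $i$, and arranging the balanced-shape case analysis so that every stray row of $W$ either produces an immediate injectivity violation or contributes to one of the two bounded value-ranges. Making the final counting inequality strict (rather than a mere equality) is what drives the contradiction, and it relies on column-injectivity promoting the endpoint inequalities $x_r<a$ and $x'_r>a'$ to strict ones, which is exactly where the saving $-1$'s come from.
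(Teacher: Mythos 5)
Your proof is correct and follows essentially the same route as the paper's: both apply \cref{lem:Colinjective} to the boxes $(i,j)$ and $(i,k)$, transfer the resulting inequalities into column $i$ via the balanced $\Le$-shapes $\Le(r,i,j)$ and $\Le(r,i,k)$, and reach a counting contradiction from column-injectivity. The only difference is bookkeeping: you bound $|\RRR_j\setminus\RRR_k|$ from above and below, while the paper equivalently shows (via inclusion–exclusion) that a subset of $\RRR_j$ of size at least $T(i,k)-T'(i,k)+2$ must sit inside the set $\RRR_k$ of size exactly $T(i,k)-T'(i,k)$.
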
 
\begin{proof}
Let $a=T(i,k)$, $b=T(i,j)$, $a'=T'(i,k)$, and $b'=T'(i,j)$, and assume that $b'\leq a<b$. Because $(i,k)$ is not in $\mathcal M$, we have $a'\leq a$. Suppose by way of contradiction that $b'\leq a'\leq a$. Let $\RRR$ be the set of positive integers $r$ satisfying the inequalities $r<i$, $T(r,j)<b$, and $T'(r,j)>b'$. It follows from \cref{lem:Colinjective} that $|\RRR|\geq b-b'$. 

Let $\QQQ=\{r\in\RRR:T(r,j)\leq a\}$ and $\QQQ'=\{r\in\RRR:T'(r,i)\geq a'\}$. Because the entries in column $j$ of $T$ are distinct, we have $|\RRR\setminus\QQQ|\leq b-a-1$, so $|\QQQ|\geq (b-b')-(b-a-1)=a-b'+1$. For each $r\in\RRR$, the $\Le$-shape $\Le(r,i,j)$ is balanced in $T'$, so $T'(r,i)\geq T'(r,j)>T'(i,j)=b'$. Because the entries in column $i$ of $T'$ are distinct, this implies that $|\RRR\setminus\QQQ'|\leq a'-b'-1$, so we have ${|\QQQ'|\geq (b-b')-(a'-b'-1)=b-a'+1}$. Consequently, 
\begin{align}
\nonumber |\QQQ\cap\QQQ'|&=|\QQQ|+|\QQQ'|-|\QQQ\cup\QQQ'| \\ \nonumber &\geq (a-b'+1)+(b-a'+1)-(b-b') \\ 
\label{eq:QcapQ'} &=a-a'+2.
\end{align} 

Let $\RRR'$ be the set of positive integers $r$ satisfying the inequalities $r<i$, $T(r,k)<a$, and $T'(r,k)>a'$. Because $(i,k)$ is not in $\mathcal M$, it follows from \cref{lem:Colinjective} that $|\RRR'|=a-a'$. We will prove that $\QQQ\cap\QQQ'\subseteq\RRR'$, which will contradict \eqref{eq:QcapQ'}. 

Suppose $r\in\QQQ\cap \QQQ'$. Because $r\in \QQQ$, we have $T(r,j)\leq a<b=T(i,j)$.  Since $\Le(r,i,j)$ is balanced in $T$, we have $T(r,i)\leq T(r,j)<T(i,j)$. Hence, $T(r,i)\leq a$. Because $\Le(r,j,k)$ is also balanced in $T$, we must have $T(r,i)\leq T(r,k)\leq T(i,k)=a$. In fact, we have $T(r,k)<a$ because the entries in column $k$ of $T$ are distinct. Because $r\in \QQQ'$, we have $T'(r,i)\geq a'$.  Since $\Le(r,i,k)$ is balanced in $T$, we must have $T'(r,i)\geq T'(r,k)\geq T'(i,k)=a'$. In fact, we have $T'(r,k)>a'$ because the entries in column $k$ of $T'$ are distinct. We have shown that $T(r,k)<a$ and $T'(r,k)>a'$, so $r\in\RRR'$.   
\end{proof} 

\begin{lemma}\label{lem:less_hook} 
Let $T\in\CIT(w)$ be a column-injective tableaux for a permutation $w\in\SSS_n$. Suppose $T'\in\IT(w)$ is an inversions tableau for $w$ such that $T'=\uparrow_{\mathcal M}\!T$ for some multiset $\mathcal M$ of boxes. Fix integers $1\leq i<k\leq n$ such that $T(i,k)$ is strictly less than the median of the entries in $\hook(i,k)$ in $T$. Assume that for all $r,j$ satisfying $1\leq r<i<j\leq n$, the $\Le$-shape $\Le(r,i,j)$ is balanced in $T$. Then $(i,k)\in\mathcal M$. 
\end{lemma}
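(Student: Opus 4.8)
The plan is to argue by contradiction, reducing the statement to a counting inequality on the entries of $\hook(i,k)$ and then transporting ``large'' entries from $T$ to $T'$. Suppose, toward a contradiction, that $(i,k)\notin\mathcal M$. Applying the last statement of \cref{lem:Colinjective} to the box $(i,k)$ gives $T'(i,k)\leq T(i,k)$; write $a=T(i,k)$ and $a'=T'(i,k)$, so $a'\leq a$. Since $T'$ is an inversions tableau, it is balanced, so $\hook(i,k)$ is balanced in $T'$ and $a'$ is the median of its entries in $T'$. As $\hook(i,k)$ has $2(k-i)-1$ boxes and the corner contributes the median value $a'$, at most $k-i-1$ of the $2(k-i)-2$ non-corner boxes of $\hook(i,k)$ have $T'$-value exceeding $a'$. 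On the other hand, the hypothesis that $a$ is strictly less than the median of $\hook(i,k)$ in $T$ forces at least $k-i$ of the non-corner boxes to have $T$-value exceeding $a$. It therefore suffices to show that the number of non-corner boxes of $\hook(i,k)$ whose $T'$-value exceeds $a'$ is at least $k-i$; this contradicts the median bound above. I separate the non-corner boxes into the \emph{horizontal arm} $\{(i,j):i<j<k\}$ (lying in row $i$) and the \emph{vertical arm} $\{(i',k):i<i'<k\}$ (lying in column $k$).

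For the horizontal arm I can transport largeness box by box using \cref{lem:5.1}. Fix $j$ with $i<j<k$ and suppose $T(i,j)>a$. If $T'(i,j)>a$, then $T'(i,j)>a\geq a'$ and we are done. Otherwise $T'(i,j)\leq a<T(i,j)$, which is exactly the hypothesis $T'(i,j)\leq T(i,k)<T(i,j)$ of \cref{lem:5.1}. The remaining hypotheses hold in our setting: the $\Le$-shapes $\Le(r,i,j)$ and $\Le(r,i,k)$ are balanced in $T$ for every $r<i$ by assumption, they are balanced in $T'$ because $T'$ is balanced, and $(i,k)\notin\mathcal M$. Hence \cref{lem:5.1} gives $T'(i,j)>T'(i,k)=a'$. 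In either case $T'(i,j)>a'$, so every horizontal-arm box that is large (value $>a$) in $T$ remains large (value $>a'$) in $T'$.

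The vertical arm is the crux, and here a box-by-box argument fails, since a trade increment can move a large value out of a particular vertical-arm box. Instead I would argue at the level of counts within column $k$, tracking how the entries above the corner interact with those below it as the corner descends from $a$ to $a'$. The single-column dynamics of incrementing tend to keep genuinely large values trapped above the corner, because the corner's value blocks a box below it from grabbing a value exceeding the current corner value; this suggests that the number of vertical-arm boxes exceeding $a'$ in $T'$ is at least the number exceeding $a$ in $T$. The main obstacle is that, as the corner drops, up to $a-a'$ boxes below the corner rise above $a'$ (these are precisely the rows counted by the set $\RRR$ of \cref{lem:Colinjective}, of which there are exactly $a-a'$), and a direct column count loses exactly this quantity $a-a'$. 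To recover it, I would run an inclusion--exclusion argument in the spirit of the proof of \cref{lem:5.1}: for each such crossing row $r$, balance of $\Le(r,i,k)$ in $T'$ forces $T'(r,i)>a'$, and combining the balance of the shapes $\Le(r,i,j)$ and $\Le(r,i,k)$ in both $T$ and $T'$ with the injectivity of columns $i$ and $k$ should exhibit, for each unit of the deficit, a compensating large entry elsewhere in $\hook(i,k)$. Carrying out this bookkeeping so that the lower bound $k-i$ is attained is the delicate part; once it is done, combining the horizontal and vertical counts yields at least $k-i$ non-corner boxes of $\hook(i,k)$ with $T'$-value exceeding $a'$, contradicting the median bound and proving $(i,k)\in\mathcal M$.
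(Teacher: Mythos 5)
Your overall framework coincides with the paper's: argue by contradiction, use balancedness of $\hook(i,k)$ in $T'$ to cap the number of non-corner hook boxes whose $T'$-entry exceeds $T'(i,k)$ at $k-i-1$, use the median hypothesis to produce at least $k-i$ non-corner boxes whose $T$-entry exceeds $T(i,k)$, and then transport largeness from $T$ to $T'$. Your treatment of the horizontal arm is correct and is essentially the paper's own use of \cref{lem:5.1} (the paper runs the same implication contrapositively: it first deduces that some row-$i$ box must have lost its largeness and then lets \cref{lem:5.1} deliver the contradiction).

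The genuine gap is the vertical arm, which you explicitly leave unfinished (``carrying out this bookkeeping \dots is the delicate part''). Moreover, the route you sketch for it is a detour: the rows of the set $\RRR$ lie \emph{below} row $i$ and hence outside $\hook(i,k)$ altogether, so there is no deficit of $a-a'$ to recover and no inclusion--exclusion over $\Le$-shapes is needed; that worry only arises if one tries to compute the above-corner count as a difference of two whole-column counts. The direct statement --- and the single observation with which the paper replaces your entire vertical-arm discussion --- is that the quantity $\boldsymbol{a}(\tau)=\#\{i': i<i'<k,\ \tau(i',k)>\tau(i,k)\}$ cannot decrease when one increments any box other than $(i,k)$. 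This is a short case check on the definition of an increment: an increment at a box in another column changes nothing; a pure or trade increment at $(r,k)$ with $r>i$ only raises or permutes entries strictly above the corner; a trade increment at $(r,k)$ with $r<i$ whose donor box lies strictly above the corner replaces the donor's entry $b$ by $a=\tau(r,k)$, and these two values lie on the same side of the corner value (if $a$ is less than the corner value, then the corner value, which does not appear below $(r,k)$, forces $b$ to be at most the corner value as well --- this is exactly the ``blocking'' mechanism you correctly intuited); and a trade in which $(i,k)$ itself is the donor leaves the above-corner entries alone while only lowering the threshold. Granting this monotonicity, $\boldsymbol{a}(T')\geq\boldsymbol{a}(T)$, which combined with your horizontal-arm bound yields the contradiction. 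As written, your proof is incomplete precisely at this crux, and the missing ingredient is a one-line lemma you gestured at but did not prove.
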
 
\begin{proof} 
Given a tableau $\tau\in\CIT(w)$, let $\boldsymbol{a}(\tau)$ be the number of entries in $\tau$ that appear above box $(i,k)$ and are greater than $\tau(i,k)$. Similarly, let $\boldsymbol{\ell}(\tau)$ be the number of entries in $\tau$ that appear to the left of box $(i,k)$ and are greater than $\tau(i,k)$. It follows from the definition of an increment that $\boldsymbol{a}(\uparrow_{(r,s)}\!\tau)=\boldsymbol{a}(\tau)$ for every box $(r,s)\neq(i,k)$. Because $T(i,k)$ is strictly less than the median of the entries in $\hook(i,k)$ in $T$, we have $\boldsymbol{a}(T)+\boldsymbol{\ell}(T)>k-i-1$ (note that $|\hook(i,k)|=2(k-i)-1$). 

Suppose by way of contradiction that $(i,k)$ is not in $\mathcal M$. 
Then $\boldsymbol{a}(T')=\boldsymbol{a}(T)$. Because $T'$ is an inversions tableau, $\hook(i,k)$ is balanced in $T'$. Consequently, ${\boldsymbol{a}(T')+\boldsymbol{\ell}(T')\leq k-i-1}$. This shows that $\boldsymbol{\ell}(T')<\boldsymbol{\ell}(T)$, so there exists an integer $j$ satisfying $i<j<k$ such that ${T(i,j)>T(i,k)}$ and $T'(i,j)\leq T'(i,k)$. Because $(i,k)$ is not in $\mathcal M$, we have $T'(i,k)\leq T(i,k)$. Therefore, ${T'(i,j)\leq T(i,k)<T(i,j)}$. Invoking \cref{lem:5.1}, we find that $T'(i,j)>T'(i,k)$, which is a contradiction. 
\end{proof} 

\begin{lemma}\label{lem:bigger_after} 
Let $T,T'\in\IT(w)$ be inversions tableaux for a permutation $w\in\SSS_n$ that agree in all columns other than column $n$. Suppose also that $T'=\uparrow_{\mathcal M}\!T$ for some multiset $\mathcal M$ of boxes in column $n$. Then $T(i,j)\leq T'(i,j)$ for all $(i,j)\in\LRT_{n-1}$. 
\end{lemma}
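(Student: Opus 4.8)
The first thing I would observe is that because $T$ and $T'$ agree in every column other than column $n$, the inequality $T(i,j)\le T'(i,j)$ is automatic (indeed an equality) for every box with $j\ne n$. So the whole content of the lemma is the claim that $T(i,n)\le T'(i,n)$ for all $i$ with $(i,n)\in\LRT_{n-1}$, and my plan is to prove this by contradiction. Suppose some column-$n$ entry strictly decreases, and let $i$ be the \emph{bottommost} such row, i.e.\ the smallest index with $T(i,n)>T'(i,n)$; write $v=T(i,n)$ and $v'=T'(i,n)$, so $v>v'$. By minimality of $i$, we have $T(r,n)\le T'(r,n)$ for every $r<i$.

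Next I would harvest many ``crossing rows'' from \cref{lem:Colinjective}. Applying it to column $n$ and the box $(i,n)$ (legitimate since $T'=\uparrow_{\mathcal M}\!T$ and both $T,T'$ lie in $\CIT(w)$), the set $\RRR=\{r<i: T(r,n)<v,\ T'(r,n)>v'\}$ satisfies $|\RRR|\ge T(i,n)-T'(i,n)=v-v'$. The heart of the argument is then to show that every $r\in\RRR$ is forced to satisfy $T(r,n)=T'(r,n)$, with this common value lying strictly between $v'$ and $v$. This is where balancedness of both tableaux enters, through the $\Le$-shape $\Le(r,i,n)=\{(r,i),(r,n),(i,n)\}$; note that $(r,i)$ sits in column $i<n$, so $T(r,i)=T'(r,i)$. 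Balancedness of $\Le(r,i,n)$ in $T$, together with $T(r,n)<v=T(i,n)$, forces $T(r,i)\le T(r,n)$ (otherwise $\min\{T(r,i),v\}=v>T(r,n)$); balancedness in $T'$, together with $T'(r,n)>v'=T'(i,n)$, forces $T(r,i)\ge T'(r,n)$ (otherwise $\max\{T(r,i),v'\}=v'<T'(r,n)$). Chaining these gives $T'(r,n)\le T(r,i)\le T(r,n)$, hence $T'(r,n)\le T(r,n)$; combined with $T(r,n)\le T'(r,n)$ from the minimality of $i$, we conclude $T(r,n)=T'(r,n)$, a value lying in $\{v'+1,\dots,v-1\}$.

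Finally, I would invoke column-injectivity to finish by a pigeonhole count: the values $T(r,n)$ for $r\in\RRR$ are positive (they exceed $v'\ge 0$) and hence pairwise distinct, so $r\mapsto T(r,n)$ injects $\RRR$ into $\{v'+1,\dots,v-1\}$, a set of size $v-v'-1$. This contradicts $|\RRR|\ge v-v'$, completing the proof.

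I expect the middle step to be the genuine obstacle. A priori there is no monotonicity at the level of individual increments, since a trade increment can move a value \emph{downward} within column $n$; thus one cannot simply track entries through the sequence of increments. The resolution is to use the global balancedness of both endpoints: \cref{lem:Colinjective} guarantees at least $v-v'$ crossing rows, and each such row, pinned down by the two $\Le(r,i,n)$ balance conditions, must carry a distinct intermediate value—one more value than the interval $(v',v)$ can accommodate. Choosing $i$ to be the \emph{bottommost} violation (rather than an arbitrary one) is what makes the inequalities $T(r,n)\le T'(r,n)$ available for all $r<i$ and lets the two halves of the argument meet.
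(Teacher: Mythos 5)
Your proof is correct and uses the same key ingredients as the paper's: \cref{lem:Colinjective} applied to the box $(i,n)$, the balance of $\Le(r,i,n)$ in both $T$ and $T'$, the agreement of the two tableaux in column $i$, and column-injectivity. The only (cosmetic) difference is in the endgame: the paper skips the ``bottommost violation'' choice and instead notes that the $\geq v-v'$ distinct values $T'(r,n)$, $r\in\RRR$, all exceed $v'$, so some $r^*$ has $T'(r^*,n)\geq v=T(i,n)$, whence $T(i,n)>T(r^*,i)=T'(r^*,i)\geq T'(r^*,n)\geq T(i,n)$ is an immediate contradiction, rather than your pigeonhole over all of $\RRR$.
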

\begin{proof}
We know by hypothesis that $T(i,j)=T'(i,j)$ for all boxes $(i,j)$ with $j<n$. Now suppose by way of contradiction that there is a box $(i,n)$ in column $n$ such that $T(i,n)>T'(i,n)$. By \cref{lem:Colinjective}, the number of positive integers $r<i$ satisfying $T(r,n)<T(i,n)$ and $T'(r,n)>T'(i,n)$ is at least $T(i,n)-T'(i,n)$. Since $T'$ is column-injective, there must exist a positive integer $r^*<i$ such that $T(r^*,n)<T(i,n)$ and $T'(r^*,n)\geq T(i,n)>T'(i,n)$. Since $\Le(r^*,i,n)$ is balanced in both $T$ and $T'$, we must have $T(r^*,i)\leq T(r^*,n)<T(i,n)$ and $T'(r^*,i)\geq T'(r^*,n)>T'(i,n)$. This shows that 
$T(i,n)>T(r^*,i)=T'(r^*,i)\geq T'(r^*,n)\geq T(i,n)$, which is a contradiction. 
\end{proof}  

\section{Pipe Dream Transformations}\label{sec:transformations} 

In this section, we discuss two useful ways that one can manipulate pipe dreams. 

\subsection{Transposes} 
Let $P^\top$ denote the transpose of a reduced pipe dream $P$. For each $w\in\SSS_n$, the map $P\mapsto P^\top$ is a bijection from $\PD(w)$ to $\PD(w^{-1})$. In fact, this map is a poset anti-isomorphism, meaning that for all $P_1,P_2\in\PD(w)$, we have $P_1\leqC P_2$ if and only if $P_2^\top\leqC P_1^\top$. We will need \cref{prop:transpose}, which provides some information about how this map interacts with the correspondence between reduced pipe dreams and Lehmer tableaux. 

Given a permutation $w\in\SSS_n$, let $\widehat w\in\SSS_{n-1}$ be the permutation obtained from $w$ by deleting the number $n$. Let $P\in\PD(w)$. For each $m\in[n]$, let $\BB_m(P)$ be the set of boxes in row $m$ of $P$ that contain part of pipe $n$. Note that $\BB_n(P)$ consists of the unique box in row $n$, which is filled with an elbow tile. For $1\leq m\leq n-1$, because $P$ is reduced, $\BB_m(P)$ either consists of a single box filled with a cross tile or two adjacent boxes filled with bump or elbow tiles. Let us delete the rightmost box from $\BB_m(P)$ for each $m\in[n]$ and then shift each box to the right of a deleted box one space to the left; the result is a pipe dream $\widehat P$. The following lemma is immediate upon inspection (see also \cref{fig:delete}). 

\begin{lemma}\label{lem:delete} 
Let $P\in\PD(w)$ be a reduced pipe dream for a permutation $w\in\SSS_n$. Then $\widehat P$ is a reduced pipe dream for $\widehat w$. Moreover, $\Theta(\widehat P)$ is obtained from $\Theta(P)$ by deleting column $n$. 
\end{lemma}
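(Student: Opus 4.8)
The plan is to recognize $\widehat P$ as the pipe dream obtained by \emph{erasing pipe $n$} from $P$ and redrawing the remaining pipes on the smaller staircase $\ULT_{n-1}$, and then to read off what this erasure does to the permutation, to reducedness, and to the crossing rows recorded by $\Theta$. First I would describe pipe $n$. Starting at the unique box of row $n$, it must reach the north boundary, so it visits every row and $\BB_m(P) \neq \emptyset$ for each $m$; hence the deletion removes exactly one box from each of rows $1, \ldots, n$, and since these rows of $\ULT_n$ have lengths $n, n-1, \ldots, 1$, the result is supported on $\ULT_{n-1}$. Traversing pipe $n$ upward, at a row where $\BB_m(P)$ is a single cross tile it passes straight up and keeps its column, while at a row where $\BB_m(P)$ is an adjacent pair of elbow/bump tiles it enters the left box from below and exits the right box upward, increasing its column by one. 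Writing $d_m$ for the rightmost (equivalently, exit) column of $\BB_m(P)$, this gives $d_n = 1$ and $d_{m-1} - d_m \in \{0, 1\}$ for all $m$, so $d_1 \geq d_2 \geq \cdots \geq d_n = 1$ and the deleted boxes $(m, d_m)$ form a monotone lattice path.

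Next I would check that $\widehat P$ is a reduced pipe dream for $\widehat w$. The monotonicity of the deleted path is exactly what makes the per-row leftward shifts compatible, so that the surviving pipe segments glue into a legal filling of $\ULT_{n-1}$; since the boxes we delete are precisely where pipe $n$ lives, reading the north boundary of $\widehat P$ recovers the one-line notation of $w$ with the value $n$ removed, that is, $\widehat w$. For reducedness I would count crosses: a cross tile of $P$ either avoids pipe $n$ and survives the deletion (shifted horizontally), or contains pipe $n$, in which case it is a single-box $\BB_m(P)$ and is deleted. Thus the cross tiles of $\widehat P$ are exactly the cross tiles of $P$ not involving pipe $n$, and there are $\ell(\widehat w)$ of them, equal to the number of inversions of $\widehat w$. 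Because a pipe dream for a permutation is reduced if and only if its number of cross tiles equals the number of inversions of that permutation, $\widehat P$ is reduced.

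The statement about $\Theta$ then follows because the shifts are purely horizontal. A crossing of pipes $i$ and $j$ with $i < j < n$ sits in a cross tile that avoids pipe $n$ and is therefore never deleted; it survives at the same row $r = \row_P(i,j)$, only its column changing. Granting (by the inspection below) that this tile still hosts the crossing of $i$ and $j$ in $\widehat P$, we get $\row_{\widehat P}(i,j) = \row_P(i,j)$, so $\Theta(\widehat P)$ and $\Theta(P)$ agree on $\LRT_{n-2}$. The entries of column $n$ of $\Theta(P)$ record exactly the crossings of pipe $n$ with the other pipes, which are the ones erased, so $\Theta(\widehat P)$ is obtained from $\Theta(P)$ by deleting column $n$.

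The main obstacle is the tile-level verification underlying the middle step, i.e. making ``immediate upon inspection'' precise: that the deletion-and-shift really yields a legal pipe dream for $\widehat w$ in which each pair of pipes below $n$ still crosses at the same tile. The subtlety is that adjacent rows may delete boxes in different columns (when $d_{m-1} = d_m + 1$), so the shifts translate different rows by different amounts and ``heal'' the gap left by pipe $n$ by reconnecting segments that belonged to several different pipes, rather than translating each pipe rigidly; one must then check that every such local reconnection is a valid tile configuration and that boxes forced onto the new southeast boundary receive elbow tiles. The bound $d_{m-1} - d_m \in \{0, 1\}$ coming from the monotone deleted path is the tool that keeps these reconnections controlled.
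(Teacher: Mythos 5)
Your argument is correct; the paper itself offers no proof of this lemma beyond declaring it ``immediate upon inspection'' (with a pointer to its Figure~5), and your write-up — tracking the monotone exit columns $d_m$ of pipe $n$, noting $d_{m-1}-d_m\in\{0,1\}$, and checking that the per-row deletions and shifts reroute the remaining pipes consistently — is precisely the inspection the authors have in mind, carried out carefully. The one residual point you already flag (a bump tile landing on the new southeast boundary must be read as an elbow, its dead south-to-east strand discarded) is cosmetic and does not affect the conclusion.
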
 

\begin{figure}[ht]
  \begin{center}
  \includegraphics[height=4.66cm]{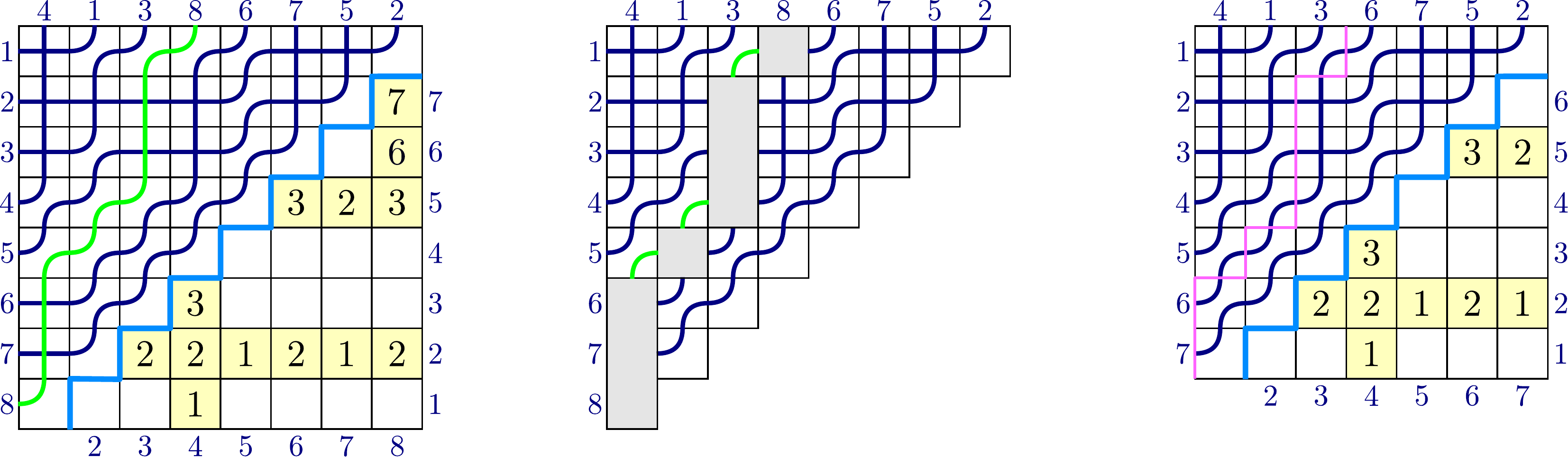}
  \end{center}
\caption{On the left is a pipe dream $P\in\PD(41386752)$ together with its corresponding inversions tableau $\Theta(P)$. In the middle is the result of deleting the rightmost box in $\BB_m(P)$ for each row $m$. On the right is the pipe dream $\widehat P\in\PD(4136752)$.}\label{fig:delete} 
\end{figure}

Given an inversion $(i,j)$ of a permutation $w$ and a reduced pipe dream $P\in\PD(w)$, recall that we write $\row_P(i,j)$ for the index of the row where pipes $i$ and $j$ cross in $P$. Let us also write $\col_P(i,j)$ for the index of the column where pipes $i$ and $j$ cross in $P$ (columns are numbered $1,\ldots,n$ from left to right). It is immediate from the relevant definitions that $\Phi(P)(i,j)=\row_P(i,j)-|A_{i,j}(P)|-1$, where we define $A_{i,j}(P)$ to be the set of pipes indexed by integers less than $i$ that cross pipe $j$ above row $\row_P(i,j)$. Similarly, we have 
\begin{equation}\label{eq:easy}
\Phi(P^\top)(w^{-1}(j),w^{-1}(i))=\col_P(i,j)-|D_{i,j}(P)|-1,
\end{equation}
where we define $D_{i,j}(P)$ to be the set of pipes indexed by elements of the set ${\{w(k):1\leq k<w^{-1}(j)\}}$ that cross pipe $i$ to the left of column $\col_P(i,j)$. The next proposition is illustrated in \cref{fig:transpose2}. 

\begin{figure}[htbp]
  \begin{center}
  \includegraphics[height=10.667cm]{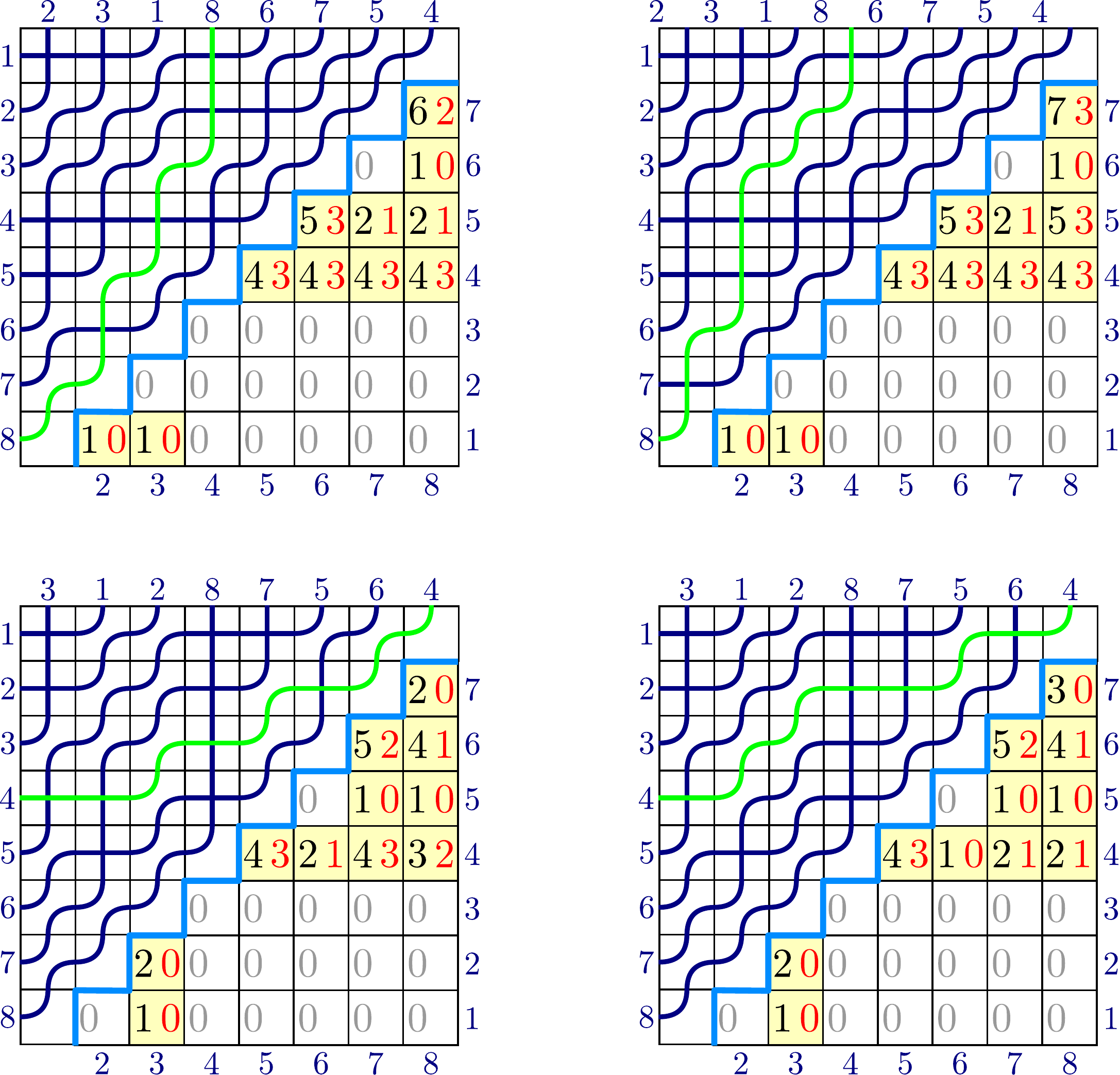}
  \end{center}
\caption{On the top are reduced pipe dreams $P_1$ (left) and $P_2$ (right) together with their inversions tableaux (black) and Lehmer tableaux ({\color{red}red}). We have ${\Phi(P_1)\leq\Phi(P_2)}$, and $\Phi(P_1)$ and $\Phi(P_2)$ disagree only in column $n$. On the bottom are the pipe dreams $P_1^\top$ (left) and $P_2^\top$ (right), also drawn with their inversions tableaux (black) and Lehmer tableaux ({\color{red}red}). We have $\Phi(P_2^\top)\leq\Phi(P_1^\top)$, and $\Phi(P_1^\top)$ and $\Phi(P_2^\top)$ disagree only in row $4$. }\label{fig:transpose2} 
\end{figure}

\begin{proposition}\label{prop:transpose} 
Let $w\in\SSS_n$, and let $P_1,P_2\in\PD(w)$ be such that $\Phi(P_1)\leq\Phi(P_2)$. Suppose that $\Phi(P_1)$ and $\Phi(P_2)$ agree in all columns other than column $n$. Then $\Phi(P_1^\top)$ and $\Phi(P_2^\top)$ agree in all rows other than row $w^{-1}(n)$, and $\Phi(P_2^\top)\leq\Phi(P_1^\top)$.  
\end{proposition}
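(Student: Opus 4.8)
The statement relates two comparable pipe dreams $P_1, P_2 \in \PD(w)$ whose Lehmer tableaux differ only in column $n$ to their transposes $P_1^\top, P_2^\top \in \PD(w^{-1})$, asserting that the transposed Lehmer tableaux differ only in row $w^{-1}(n)$ and that the comparison reverses. The plan is to unpack everything through the explicit formula \eqref{eq:easy}, namely $\Phi(P^\top)(w^{-1}(j),w^{-1}(i))=\col_P(i,j)-|D_{i,j}(P)|-1$, and to combine this with a transpose-equivariance observation: transposing a pipe dream swaps rows and columns, so $\col_P(i,j)=\row_{P^\top}(j,i)$ (after relabeling pipes via $w^{-1}$), and crossing data gets reflected accordingly. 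I would first make precise the relationship between the data $(\col_P, D_{i,j})$ for $P$ and the data $(\row_{P^\top}, A)$ for $P^\top$, so that formula \eqref{eq:easy} becomes a clean expression for entries of $\Phi(P^\top)$.

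\textbf{Identifying which entries change.}

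The key structural input is \cref{lem:delete}: deleting the largest value $n$ from $w$ corresponds to deleting column $n$ from the inversions tableau. Dually, under transposition, column $n$ of $\ID(w)$ --- the set of inversions $(i,n)$ --- corresponds to row $w^{-1}(n)$ of $\ID(w^{-1})$, since the inversion $(i,n)$ of $w$ maps to the box $(w^{-1}(n), w^{-1}(i))$ in $\LRT_{n-1}$ for $P^\top$, and all such boxes share the first coordinate $w^{-1}(n)$. So the hypothesis ``$\Phi(P_1)$ and $\Phi(P_2)$ agree outside column $n$'' translates under the transpose correspondence exactly to ``$\Phi(P_1^\top)$ and $\Phi(P_2^\top)$ agree outside row $w^{-1}(n)$,'' \emph{provided} I can show that the entries of $\Phi(P^\top)$ outside row $w^{-1}(n)$ depend only on the data of $P$ that lives outside column $n$. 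I would verify this by inspecting formula \eqref{eq:easy}: for a box $(w^{-1}(j),w^{-1}(i))$ with $j \neq n$, both $\col_P(i,j)$ and the set $D_{i,j}(P)$ are determined by the portions of the pipe arrangement not involving pipe $n$'s final column, which is unchanged between $P_1$ and $P_2$. This is where I expect to lean on \cref{lem:delete} and the fact that $P_1, P_2$ differ only in how pipe $n$ is routed.

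\textbf{Establishing the order reversal.}

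For the inequality $\Phi(P_2^\top) \leq \Phi(P_1^\top)$, the cleanest route is to invoke that $P \mapsto P^\top$ is a poset anti-isomorphism, stated in the text just before \cref{prop:transpose}. Combining the anti-isomorphism with \cref{cor:easy_direction} and its transpose, I would argue as follows: since $\Phi(P_1)\le\Phi(P_2)$ and the two differ only in column $n$, I must first upgrade this to the pipe-dream comparison $P_1\leqC P_2$. Here I expect the main obstacle to lie --- \cref{cor:easy_direction} only gives the easy direction $P\leqC P' \Rightarrow \Phi(P)\le\Phi(P')$, so to run it backwards I would use that $\Phi$ restricted to tableaux differing in a single column is governed by \cref{lem:bigger_after}, which guarantees the componentwise inequality is genuinely realized by incrementing a multiset of boxes in column $n$; \cref{prop:chute} then lets me realize this increment as a chute move (or sequence thereof) establishing $P_1\leqC P_2$. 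Applying the anti-isomorphism yields $P_2^\top\leqC P_1^\top$, and a final application of \cref{cor:easy_direction} gives $\Phi(P_2^\top)\le\Phi(P_1^\top)$, completing the proof. The delicate point throughout is keeping the index translations $(i,j)\mapsto(w^{-1}(j),w^{-1}(i))$ consistent and confirming that ``column $n$'' really does map to ``row $w^{-1}(n)$'' under these relabelings.
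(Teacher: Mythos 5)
Your plan for the first assertion (agreement outside row $w^{-1}(n)$) is essentially the paper's: reduce to $\widehat P_1=\widehat P_2$ via \cref{lem:delete} and then compare entries through \eqref{eq:easy}. One caveat: your claim that for $j\neq n$ ``both $\col_P(i,j)$ and the set $D_{i,j}(P)$ are determined by the portions of the pipe arrangement not involving pipe $n$'' is not literally true --- pipe $n$ may pass through row $\row_{P_\delta}(i,j)$ to the left of the crossing of pipes $i$ and $j$, which shifts $\col_{P_\delta}(i,j)$ by one; the point is that this shift is exactly compensated by whether $n\in D_{i,j}(P_\delta)$, which is the content of the two cases in the paper's verification of \eqref{eq:hat}. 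This is a repairable imprecision rather than a fatal flaw.

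The second part of your argument, however, has a genuine gap: it is circular. You propose to upgrade the hypothesis $\Phi(P_1)\leq\Phi(P_2)$ (with agreement outside column $n$) to $P_1\leqC P_2$, then apply the anti-isomorphism and \cref{cor:easy_direction}. But the implication ``componentwise inequality of Lehmer tableaux concentrated in column $n$ implies $\leqC$'' is precisely the hard direction of \cref{thm:isomorphism} in the single-column case, and in the paper it is established by \cref{lem:isomorphism3} --- whose proof begins by invoking \cref{prop:transpose} itself. Neither \cref{lem:bigger_after} nor \cref{prop:chute} bridges this gap: \cref{lem:bigger_after} only gives componentwise inequality of the \emph{inversions} tableaux, and \cref{prop:chute} characterizes a single chute move but does not tell you that some box $(\xx,n)$ of the multiset $\mathcal M$ can be incremented first so as to stay inside $\IT(w)$ (that nonemptiness of $\mathcal X$ is exactly what \cref{lem:isomorphism3} supplies, using the proposition you are trying to prove). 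The paper avoids this by proving the order reversal in row $w^{-1}(n)$ directly and combinatorially: it introduces the sets $C_\delta(i)$ of columns left of $\col_{P_\delta}(i,n)$ in which pipe $i$ does not cross a pipe of $D_{i,n}(P_\delta)$, notes $|C_\delta(i)|=\col_{P_\delta}(i,n)-|D_{i,n}(P_\delta)|-1$, and shows $C_2(i)\subseteq C_1(i)$ by using \cref{lem:bigger_after} (in the form $\row_{P_1}(k,n)\leq\row_{P_2}(k,n)$) to compare how far right pipe $n$ extends in each row of $P_1$ versus $P_2$, together with $\widehat P_1=\widehat P_2$. You would need to supply an argument of this kind (or some other non-circular one) to close the gap.
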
 

\begin{proof}
The inversions tableaux $\Theta(P_1)$ and $\Theta(P_2)$ agree in all columns other than column $n$. By \cref{lem:delete}, this implies that $\Theta(\widehat P_1)=\Theta(\widehat P_2)$, so $\widehat P_1=\widehat P_2$. 

Suppose $(w^{-1}(j),w^{-1}(i))$ is an inversion of $w^{-1}$ such that $j\neq n$. Then $(i,j)$ is an inversion of $w$. For ${\delta\in\{1,2\}}$, we know by \eqref{eq:easy} that 
\[\Phi(P_\delta^\top)(w^{-1}(j),w^{-1}(i))=\col_{P_\delta}(i,j)-|D_{i,j}(P_\delta)|-1\] and 
\[\Phi(\widehat P_\delta^\top)(\widehat w^{-1}(j),\widehat w^{-1}(i))=\col_{P_\delta}(i,j)-|D_{i,j}(\widehat P_\delta)|-1.\]
Note that $D_{i,j}(\widehat P_\delta)=D_{i,j}(P_\delta)\setminus\{n\}$. We claim that 
\begin{equation}\label{eq:hat}
\Phi(P_\delta^\top)(w^{-1}(j),w^{-1}(i))=\Phi(\widehat P_\delta^\top)(\widehat w^{-1}(j),\widehat w^{-1}(i)).
\end{equation} If $n\not\in D_{i,j}(P_\delta)$, then we can use the fact that pipes $j$ and $n$ cannot cross more than once in $P_\delta$ to find that pipe $n$ does not pass through row $\row_{P_\delta}(i,j)$ to the left of column $\col_{P_\delta}(i,j)$. In this case, we have $\col_{P_\delta}(i,j)=\col_{\widehat P_\delta}(i,j)$ and $D_{i,j}(P_\delta)=D_{i,j}(\widehat P_\delta)$, so \eqref{eq:hat} holds. 
On the other hand, if $n\in D_{i,j}(P_\delta)$, then pipe $n$ passes through row $\row_{P_\delta}(i,j)$ to the left of column $\col_{P_\delta}(i,j)$, so $\col_{P_\delta}(i,j)=\col_{\widehat P_\delta}(i,j)+1$. In this case, $D_{i,j}(P_\delta)=D_{i,j}(\widehat P_\delta)\sqcup\{n\}$, so \eqref{eq:hat} holds again. 

For $j\neq n$, we have proven \eqref{eq:hat} for each $\delta\in\{1,2\}$; since $\widehat P_1=\widehat P_2$, this proves that \[\Phi(P_1^\top)(w^{-1}(j),w^{-1}(i))=\Phi(P_2^\top)(w^{-1}(j),w^{-1}(i)).\] It follows that $\Phi(P_1^\top)$ and $\Phi(P_2^\top)$ agree in all rows other than row $w^{-1}(n)$. 

Now fix $i$ such that $(w^{-1}(n),w^{-1}(i))$ is an inversion of $w^{-1}$. For $\delta\in\{1,2\}$, let $C_\delta(i)$ be the set of columns in $P_\delta$ to the left of $\col_{P_\delta}(i,n)$ in which pipe $i$ does not cross with a pipe in $D_{i,n}(P_\delta)$. We have $|C_\delta(i)|=\col_{P_\delta}(i,n)-|D_{i,n}(P_\delta)|-1$. We will prove that $C_2(i)\subseteq C_1(i)$. By \eqref{eq:easy}, this will imply that \[\Phi(P_2^\top)(w^{-1}(n),w^{-1}(i))\leq\Phi(P_1^\top)(w^{-1}(n),w^{-1}(i));\] as $i$ was arbitrary, this will complete the proof that $\Phi(P_2^\top)\leq\Phi(P_1^\top)$. 

For $\delta\in\{1,2\}$ and $m\in[n]$, let \[f_\delta(m)=|\{k\in[n-1]:\row_{P_\delta}(k,n)\leq m\}|.\] For each $m\in[n]$, the rightmost box in $P_\delta$ that contains part of pipe $n$ is $(n-m-f_\delta(m),n)$. According to \cref{lem:bigger_after}, we have $\Theta(P_1)(k,n)\leq \Theta(P_2)(k,n)$ for all $k\in[n-1]$. Equivalently, ${\row_{P_1(k,n)}\leq\row_{P_2}(k,n)}$ for every $k\in[n-1]$. This implies that $f_1(m)\geq f_2(m)$. Therefore, the box in $\BB_m(P_1)$ that we delete when constructing $\widehat P_1$ from $P_1$ lies weakly to the right of the box in $\BB_m(P_2)$ that we delete when constructing $\widehat P_2$ from $P_2$. Because $\widehat P_1=\widehat P_2$, it follows that $C_2(i)\subseteq C_1(i)$.  
\end{proof} 

\subsection{The Triforce Embedding}\label{subsec:Triforce}
Given a permutation $w\in\SSS_n$, define $w^{\Tri}\in\SSS_{2n}$ by 
\[w^{\Tri}(i)=\begin{cases}   i & \text{if } 1\leq i\leq n; \\
    2n+1-w(2n+1-i) & \text{if } n+1\leq i\leq 2n.
    \end{cases}\] 
Given a pipe dream $P\in\PD(w)$, we obtain a pipe dream $P^{\Tri}\in\PD(w^{\Tri})$ as follows. For $i,j\in[n]$ with $i+j\leq n$, we fill box $(n+1-j,n+1-i)$ in $P^{\Tri}$ with the same type of tile that appears in box $(i,j)$ in $P$. As usual, the boxes on the southeast boundary of $\ULT_{2n}$ are filled with elbow tiles in $P^{\Tri}$. All other boxes are filled with bump tiles in $P^{\Tri}$. In other words, to construct $P^{\Tri}$ from $P$, we change the elbow tiles in $P$ into bump tiles, reflect across a line of slope $1$, and then add additional bump and elbow tiles as necessary. We illustrate this construction in \cref{fig:Triforce}. 

\begin{figure}[ht]
  \begin{center}
  \includegraphics[height=6.447cm]{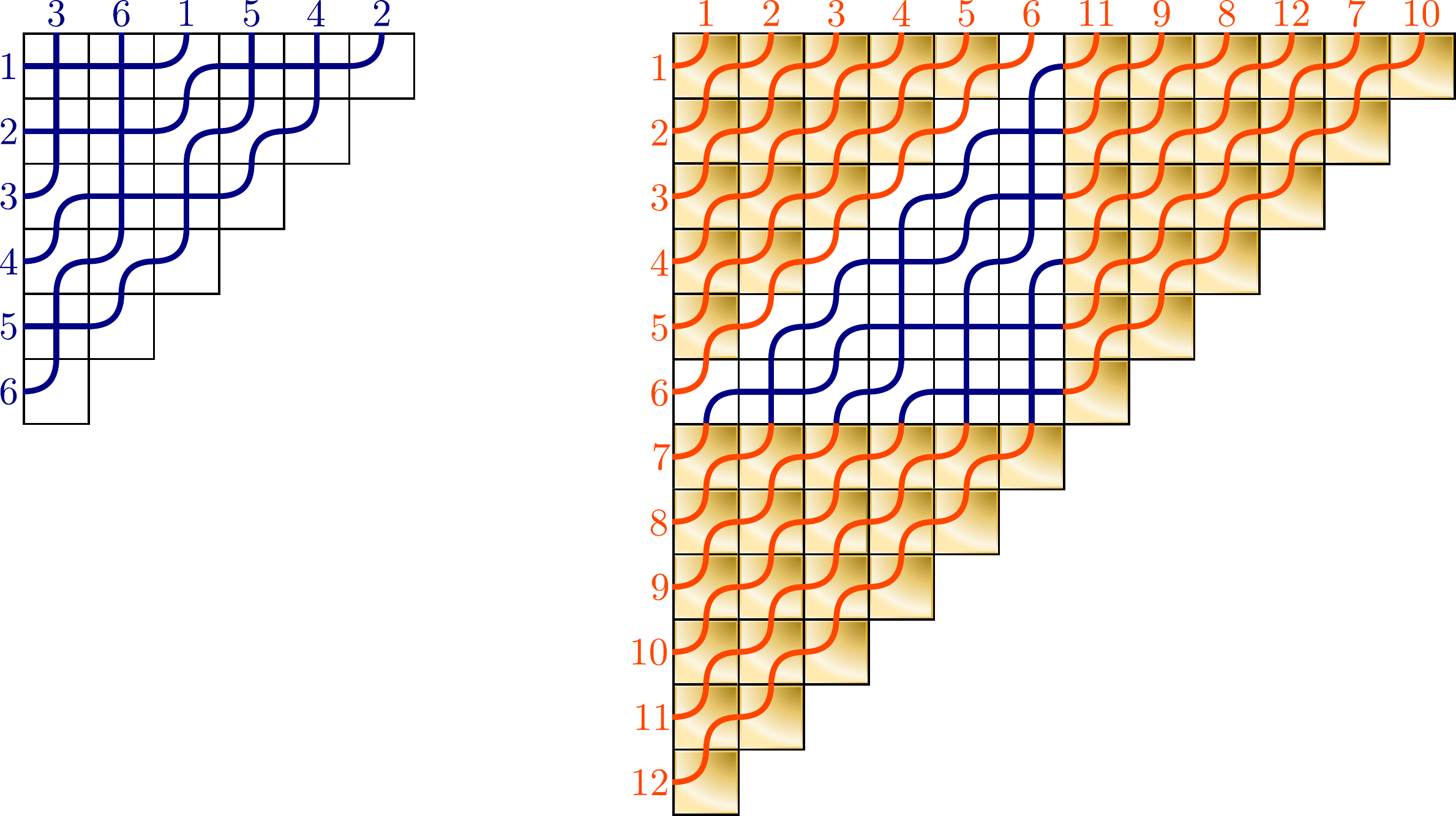}
  \end{center}
\caption{On the left is a reduced pipe dream $P$ for the permutation ${w=361542}$. On the right is the reduced pipe dream $P^{\Tri}$ for the permutation ${w^{\Tri}=1\,2\,3\,4\,5\,6\,11\,9\,8\,12\,7\,10}$. }\label{fig:Triforce} 
\end{figure}

\begin{proposition}\label{prop:Triforce}
Let $w\in\SSS_n$. The map $P\mapsto P^{\Tri}$ is a poset isomorphism from $\PD(w)$ to an interval in $\PD(w^{\Tri})$. 
\end{proposition}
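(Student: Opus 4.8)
The plan is to realize the triforce embedding $\varphi\colon P\mapsto P^{\Tri}$ as an order isomorphism onto the set $\mathcal I$ of reduced pipe dreams for $w^{\Tri}$ whose small-index pipes follow a fixed route, to show that $\mathcal I$ is order-convex with a minimum and a maximum, and then to invoke the elementary fact that an order-convex subset $X$ of a poset with a minimum $m$ and a maximum $M$ is the interval $[m,M]$ (order-convexity gives $[m,M]\subseteq X$, while $m\le x\le M$ for all $x\in X$ gives $X\subseteq[m,M]$).

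First I would pin down the image. Since $w^{\Tri}$ fixes $1,\dots,n$ and, as one checks directly from the definition of $w^{\Tri}$, every inversion of $w^{\Tri}$ is a pair $(i,j)$ with $n<i<j\le 2n$, the inversions diagram $\ID(w^{\Tri})$ is a reflected copy of $\ID(w)$ sitting in the rows and columns indexed by $\{n+1,\dots,2n\}$. I will call the boxes of $\ULT_{2n}$ that receive copied tiles the \emph{copy region} and its complement the \emph{trivial region}. By construction, the trivial region of $P^{\Tri}$ carries a fixed filling (all bumps away from the boundary, elbows on the boundary) independent of $P$, through which pipes $1,\dots,n$ run without ever crossing. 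Hence $\varphi$ is injective, because one recovers $P$ by reflecting the copy region back across the slope-$1$ line and turning bumps into elbows; and its image $\mathcal I$ is exactly the set of $Q\in\PD(w^{\Tri})$ whose trivial region carries this fixed filling, since any such $Q$ restricts on the copy region to a reduced pipe dream for $w$.

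Next I would establish the chute-move dictionary. A chute rectangle $R$ for $P$, with $\NW(R),\SW(R)$ bumps, $\SE(R)$ a bump or elbow, and interior crosses, reflects across the slope-$1$ line to a rectangle $R'$ in $\ULT_{2n}$ in which the roles of $\NW$ and $\SE$ are interchanged while $\SW$ and $\NE$ are preserved. The two boxes altered by the move, $\SW(R)$ and $\NE(R)$, satisfy $i+j\le n$ (being strictly northwest of $\SE(R)$) and hence lie in the copy region, and one checks tile by tile, using that elbows become bumps, that the chute-move conditions transfer to $R'$ and that the resulting move sends $P^{\Tri}$ to $(\C_{\xx,\yy}(P))^{\Tri}$. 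This shows $\varphi$ is monotone and carries cover relations to cover relations. Writing $\hat 0$ and $\hat 1$ for the minimum and maximum of $\PD(w)$, it follows that $Q_{\min}:=\varphi(\hat 0)$ and $Q_{\max}:=\varphi(\hat 1)$ are the minimum and maximum of $\mathcal I$, and that $\mathcal I\subseteq[Q_{\min},Q_{\max}]$.

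The hard part will be proving that $\mathcal I$ is order-convex, which then yields $\mathcal I=[Q_{\min},Q_{\max}]$. Because every element of an interval is reached from its bottom by a saturated chain lying inside the interval, it suffices to show: if $A\in\mathcal I$ and $A\lessdot Q\leqC Q_{\max}$, then $Q\in\mathcal I$; that is, every chute move out of an image element that stays below $Q_{\max}$ alters only copy-region boxes. The box $\NE(\tilde R)$ whose cross is removed by such a move already lies in the copy region, since the trivial region of $A$ contains no crosses, so the move relocates the crossing of some pipes $\xx,\yy$ with $n<\xx<\yy\le 2n$ to the box $\SW(\tilde R)$. I would argue that if $\SW(\tilde R)$ lay in the trivial region, then $\row_Q(\xx,\yy)$ would exceed the largest row in which $\xx$ and $\yy$ can cross inside the copy region; using the identity $\Phi(Q)(\xx,\yy)=\row_Q(\xx,\yy)-|A_{\xx,\yy}(Q)|-1$ together with \cref{cor:easy_direction}, this would force $\Phi(Q)(\xx,\yy)>\Phi(Q_{\max})(\xx,\yy)$ and hence $Q\not\leqC Q_{\max}$, a contradiction. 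Controlling the correction term $|A_{\xx,\yy}(Q)|$ sharply enough to make this comparison rigorous is the main obstacle, and I expect to handle it with the structural results of \cref{sec:lemmas}, in particular the balancedness of $\Le$-shapes and \cref{lem:bigger_after}. Once order-convexity is established, $\varphi$ is a bijection from $\PD(w)$ onto the interval $[Q_{\min},Q_{\max}]$ that both preserves and reflects cover relations, and since a finite poset is the transitive closure of its cover relations, $\varphi$ is the desired poset isomorphism onto an interval in $\PD(w^{\Tri})$.
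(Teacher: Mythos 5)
Your skeleton coincides with the paper's (the published proof is only three sentences long): chute moves commute with the embedding, the image is order-convex, and an order-convex set with a minimum and a maximum is an interval. The problem is that your treatment of order-convexity---which you correctly single out as the crux---is not closed. You reduce to showing that a chute move out of $A\in\mathcal I$ whose result $Q$ stays below $Q_{\max}$ cannot deposit a cross outside the copy region, and you propose to get a contradiction from $\Phi(Q)(\xx,\yy)>\Phi(Q_{\max})(\xx,\yy)$ via $\Phi(Q)(\xx,\yy)=\row_Q(\xx,\yy)-|A_{\xx,\yy}(Q)|-1$. But that inequality needs $|A_{\xx,\yy}(Q)|\leq|A_{\xx,\yy}(Q_{\max})|$, and the containment naturally runs the other way: in $Q$ the offending crossing sits in row $n+1$, below every other crossing, so $A_{\xx,\yy}(Q)$ is the \emph{entire} set of pipes $k<\xx$ that cross pipe $\yy$, whereas $A_{\xx,\yy}(Q_{\max})$ may be a proper subset of it. You acknowledge this as ``the main obstacle'' and only ``expect'' to resolve it; as written the argument is incomplete at its central step. (There is also an unexamined subcase: you assume the cross can only escape the copy region downward, i.e.\ into row $n+1$, whereas a priori it could escape leftward across the antidiagonal $r+c=n+1$; that case is in fact impossible because the cross at the box immediately right of $\SW(R)$ already lies in the copy region, but this needs to be said.)

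The gap closes much more cheaply, with no Lehmer tableaux at all. A chute move replaces the cross at $\NE(R)$ by a cross at $\SW(R)$: it moves exactly one cross to a strictly larger row and a strictly smaller column and touches nothing else. Hence along $\leqC$ the number of crosses in rows $>n$ is weakly increasing and the number of crosses in columns $>n$ is weakly decreasing. If $A\leqC Q\leqC B$ with $A,B\in\mathcal I$, comparing with $B$ shows $Q$ has no cross in any row $>n$, and comparing with $A$ shows $Q$ has no cross in any column $>n$. Finally, \emph{no} reduced pipe dream for $w^{\Tri}$ has a cross in a box $(r,c)$ with $r+c\leq n+1$: since $w^{\Tri}$ has no inversion involving an index $\leq n$, each pipe $i\leq n$ passes through no cross tile, so its route is the forced zigzag through the boxes with $r+c\in\{i,i+1\}$, and these zigzags cover that entire region. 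The three constraints together confine every cross of $Q$ to the copy region, whence $Q\in\mathcal I$. With this in hand your remaining steps (the tile-by-tile transfer of chute moves in both directions, and the identification of the order-convex image with the interval $[\varphi(\hat 0),\varphi(\hat 1)]$) go through as you describe.
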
 
\begin{proof}
For $P_1,P_2\in\PD(w)$, it is immediate from the definition of a chute move that $P_1\leqC P_2$ if and only if $P_1^{\Tri}\leqC P_2^{\Tri}$. It is also straightforward to see that $\{P^{\Tri}:P\in\PD(w)\}$ is an order-convex subset of $\PD(w^{\Tri})$. This set is in fact an interval because $\PD(w)$ has a minimum element and a maximum element (by \cite[Theorem~3.8]{Rubey}). 
\end{proof}

We call the map $P\mapsto P^{\Tri}$ the \dfn{triforce embedding} of $\PD(w)$ into $\PD(w^{\Tri})$.

\section{The Isomorphism}\label{sec:isomorphism}  

The goal of this section is to prove that the bijection $\Phi\colon\PD(w)\to\LT(w)$ is a poset isomorphism.

\subsection{Constructing Chute Moves} 

For the following proposition, recall that $\uparrow_{(i,j)}\!T_{\leq h}$ denotes the tableau formed by incrementing the box $(i,j)$ in the restricted tableau $T_{\leq h}$.

\begin{proposition}\label{prop:construct} 
Let $T,T'\in\IT(w)$ be inversions tableaux for a permutation $w\in\SSS_n$, and assume that $T'=\uparrow_{\mathcal M}\!T$ for some multiset $\mathcal M$ of boxes. Let $\mathcal X$ be the set of boxes $(i,j)$ in $\mathcal M$ such that $\uparrow_{(i,j)}\!T_{\leq j}$ is an inversions tableau. Assume $\mathcal X$ is nonempty, and choose $(\xx,\yy)\in\mathcal X$ such that no other box in $\mathcal X$ appears to the right of $(\xx,\yy)$ in the same row. Let $Y$ be the set of integers $y$ such that $y_0<y\leq n$ and $T(\xx,y)=T(\xx,\yy)<T(\yy,y)$. Let $\mathcal B=\{(\xx,\yy)\}\cup\mathcal B'$, where $\mathcal B'=\{(\xx,y):y\in Y\}$. Let $p_0=T(\xx,\yy)$, and let $q_0$ be the smallest integer that is greater than $p_0$ and that does not appear below $(\xx,\yy)$ in $T$. Then 
\begin{enumerate}[label=\normalfont(\Roman*), ref=\Roman*]
    \item \label{II} $T(\xx,y)=p_0$ and $T(\yy,y)=(\uparrow_{(\xx,y)}\!T)(\xx,y)=q_0$ for every $y\in Y$; 
    \item \label{III} $\mathcal B\subseteq\mathcal M$; 
    \item \label{IV} $\uparrow_{\mathcal B}\!T$ is an inversions tableau; 
    \item \label{I} no box in column $\yy$ has entry $q_0$ in $T$. 
\end{enumerate} 
\end{proposition}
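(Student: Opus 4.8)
The four conclusions are naturally proved in a different order than they are stated, since the later ones depend on the earlier ones. The plan is to establish \eqref{II} and \eqref{I} first (these are structural facts about $T$ and the single-box increments $\uparrow_{(\xx,y)}\!T$), then use these to prove \eqref{III} ($\mathcal B\subseteq\mathcal M$), and finally \eqref{IV} (that $\uparrow_{\mathcal B}\!T$ is an inversions tableau). Throughout, the key hypothesis is that $(\xx,\yy)\in\mathcal X$, meaning $\uparrow_{(\xx,\yy)}\!T_{\leq\yy}$ is an inversions tableau, and that $(\xx,\yy)$ is the rightmost box of $\mathcal X$ in its row.

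\textbf{Conclusion \eqref{II} and \eqref{I}.} First I would analyze the single-box increment of $(\xx,\yy)$. By definition, incrementing $(\xx,\yy)$ replaces $p_0=T(\xx,\yy)$ by $q_0$ (the smallest integer $>p_0$ not appearing below $(\xx,\yy)$), via either a pure or a trade increment. Since $(\xx,\yy)\in\mathcal X$, the tableau $\uparrow_{(\xx,\yy)}\!T_{\leq\yy}$ is an inversions tableau, hence in particular column-injective; this is what forces conclusion \eqref{I}, because if some box in column $\yy$ already had entry $q_0$ in $T$, then after the pure increment placing $q_0$ in $(\xx,\yy)$ we would violate column-injectivity, while a trade increment would not raise the entry in $(\xx,\yy)$ to the value needed for the balanced-hook condition. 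I expect \eqref{I} to drop out of the requirement that $\uparrow_{(\xx,\yy)}\!T_{\leq\yy}$ be column-injective \emph{and} balanced. For \eqref{II}, fix $y\in Y$, so $T(\xx,y)=p_0<T(\yy,y)$. Using that $\Le(\xx,\yy,y)$ is balanced in $T$ together with $T(\xx,\yy)=p_0$, I would show $T(\yy,y)$ is forced to equal $q_0$: the balanced condition on $\hook$s (equivalently on $\Le$-shapes) relates the entries in row $\xx$ and row $\yy$ of column $y$, and the definition of $q_0$ as the smallest admissible value above $p_0$ pins down $T(\yy,y)=q_0$ and simultaneously shows that incrementing $(\xx,y)$ performs a trade with the box holding $q_0$, so $(\uparrow_{(\xx,y)}\!T)(\xx,y)=q_0$.

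\textbf{Conclusion \eqref{III}.} Here the rightmost-in-row maximality of $(\xx,\yy)$ in $\mathcal X$ is essential. I would argue by contradiction: suppose some box $(\xx,y)\in\mathcal B$ is \emph{not} in $\mathcal M$. Since $T'=\uparrow_{\mathcal M}\!T$ and $(\xx,y)\notin\mathcal M$, the entry $T'(\xx,y)$ can only differ from $T(\xx,y)=p_0$ through trades caused by incrementing boxes above $(\xx,y)$ in column $y$; by \cref{lem:Colinjective} applied in column $y$, I can track exactly how $T'(\xx,y)$ compares to $T(\xx,y)$. The plan is to use the balancedness lemmas of \cref{sec:lemmas} (particularly \cref{lem:5.1} and \cref{lem:less_hook}) to show that if $(\xx,y)\notin\mathcal M$ while $T(\xx,y)=p_0<q_0=T(\yy,y)$, then $(\xx,y)$ itself would have to lie in $\mathcal X$ and strictly to the right of $(\xx,\yy)$ — contradicting maximality. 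Likewise $(\xx,\yy)\in\mathcal M$ follows because $(\xx,\yy)\in\mathcal X\subseteq\mathcal M$ by definition of $\mathcal X$ as a subset of $\mathcal M$. I anticipate this is the most delicate step: the subtlety is ensuring that "$(\xx,y)\in\mathcal X$" really follows, i.e.\ that $\uparrow_{(\xx,y)}\!T_{\leq y}$ is genuinely an inversions tableau, which requires checking the balanced-hook condition at $(\xx,y)$ using \eqref{II} and conclusion \eqref{I} (transported to column $y$).

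\textbf{Conclusion \eqref{IV}.} Finally I would verify that $\uparrow_{\mathcal B}\!T$ is an inversions tableau. Incrementing preserves column-injectivity and the property of being a column-injective tableau for $w$ (as noted after the definition of $\uparrow$), and by \cref{obs:incrementing_adding} the Lehmer form simply adds $1$ in each box of $\mathcal B$, so the row-bound condition (entries in row $i$ at most $i$) needs checking only at the incremented boxes. The essential point is balancedness: using \eqref{II}, incrementing the whole row-segment $\mathcal B=\{(\xx,\yy)\}\cup\{(\xx,y):y\in Y\}$ simultaneously changes every relevant $p_0$ to $q_0$ in row $\xx$ and swaps $q_0\mapsto p_0$ into row $\yy$, which is precisely the coordinated change that keeps all $\Le$-shapes $\Le(\xx,\yy,y)$ and the hooks through these boxes balanced — this is exactly the configuration described in \cref{prop:chute}. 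I would verify balancedness hook-by-hook at the affected boxes, invoking the characterization that a tableau is balanced iff every hook is balanced, and concluding that $\uparrow_{\mathcal B}\!T\in\IT(w)$.

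\textbf{Main obstacle.} The hardest part will be conclusion \eqref{III}, specifically showing that the rightmost-in-row choice of $(\xx,\yy)$ forces every $(\xx,y)$ with $y\in Y$ to lie in $\mathcal M$. This requires combining the local balanced-shape analysis (via \cref{lem:5.1} and \cref{lem:less_hook}) with careful column-injectivity bookkeeping (via \cref{lem:Colinjective}) to certify that each such $(\xx,y)$ is itself a member of $\mathcal X$, thereby triggering the maximality contradiction.
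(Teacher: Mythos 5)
The central gap is in your treatment of \eqref{II}. You claim that $T(\yy,y)=q_0$ is a ``structural fact'' that follows from balancedness of $\Le(\xx,\yy,y)$ in $T$ together with the definition of $q_0$, to be established before \eqref{III}. This cannot work: since $T(\xx,y)=T(\xx,\yy)=p_0$, balancedness of $\Le(\xx,\yy,y)$ only demands that $p_0$ lie weakly between $p_0$ and $T(\yy,y)$, which holds automatically and places no upper bound on $T(\yy,y)$; nothing local to $T$ rules out $T(\yy,y)>q_0$. In the paper, the equality $T(\yy,y)=q_0$ is the most delicate conclusion, and its proof runs in the opposite order from yours: one first proves the column-$y$ part of \eqref{III}, namely $(\xx,y)\in\mathcal M$, by applying \cref{lem:less_hook} to the tableau obtained from $T$ by incrementing the boxes of $\mathcal B$ lying in earlier columns (\cref{lem:ifninY1}); this is a direct argument --- not a maximality argument --- and it requires knowing that this partially incremented tableau is still balanced, which the paper obtains from an induction on $n-\yy$ (\cref{lem:induction}) that your proposal omits entirely. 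Only after $(\xx,y)\in\mathcal M$ is known does maximality act: since $(\xx,y)$ lies strictly to the right of $(\xx,\yy)$ in the same row and belongs to $\mathcal M$, it cannot belong to $\mathcal X$, so $\uparrow_{(\xx,y)}\!T$ fails to be balanced, and exploiting that failure is what forces $q_0$ to appear in column $y$, and precisely in row $\yy$ (\cref{lem:ifninY2,lem:ifninY3}). Thus \eqref{II} depends on \eqref{III}, on the existence of $T'=\uparrow_{\mathcal M}\!T$, and on the maximality of $(\xx,\yy)$ --- none of which your sketch of \eqref{II} invokes.

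Your plan for \eqref{III} is also structurally incoherent: you assume $(\xx,y)\notin\mathcal M$ and aim for a contradiction by showing $(\xx,y)\in\mathcal X$, but $\mathcal X\subseteq\mathcal M$ by definition, so the membership you would need to certify is exactly what you assumed false; the rightmost-in-row maximality of $(\xx,\yy)$ never gets to play the role you assign it. A smaller but telling slip occurs in your argument for \eqref{I}: if $q_0$ appeared in column $\yy$, then incrementing $(\xx,\yy)$ would be a trade increment, and a trade increment \emph{does} set the entry of $(\xx,\yy)$ to $q_0$; what actually breaks is that the other box of the trade, which afterwards holds $p_0$, destroys balancedness of a $\Le$-shape of the form $\Le(\xx,i,\yy)$ --- this is the paper's base-case argument for \eqref{I}, and it, unlike your version, survives scrutiny.
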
 

Throughout this subsection, we preserve the notation from \cref{prop:construct}. This entire subsection is devoted to proving this proposition, which we do by induction on $n-\yy$.

For the base case of the induction, suppose $n=\yy$. In this case, condition \eqref{II} holds vacuously because $Y=\emptyset$. Moreover, $\mathcal B=\{(\xx,\yy)\}\subseteq\mathcal M$, and $\uparrow_{(\xx,\yy)}\!T=\uparrow_{(\xx,\yy)}\!T_{\leq\yy}$ is an inversions tableau because $(\xx,\yy)\in\mathcal X$. Thus, conditions \eqref{III} and \eqref{IV} hold. To prove condition \eqref{I}, suppose instead that $q_0$ appears in column $n$; by the definition of $q_0$, it must appear in some box $(i,n)$ with $i>\xx$. Because $\Le(\xx,i,n)$ is balanced in $T$ and $T(i,n)=q_0>p_0=T(\xx,n)$, we must have $T(\xx,i)\leq p_0$. Because $\Le(\xx,i,n)$ is balanced in $\uparrow_{(\xx,\yy)}\!T$ and $(\uparrow_{(\xx,\yy)}\!T)(i,n)=p_0<q_0=(\uparrow_{(\xx,\yy)}\!T)(\xx,n)$, we must have $(\uparrow_{(\xx,\yy)}\!T)(\xx,i)\geq q_0$. This is a contradiction because $(\uparrow_{(\xx,\yy)}\!T)(\xx,i)=T(\xx,i)$. 

Throughout the remainder of this subsection, we may assume $n>\yy$. Let $Y_{<n}=\{y\in Y:y<n\}$ and $\mathcal B_{<n}=\{(\xx,y):y\in Y_{<n}\}$. Let $\mathcal M_{<n}$ be the multiset of boxes obtained from $\mathcal M$ by deleting the boxes in column $n$. 

\begin{lemma}\label{lem:induction} 
No box in column $\yy$ has entry $q_0$ in $T$. For every $y\in Y_{<n}$, we have \[{T(\xx,y)=p_0}\quad\text{and}\quad {T(\yy,y)=(\uparrow_{(\xx,y)}\!T)(\xx,y)=q_0}.\] We have $\mathcal B_{<n}\subseteq\mathcal M_{<n}$, and $\uparrow_{\mathcal B_{<n}}\!T_{\leq n-1}$ is an inversions tableau. 
\end{lemma}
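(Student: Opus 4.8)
The plan is to deduce \cref{lem:induction} directly from the inductive hypothesis of \cref{prop:construct}, applied to the data one column smaller. Since we are in the case $n>\yy$, the permutation $\widehat w\in\SSS_{n-1}$ obtained from $w$ by deleting $n$ satisfies $(n-1)-\yy<n-\yy$, so \cref{prop:construct} is available for $\widehat w$. The objects I would feed into it are the restricted tableaux $T_{\leq n-1}$ and $T'_{\leq n-1}$ together with the multiset $\mathcal M_{<n}$. Because incrementing a box changes only the entries of its own column, the boxes of $\mathcal M$ lying in column $n$ have no effect on columns $1,\ldots,n-1$; hence $T'_{\leq n-1}=\uparrow_{\mathcal M_{<n}}\!T_{\leq n-1}$. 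Moreover, restricting an inversions tableau for $w$ to $\LRT_{n-2}$ yields an inversions tableau for $\widehat w$, since balancedness, column-injectivity, and the row bound all survive truncation and $\ID(\widehat w)=\ID(w)\cap\LRT_{n-2}$; thus $T_{\leq n-1},T'_{\leq n-1}\in\IT(\widehat w)$.

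Next I would check that $(\xx,\yy)$ remains a legitimate choice for the restricted problem. Let $\widehat{\mathcal X}$ be the analogue of $\mathcal X$ for $\widehat w$, namely the set of boxes $(i,j)\in\mathcal M_{<n}$ for which $\uparrow_{(i,j)}\!(T_{\leq n-1})_{\leq j}$ is an inversions tableau. For $j\leq n-1$ we have $(T_{\leq n-1})_{\leq j}=T_{\leq j}$, and the permutation obtained from $\widehat w$ by deleting entries exceeding $j$ coincides with the one obtained from $w$; thus $\widehat{\mathcal X}=\{(i,j)\in\mathcal X:j<n\}$. In particular $(\xx,\yy)\in\widehat{\mathcal X}$ because $\yy<n$, so $\widehat{\mathcal X}$ is nonempty, and since $\widehat{\mathcal X}\subseteq\mathcal X$, the hypothesis that no box of $\mathcal X$ lies to the right of $(\xx,\yy)$ in its row guarantees the same within $\widehat{\mathcal X}$. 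I would then note that the auxiliary data of \cref{prop:construct} computed for $\widehat w$ matches the notation already in play: the restricted version of $Y$ equals $Y_{<n}$ (its defining inequalities involve only columns $\leq n-1$), the restricted version of $\mathcal B$ is $\{(\xx,\yy)\}\cup\mathcal B_{<n}$, and the values $p_0,q_0$ are unchanged because column $\yy$ is identical in $T$ and $T_{\leq n-1}$.

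With these identifications, I would invoke \cref{prop:construct} for $\widehat w$ and read off its four conclusions, translating each back to $T$. Condition \eqref{I} gives that no box in column $\yy$ has entry $q_0$ in $T_{\leq n-1}$, hence none does in $T$. Condition \eqref{II} gives, for each $y\in Y_{<n}$, the equalities $T(\xx,y)=p_0$ and $T(\yy,y)=q_0$; since incrementing $(\xx,y)$ alters only column $y<n$, I may replace $T_{\leq n-1}$ by $T$ in the term $(\uparrow_{(\xx,y)}\!T)(\xx,y)=q_0$. Condition \eqref{III} gives $\{(\xx,\yy)\}\cup\mathcal B_{<n}\subseteq\mathcal M_{<n}$, which in particular yields $\mathcal B_{<n}\subseteq\mathcal M_{<n}$, and condition \eqref{IV} shows that incrementing the restricted analogue of $\mathcal B$ in $T_{\leq n-1}$ produces an inversions tableau, which is the final assertion of \cref{lem:induction}.

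I expect the main obstacle to be bookkeeping rather than a new idea: the delicate points are verifying that the selection set $\widehat{\mathcal X}$ is exactly the column-$<n$ part of $\mathcal X$ (so that the ``rightmost in its row'' maximality of $(\xx,\yy)$ transfers verbatim) and that restriction to $\LRT_{n-2}$ genuinely preserves the inversions-tableau property. Once these are pinned down, the four conclusions follow mechanically from the inductive hypothesis, since no quantity indexed by columns $<n$ is altered either by the restriction operator $(-)_{\leq n-1}$ or by incrementing boxes in column $n$.
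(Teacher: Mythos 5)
Your proposal is correct and is essentially identical to the paper's proof: both deduce the lemma by applying the induction hypothesis of \cref{prop:construct} to $T_{\leq n-1}$, $T'_{\leq n-1}$, and $\mathcal M_{<n}$ for the permutation $\widehat w\in\SSS_{n-1}$, noting that column $\yy$ and all data defining $p_0$, $q_0$, $Y_{<n}$, and the restricted $\mathcal X$ are unaffected by deleting column $n$. The paper states this in three sentences; you have merely spelled out the same bookkeeping in full.
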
 

\begin{proof}
This is a direct consequence of our induction hypothesis. Indeed, $T_{\leq n-1}$ and $T'_{\leq n-1}$ are inversions tableaux, and $T'_{\leq n-1}=\uparrow_{\mathcal M_{<n}}\!T_{\leq n-1}$. Moreover, column $\yy$ in $T_{\leq n-1}$ is identical to column $\yy$ in $T$. 
\end{proof} 

\cref{lem:induction} completes the proof of condition \eqref{I}. We now aim to prove the following statements.  

\begin{enumerate}[label=\normalfont(\Roman*'), ref=\Roman*']
    \item \label{II'} If $n\in Y$, then $T(\yy,n)=(\uparrow_{(\xx,n)}\!T)(\xx,n)=q_0$. 
    \item \label{III'} If $n\in Y$, then $(\xx,n)\in\mathcal M$. 
    \item \label{IV'} Every $\Le$-shape intersecting column $n$ is balanced in $\uparrow_{\mathcal B}\!T$.
\end{enumerate}

Note that if $n\in Y$, then $T(\xx,n)=p_0$ by the definition of $Y$. Therefore, in light of \cref{lem:induction}, statements \eqref{II'} and \eqref{III'} imply \eqref{II} and \eqref{III} from \cref{prop:construct}, and \eqref{IV'} implies that $\uparrow_{\mathcal B}\!T$ is balanced. Once we have proven \eqref{III}, we will know that all the entries in each row $i$ of $\uparrow_{\mathcal B}\!T$ are at most $i$ (since $T'$ can be obtained from $\uparrow_{\mathcal B}\!T$ by incrementing a multiset of boxes), so it will follow that $\uparrow_{\mathcal B}\!T$ is an inversions tableau, which is condition \eqref{IV}. In summary, proving \eqref{II'}, \eqref{III'}, and \eqref{IV'} will complete the proof of \cref{prop:construct}. 

\begin{lemma}\label{lem:h_t} 
Suppose $t\in[p_0+1,q_0-1]$ is an integer. There is a positive integer $h_t<\xx$ such that $T(h_t,\xx)=T(h_t,\yy)=t$. Moreover, if $T(\xx,n)=p_0$, then $T(h_t,n)=t$. 
\end{lemma}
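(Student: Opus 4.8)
The plan is to prove the two assertions separately, using balancedness of $\Le$-shapes of the form $\Le(h_t,\xx,\cdot)$ as the main tool, and to finish the harder ``moreover'' claim by a counting argument rather than a direct inequality. First I would locate $h_t$. Since $t\in[p_0+1,q_0-1]$ and $q_0$ is the smallest integer exceeding $p_0$ that does not appear below $(\xx,\yy)$ in column $\yy$ of $T$, the value $t$ \emph{does} appear below $(\xx,\yy)$; as $T$ is column-injective, there is a unique row $h_t<\xx$ with $T(h_t,\yy)=t$, and distinct values of $t$ give distinct rows. To pin down $T(h_t,\xx)$, I would sandwich it using two balancedness conditions. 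Balancedness of $\Le(h_t,\xx,\yy)$ in $T$, together with $T(\xx,\yy)=p_0<t=T(h_t,\yy)$, forces $T(h_t,\xx)\geq t$. For the reverse inequality I pass to $T^{\ast}=\uparrow_{(\xx,\yy)}\!T_{\leq\yy}$, which is an inversions tableau---and hence balanced---because $(\xx,\yy)\in\mathcal X$. By \cref{lem:induction}, $q_0$ does not occur in column $\yy$ of $T$, so this increment is pure and changes only box $(\xx,\yy)$, raising its entry to $q_0$. Thus $T^{\ast}(h_t,\xx)=T(h_t,\xx)$ and $T^{\ast}(h_t,\yy)=t$ while $T^{\ast}(\xx,\yy)=q_0>t$, and balancedness of $\Le(h_t,\xx,\yy)$ in $T^{\ast}$ forces $T(h_t,\xx)\leq t$. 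Combining, $T(h_t,\xx)=t$.

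For the ``moreover'' statement I would assume $T(\xx,n)=p_0$ (note $\xx<\yy\leq n$, so $\Le(h_t,\xx,n)$ is a valid $\Le$-shape; the case $\yy=n$ is just the first part). Balancedness of $\Le(h_t,\xx,n)$ in $T$, using the already-established $T(h_t,\xx)=t>p_0=T(\xx,n)$, immediately gives $p_0\leq T(h_t,n)\leq t$. The upper bound is precisely what I want, and I would deliberately \emph{not} attempt to prove the matching lower bound $T(h_t,n)\geq t$ directly. Instead, I observe that $p_0=T(\xx,\yy)\geq 1$ since $(\xx,\yy)\in\ID(w)$, so $p_0$ is a nonzero entry already occupying row $\xx$ of column $n$; column-injectivity then forbids it from reappearing, whence $T(h_t,n)\neq p_0$ and therefore $T(h_t,n)\in[p_0+1,t]$.

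To conclude I would run a greedy induction on $t$. All the entries $T(h_t,n)$ are now at least $p_0+1\geq 1$, hence nonzero, so column-injectivity makes $t\mapsto T(h_t,n)$ an injection of $\{p_0+1,\dots,q_0-1\}$ into itself with $T(h_t,n)\leq t$; such an injection is forced to be the identity. Indeed, the bound gives $T(h_{p_0+1},n)=p_0+1$, and once $T(h_{t'},n)=t'$ is known for all $t'<t$, the values $p_0+1,\dots,t-1$ are used up, leaving only $t$ available in $[p_0+1,t]$ for $T(h_t,n)$. Hence $T(h_t,n)=t$ for all $t$. I expect the main obstacle to be the ``moreover'' part: the natural attempt to extract a lower bound $T(h_t,n)\geq t$ from a single $\Le$-shape does not succeed, and the key realization is that one should instead combine the cheap upper bound with column-injectivity---in particular, eliminating the value $p_0$---so that the greedy induction closes the gap.
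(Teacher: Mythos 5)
Your proposal is correct and follows essentially the same route as the paper: locate $h_t$ in column $\yy$ via the definition of $q_0$, sandwich $T(h_t,\xx)$ between the balancedness of $\Le(h_t,\xx,\yy)$ in $T$ and in $\uparrow_{(\xx,\yy)}\!T$, and then prove the ``moreover'' claim by induction on $t$, combining the upper bound $T(h_t,n)\le t$ from $\Le(h_t,\xx,n)$ with column-injectivity in column $n$ to rule out $p_0,p_0+1,\dots,t-1$. The only cosmetic difference is that you justify $T^{\ast}(h_t,\yy)=t$ by arguing the increment is pure, which is unnecessary since $t\notin\{p_0,q_0\}$ means even a trade increment would leave that box unchanged.
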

\begin{proof}
It is immediate from the definition of $q_0$ that there is a positive integer $h_t<\xx$ such that $T(h_t,\yy)=t$. Because $\Le(h_t,\xx,\yy)$ is balanced in $T$ and $T(\xx,\yy)=p_0<t=T(h_t,\yy)$, we must have $T(h_t,\xx)\geq t$. Because $\Le(h_t,\xx,\yy)$ is balanced in $\uparrow_{(\xx,\yy)}\!T$ (since $\uparrow_{(\xx,\yy)}\!T_{\leq\yy}$ is an inversions tableau) and ${(\uparrow_{(\xx,\yy)}\!T)(\xx,\yy)=q_0>t=(\uparrow_{(\xx,\yy)}\!T)(h_t,\yy)}$, we must have $(\uparrow_{(\xx,\yy)}\!T)(h_t,\xx)\leq t$. But $(\uparrow_{(\xx,\yy)}\!T)(h_t,\xx)=T(h_t,\xx)$, so $T(h_t,\xx)=t$. 

Now suppose $T(\xx,n)=p_0$. We will prove by induction on $t$ that $T(h_t,n)=t$. Thus, we may assume that $T(h_{t'},n)=t'$ for every integer $t'\in[p_0+1,t-1]$. Since $T$ is column-injective, this implies that $T(h_t,n)\not\in[p_0+1,t-1]$. Similarly, $T(h_t,n)\neq T(\xx,n)=p_0$. Because $\Le(h_t,\xx,n)$ is balanced in $T$ and $T(\xx,n)=p_0<t=T(h_t,\xx)$, we must have $p_0\leq T(h_t,n)\leq t$. This implies that $T(h_t,n)=t$. 
\end{proof} 

Throughout the remainder of this section, for each integer $t\in[p_0+1,q_0-1]$, we let $h_t$ be as in \cref{lem:h_t}. 

\begin{lemma}\label{lem:notinrow}
No integer in the interval $[p_0+1,q_0-1]$ appears in row $\xx$ or row $\yy$ in $T$. 
\end{lemma}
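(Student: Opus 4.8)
The plan is to derive a contradiction in each of the two cases (an offending entry in row $\xx$, or an offending entry in row $\yy$) by a single uniform device: the auxiliary rows $h_t$ supplied by \cref{lem:h_t}. Fix an integer $t\in[p_0+1,q_0-1]$. By \cref{lem:h_t} there is a row index $h_t<\xx$ with $T(h_t,\xx)=T(h_t,\yy)=t$; this one fact drives both cases.

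First I would rule out $t$ appearing in row $\xx$. If $t=T(\xx,c)$ for some box $(\xx,c)$, then automatically $c>\xx$, and since $T(\xx,\yy)=p_0<t$ we have $c\neq\yy$. Because $h_t<\xx<c$, the triple $\Le(h_t,\xx,c)$ is a genuine $\Le$-shape, and it is balanced in $T$ (inversions tableaux are balanced). Since $T(h_t,\xx)=T(\xx,c)=t$, the balance condition forces the remaining entry $T(h_t,c)$ to equal $t$ as well. But then column $c$ carries the nonzero value $t$ in the two distinct rows $h_t$ and $\xx$, contradicting column-injectivity.

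The row $\yy$ case runs along identical lines. If $t=T(\yy,d)$ then $d>\yy$, the triple $\Le(h_t,\yy,d)$ is a legitimate $\Le$-shape (as $h_t<\yy<d$) and is balanced, and since $T(h_t,\yy)=T(\yy,d)=t$ the balance condition again yields $T(h_t,d)=t$. Then column $d$ contains $t$ in the distinct rows $h_t$ and $\yy$, once more violating column-injectivity.

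The only non-routine ingredient is the existence of $h_t$ with $T(h_t,\xx)=T(h_t,\yy)=t$, which is exactly \cref{lem:h_t}; once that is in hand, each case is a one-line balance argument followed by a column-injectivity clash, so I do not anticipate a real obstacle. The one point requiring mild care is checking that the invoked $\Le$-shapes are legitimate (their three indices must be strictly increasing), and this holds because $h_t<\xx<\yy$ while any box in row $\xx$ (resp.\ row $\yy$) sits in a column exceeding $\xx$ (resp.\ $\yy$).
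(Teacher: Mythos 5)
Your proof is correct and follows essentially the same route as the paper's: invoke \cref{lem:h_t} to get the row $h_t$ with $T(h_t,\xx)=T(h_t,\yy)=t$, then use balancedness of the relevant $\Le$-shape to force a repeated nonzero entry in a column, contradicting column-injectivity. The only difference is cosmetic — the paper writes out the row-$\xx$ case and declares the row-$\yy$ case similar, while you spell out both.
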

\begin{proof}
Suppose $t\in[p_0+1,q_0-1]$ is an integer. We will prove that $t$ does not appear in row $\xx$ in $T$; a similar argument shows that $t$ does not appear in row $\yy$ in $T$. Suppose instead that $T(\xx,j)=t$ for some $j>\xx$. By \cref{lem:h_t}, we have $T(h_t,\xx)=T(\xx,j)=t$. Since $\Le(h_t,\xx,j)$ is balanced in $T$, we have $T(h_t,j)=t=T(\xx,j)$. This is a contradiction because $T$ is column-injective. 
\end{proof}

The next lemma proves \eqref{III'} and makes progress toward proving \eqref{II'}. 

\begin{lemma}\label{lem:ifninY1}
Suppose $n\in Y$. Then $(\xx,n)\in\mathcal M$, and $(\uparrow_{(\xx,n)}\!T)(\xx,n)=q_0$. 
\end{lemma} 
\begin{proof}
Let $T^\#=\uparrow_{\mathcal B_{<n}}\!T$. Note that $T(\xx,\yy)<T^\#(\xx,\yy)$ since $(\xx,\yy)\in\mathcal B_{<n}$. \cref{lem:induction} tells us that $\uparrow_{\mathcal B_{<n}}\!T_{\leq n-1}$ is balanced, so all $\Le$-shapes that do not intersect column $n$ are balanced in $T^\#$. The tableaux $T$ and $T^\#$ agree in column $n$ and in every row below row $\xx$. Since $T$ is balanced, it follows that for all $1\leq r<\xx<j\leq n$, the $\Le$-shape  $\Le(r,\xx,j)$ is balanced in $T^\#$. For each box of the form $(j,n)$ in $\hook(\xx,n)$, we have $T(j,n)=T^\#(j,n)$. For each box of the form $(\xx,j)$ in $\hook(\xx,n)$, we have $T(\xx,j)\leq T^\#(\xx,j)$. Because $n\in Y$, we also know that $T^\#(\xx,n)=p_0<T^\#(\yy,n)$ and $T^\#(\xx,n)=p_0<q_0=T^\#(\xx,\yy)$. Since $\hook(\xx,n)$ is balanced in $T$, this implies that $T^\#(\xx,n)$ is strictly less than the median of the entries in $\hook(i,n)$ in $T^\#$. Therefore, we can apply \cref{lem:less_hook} (with $i=\xx$ and $k=n$ and with $T^\#$ playing the role of $T$) to find that $(\xx,n)\in\mathcal M$. 

Because $n\in Y$, we have $T(\xx,n)=p_0$. For each integer $t\in[p_0+1,q_0-1]$, we know by \cref{lem:h_t} that there is a positive integer $h_t<\xx$ such that $T(h_t,\xx)=T(h_t,\yy)=T(h_t,n)=t$. This implies that $(\uparrow_{(\xx,n)}\!T)(\xx,n)\geq q_0$. Suppose by way of contradiction that $(\uparrow_{(\xx,n)}\!T)(\xx,n)>q_0$. Then there is a positive integer $h_{q_0}<\xx$ such that $T(h_{q_0},n)=q_0$. All of the integers in $[p_0+1,q_0]$ appear below $(\xx,n)$ in $T$, and we have $T(\yy,n)>T(\xx,n)=p_0$ because $n\in Y$. Consequently, $T(\yy,n)>q_0$. The $\Le$-shape $\Le(h_{q_0},\yy,n)$ (indicated by a {\color{Purple}purple} rectangle in \cref{fig:A1}) is balanced in $T$, so $T(h_{q_0},\yy)<q_0$. Since the entries $p_0,p_0+1,\ldots,q_0-1$ appear in column $\yy$ of $T$ in rows $\xx,h_{p_0+1},\ldots,h_{q_0-1}$, we deduce that $T(h_{q_0},\yy)<p_0$. The $\Le$-shape $\Le(h_{q_0},\xx,\yy)$ (indicated by a {\color{MildGreen}green} rectangle in \cref{fig:A1}) is balanced in $T$, so $T(h_{q_0},\xx)<p_0$. However, this contradicts the fact that $\Le(h_{q_0},\xx,n)$ is balanced in $T$. 
\end{proof}

\begin{figure}[ht]
  \begin{center}
  \includegraphics[height=5.164cm]{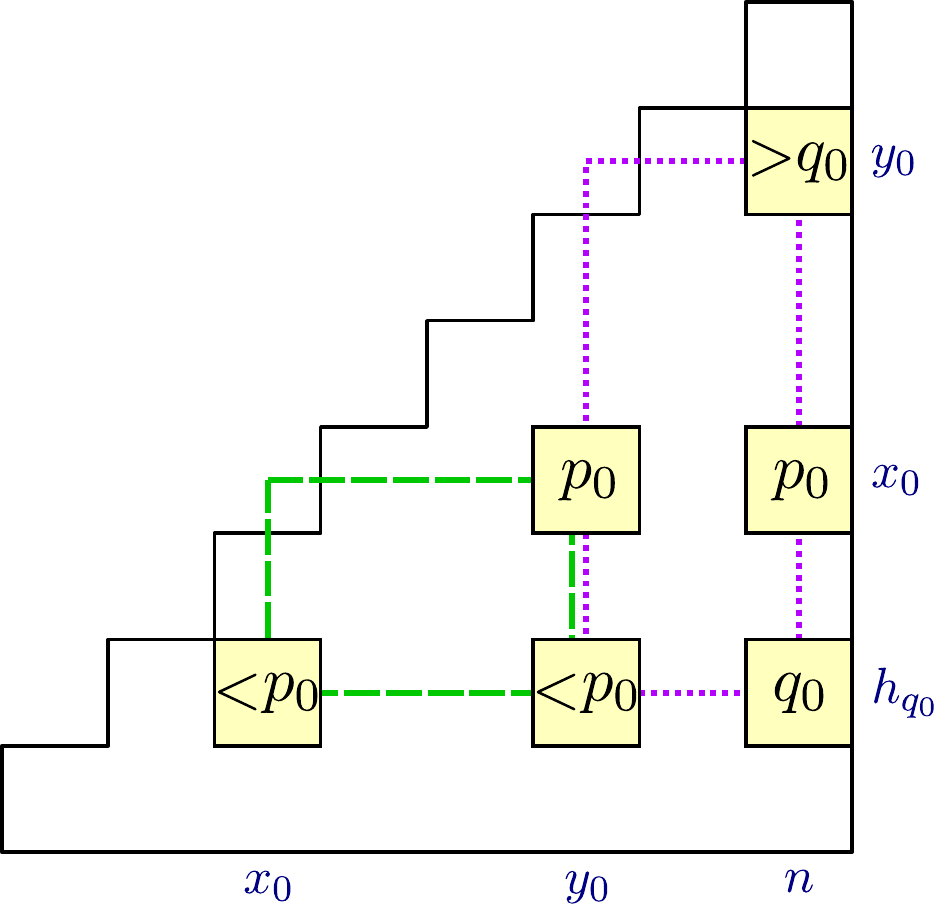}
  \end{center}
\caption{A schematic illustration of some entries in the tableau $T$ in the proof of \cref{lem:ifninY1}. By assuming that $T(h_{q_0},n)=q_0$, we are led to the contradiction that $\Le(h_{q_0},\xx,n)$ is not balanced. }\label{fig:A1} 
\end{figure}

\begin{lemma}\label{lem:ifninY2} 
Suppose $n\in Y$. Then $q_0$ appears in column $n$ of $T$. 
\end{lemma}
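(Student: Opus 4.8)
The plan is to rule out the only alternative to the desired conclusion. By \cref{lem:ifninY1} we have $(\xx,n)\in\mathcal M$ and $(\uparrow_{(\xx,n)}\!T)(\xx,n)=q_0$, so incrementing box $(\xx,n)$ replaces its entry $p_0$ by $q_0$. By the definition of an increment, this is a \emph{trade} increment precisely when $q_0$ occurs in column $n$, and a \emph{pure} increment otherwise. Thus it suffices to show the increment is not pure. I would argue by contradiction: assuming $q_0$ does not appear in column $n$, the increment at $(\xx,n)$ is pure, so $\uparrow_{(\xx,n)}\!T$ agrees with $T$ except in box $(\xx,n)$. The goal is to show that, under this assumption, $\uparrow_{(\xx,n)}\!T$ is an inversions tableau; since $(\xx,n)\in\mathcal M$ and $\uparrow_{(\xx,n)}\!T_{\leq n}=\uparrow_{(\xx,n)}\!T$, this would place $(\xx,n)$ in $\mathcal X$, and as $n>\yy$ the box $(\xx,n)$ lies strictly to the right of $(\xx,\yy)$ in row $\xx$, contradicting the choice of $(\xx,\yy)$.

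Verifying that $\uparrow_{(\xx,n)}\!T$ is an inversions tableau splits into the three defining conditions. Column-injectivity and the support condition hold automatically because a pure increment keeps $\uparrow_{(\xx,n)}\!T$ a column-injective tableau for $w$. The row condition reduces to the single inequality $q_0\leq\xx$, which I obtain from $(\xx,\yy)\in\mathcal X$: by \eqref{I} the increment at $(\xx,\yy)$ is pure, so $q_0$ is an entry of row $\xx$ in the inversions tableau $\uparrow_{(\xx,\yy)}\!T_{\leq\yy}$ and is therefore at most $\xx$. The substance of the argument is balancedness. Since only box $(\xx,n)$ changes, I only need to re-examine the $\Le$-shapes $\Le(i,\xx,n)$ with $i<\xx$ and $\Le(\xx,j,n)$ with $\xx<j<n$. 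Here \cref{lem:h_t} is the key input: it confines each of the values $p_0,p_0+1,\dots,q_0-1$ to a row $\leq\xx$ of column $n$, so that (by column-injectivity) any entry $T(j,n)$ with $j>\xx$ avoids the interval $[p_0,q_0-1]$, while \cref{lem:notinrow} controls the entries in row $\xx$. A short case analysis should then show that every $\Le(i,\xx,n)$ remains balanced, and that every $\Le(\xx,j,n)$ remains balanced \emph{except} possibly when $T(\xx,j)=p_0$ and $T(j,n)<p_0$. The main obstacle is thus reduced to ruling out such a column $j$ with $\xx<j<n$ and $j\neq\yy$.

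To eliminate that configuration I would split on the position of $j$, noting first that the pure-increment assumption gives $T(\yy,n)\geq q_0+1$ (since $q_0$ is absent from column $n$ while $n\in Y$ forces $T(\yy,n)>p_0$, hence $T(\yy,n)\geq q_0$). If $\xx<j<\yy$, then balancedness of $\Le(\xx,j,\yy)$ in the inversions tableau $\uparrow_{(\xx,\yy)}\!T_{\leq\yy}$, whose $(\xx,\yy)$-entry is $q_0$, forces $T(j,\yy)\geq q_0$; combined with $T(j,n)<p_0$ and $T(\yy,n)\geq q_0$, this makes $\Le(j,\yy,n)$ unbalanced in $T$, a contradiction. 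If instead $\yy<j<n$, then balancedness of $\Le(\yy,j,n)$ in $T$ together with $T(j,n)<p_0<T(\yy,n)$ forces $T(\yy,j)\geq T(\yy,n)\geq q_0+1$; in particular $T(\xx,j)=p_0<T(\yy,j)$, so $j\in Y_{<n}$, and then \cref{lem:induction} yields $T(\yy,j)=q_0$, contradicting $T(\yy,j)\geq q_0+1$.

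In summary, the heart of the proof is the balancedness check, whose bookkeeping isolates the single bad configuration $T(\xx,j)=p_0$, $T(j,n)<p_0$; the two contradictions that dispose of it draw on $(\xx,\yy)\in\mathcal X$ in one range of $j$ and on the inductive hypothesis \cref{lem:induction} in the other. I expect this balancedness verification—rather than the row or injectivity conditions—to be the main obstacle, since it is where the combinatorics of \cref{lem:h_t}, \cref{lem:notinrow}, and the maximality of $(\xx,\yy)$ all come into play.
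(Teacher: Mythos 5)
Your proposal is correct and follows essentially the same route as the paper: both arguments reduce to the fact that $(\xx,n)\in\mathcal M$ but cannot lie in $\mathcal X$ by the maximality of $(\xx,\yy)$, so if $q_0$ were absent from column $n$ the pure increment at $(\xx,n)$ would have to break balancedness, and the same case analysis on the $\Le$-shapes $\Le(i,\xx,n)$ and $\Le(\xx,j,n)$ (using \cref{lem:h_t}, \cref{lem:notinrow}, and \cref{lem:induction}) isolates and eliminates the configuration $T(\xx,j)=p_0$, $T(j,n)<p_0$. The only cosmetic difference is in the subcase $\xx<j<\yy$, where you extract $T(j,\yy)\geq q_0$ from $\Le(\xx,j,\yy)$ in $\uparrow_{(\xx,\yy)}\!T_{\leq\yy}$ and contradict balance of $\Le(j,\yy,n)$ in $T$, whereas the paper runs the same two shapes in the opposite order; the arguments are equivalent.
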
 
\begin{proof}
Let $T^\#=\uparrow_{(\xx,n)}\!T$. It follows from the definition of the box $(\xx,\yy)$ that $(\xx,n)\not\in\mathcal X$, so $T^\#$ is not an inversions tableau. We know by \cref{lem:ifninY1} that $(\xx,n)\in\mathcal M$, so for each $i$, the entries in row $i$ of $T^\#$ are at most $i$. This implies that $T^\#$ is not balanced. 

Suppose by way of contradiction that $q_0$ does not appear in column $n$ of $T$. Then by \cref{lem:ifninY1}, the tableau $T^\#$ is obtained from $T$ by replacing the entry $p_0$ in box $(\xx,n)$ with the entry $q_0$. Since $T$ is balanced but $T^\#$ is not, there is a $\Le$-shape containing $(\xx,n)$ that is not balanced in $T^\#$. We consider two cases. 

\medskip 

\noindent {\bf Case 1.} Suppose there exists $i<\xx$ such that $\Le(i,\xx,n)$ is not balanced in $T^\#$. Let $t=T(i,n)$. Since $T^\#(i,\xx)=T(i,\xx)$ and $T^\#(i,n)=T(i,n)=t$, we must have $t\in[p_0+1,q_0-1]$. By \cref{lem:h_t}, $i=h_t$. But then $T(i,\xx)=t$, so $T^\#(i,\xx)=T^\#(i,n)$. Hence, $\Le(i,\xx,n)$ is balanced in $T^\#$, which is a contradiction. 

\medskip 

\noindent {\bf Case 2.} Suppose there exists $i$ satisfying $\xx<i<n$ such that $\Le(\xx,i,n)$ is not balanced in $T^\#$. We know by \cref{lem:h_t} that the integers in $[p_0+1,q_0-1]$ appear below $(\xx,n)$ in $T$, and we have assumed that $q_0$ does not appear in column $n$ of $T$, so either $T(i,n)<p_0$ or $T(i,n)>q_0$. Also, because $n\in Y$, we have $T(\yy,n)>p_0$, so in fact $T(\yy,n)>q_0$. If $T(i,n)>q_0$, then because $\Le(\xx,i,n)$ is balanced in $T$, it is also balanced in $T^\#$. This is impossible, so we must have $T(i,n)<p_0$. Since $\Le(\xx,i,n)$ is balanced in $T$, this implies that $T(\xx,i)\geq p_0$. Since $\Le(\xx,i,n)$ is not balanced in $T^\#$ and $T^\#(\xx,i)=T(\xx,i)$, we have $T(\xx,i)<q_0$. Thus, $p_0\leq T(\xx,i)<q_0$. But \cref{lem:notinrow} tells us that $T(\xx,i)\not\in[p_0+1,q_0-1]$, so $T(\xx,i)=p_0$. Because $n\in Y$, we have $T(\yy,n)>p_0>T(i,n)$, so $i\neq\yy$. We now consider two subcases depending on whether $i<\yy$ or $i>\yy$. 

Assume first that $i<\yy$. (See the left side of \cref{fig:A2}.) Since $\Le(i,\yy,n)$ is balanced in $T$, we must have $T(i,\yy)\leq T(i,n)<p_0$. Let $T^\dd=\uparrow_{\mathcal B_{<n}}T$. 
Now consider the $\Le$-shape $\Le(\xx,i,\yy)$, which is balanced in $T^\dd$ by \cref{lem:induction}. Since $i\not\in\mathcal B_{<n}$, we have $T^\dd(\xx,i)=T(\xx,i)<q_0$. Since $q_0$ does not appear in column $\yy$ of $T$ (by \cref{lem:induction}), we have $T^\dd(i,\yy)=T(i,\yy)<p_0$. But this contradicts the fact that $\Le(\xx,i,\yy)$ is balanced in $T^\dd$ because $T^\dd(\xx,\yy)=q_0$. 

Next, assume that $i>\yy$. (See the right side of \cref{fig:A2}.) We know that $\Le(\yy,i,n)$ is balanced in $T$ and that $T(i,n)<p_0<q_0<T(\yy,n)$ (since $n\in Y$), so we must have $T(\yy,i)>q_0$. We have shown that $T(\xx,i)=T(\xx,\yy)=p_0<T(\yy,i)$, so $i\in Y$. But then \cref{lem:induction} tells us that $T(\yy,i)=q_0$, which is a contradiction. 
\end{proof} 

\begin{figure}[hbpt] 
  \begin{center}
  \includegraphics[height=6.114cm]{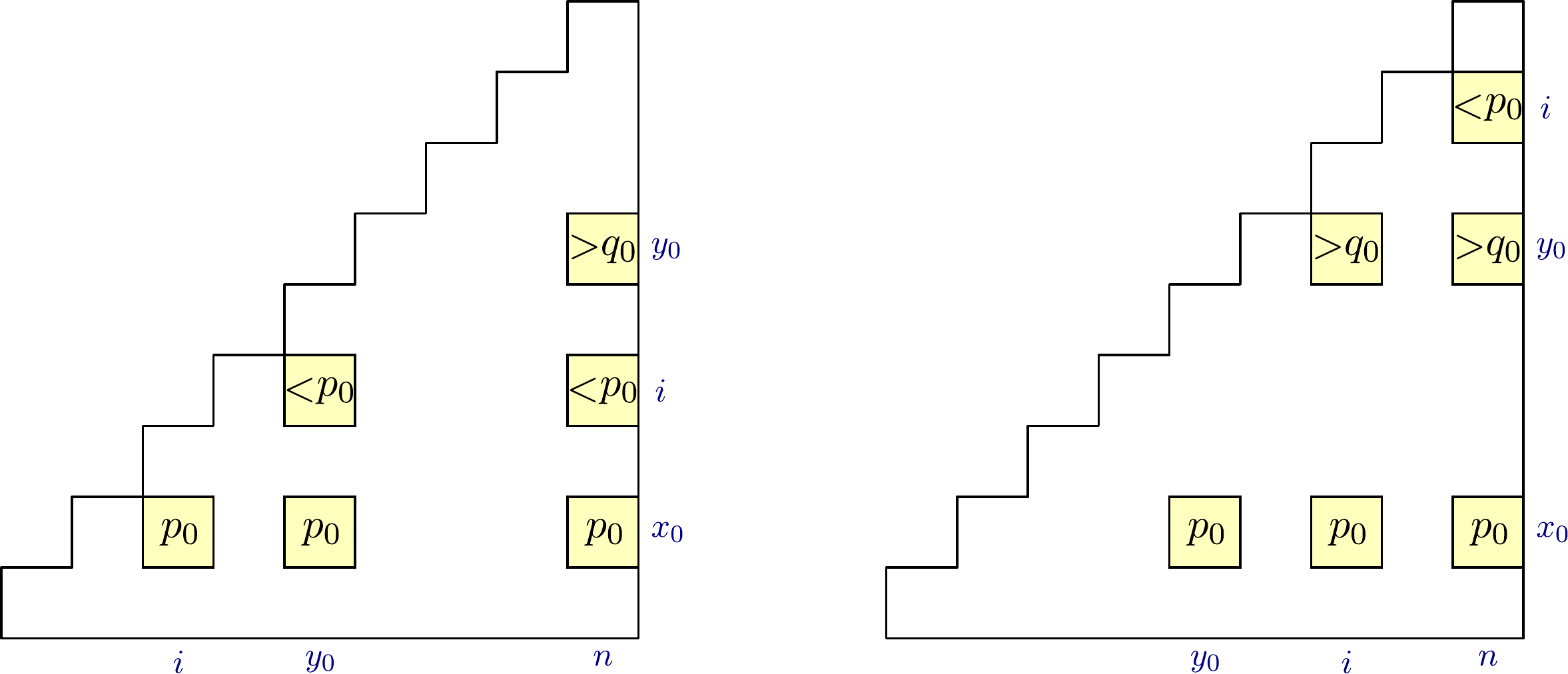}
  \end{center}
\caption{A schematic illustration of some entries in the tableau $T$ in Case~2 of the proof of \cref{lem:ifninY2}. On the left is the case where $i<\yy$; on the right is the case where $i>\yy$.  }\label{fig:A2} 
\end{figure} 

\begin{lemma}\label{lem:ifninY3}
Suppose $n\in Y$. Then $T(\yy,n)=q_0$. 
\end{lemma}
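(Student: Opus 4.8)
The plan is to locate the value $q_0$ in column $n$ and show it must sit in row $\yy$. Since $n\in Y$, we have $T(\xx,n)=p_0$ and $T(\yy,n)>p_0$. By \cref{lem:ifninY1}, incrementing $(\xx,n)$ turns $p_0$ into $q_0$, so $q_0$ does not appear below $(\xx,n)$ while every integer in $[p_0+1,q_0-1]$ does; by \cref{lem:h_t} (using $T(\xx,n)=p_0$) these intermediate values occupy column $n$ in rows $h_t<\xx$. Together with \cref{lem:ifninY2}, which guarantees $q_0$ appears somewhere in column $n$, and with column-injectivity, there is a unique row $i_0$ with $T(i_0,n)=q_0$, and $i_0>\xx$ (it is neither at row $\xx$, where the entry is $p_0$, nor below it). The same bookkeeping shows $T(\yy,n)$ avoids $\{p_0+1,\dots,q_0-1\}$ and exceeds $p_0$, so $T(\yy,n)\geq q_0$. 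It therefore suffices to prove $i_0=\yy$; if $i_0\neq\yy$, then column-injectivity forces $T(\yy,n)>q_0$, and I would derive a contradiction in each of the two cases $i_0>\yy$ and $\xx<i_0<\yy$.

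For the case $i_0>\yy$, I would chain three balance conditions in $T$. Balancedness of $\Le(\yy,i_0,n)$ together with $T(i_0,n)=q_0<T(\yy,n)$ forces $T(\yy,i_0)>q_0$; balancedness of $\Le(\xx,i_0,n)$ with $T(\xx,n)=p_0<q_0$ forces $T(\xx,i_0)\leq p_0$; and balancedness of $\Le(\xx,\yy,i_0)$ with $T(\xx,\yy)=p_0<T(\yy,i_0)$ forces $T(\xx,i_0)\geq p_0$. Hence $T(\xx,i_0)=p_0=T(\xx,\yy)<T(\yy,i_0)$, so $i_0\in Y_{<n}$, and \cref{lem:induction} then gives $T(\yy,i_0)=q_0$, contradicting $T(\yy,i_0)>q_0$.

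The case $\xx<i_0<\yy$ is the one I expect to be the main obstacle, because the contradiction does not come from balancedness of $T$ itself but from the defining property of the chosen box $(\xx,\yy)\in\mathcal X$. First I would use balancedness of $\Le(\xx,i_0,n)$ to get $T(\xx,i_0)\leq p_0$, and balancedness of $\Le(i_0,\yy,n)$ (with $T(\yy,n)>q_0=T(i_0,n)$) to get $T(i_0,\yy)\leq q_0$; since $q_0$ is absent from column $\yy$ (\cref{lem:induction}) and $p_0,\dots,q_0-1$ already occupy lower rows of that column, injectivity upgrades this to $T(i_0,\yy)<p_0$. Balancedness of $\Le(\xx,i_0,\yy)$ then pins down $T(\xx,i_0)=p_0$. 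The key step is to observe that, because $q_0$ does not appear in column $\yy$, incrementing $(\xx,\yy)$ is a \emph{pure} increment, so $\uparrow_{(\xx,\yy)}\!T$ agrees with $T$ except that box $(\xx,\yy)$ now holds $q_0$. In this tableau the two lower entries of $\Le(\xx,i_0,\yy)$ are still $T(\xx,i_0)=p_0$ and $T(i_0,\yy)<p_0$, both strictly below $q_0$, so $\Le(\xx,i_0,\yy)$ is unbalanced. But all three of its boxes lie in columns $\leq\yy$, and $(\xx,\yy)\in\mathcal X$ means $\uparrow_{(\xx,\yy)}\!T_{\leq\yy}$ is an inversions tableau, hence balanced---a contradiction.

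Ruling out both cases forces $i_0=\yy$, that is, $T(\yy,n)=q_0$. The main delicacy, beyond the pure-increment observation, is the repeated use of column-injectivity together with \cref{lem:h_t} to control exactly which rows of columns $\yy$ and $n$ hold the values $p_0,\dots,q_0$; keeping careful track of these occupancies is what makes each balance deduction go through.
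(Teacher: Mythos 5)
Your proof is correct and takes essentially the same approach as the paper's: both locate the unique row $i_0>\xx$ of column $n$ holding $q_0$ via \cref{lem:ifninY1,lem:ifninY2}, deduce $T(\yy,n)>q_0$ if $i_0\neq\yy$, and split into the cases $i_0>\yy$ (showing $i_0\in Y$ and contradicting \cref{lem:induction}) and $\xx<i_0<\yy$ (contradicting the balancedness of $\Le(\xx,i_0,\yy)$ in $\uparrow_{(\xx,\yy)}\!T_{\leq\yy}$ guaranteed by $(\xx,\yy)\in\mathcal X$). The minor extra steps you include (the pure-increment observation and the sharper bounds $T(i_0,\yy)<p_0$ and $T(\xx,i_0)=p_0$) are sound but not needed beyond what the paper uses.
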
 

\begin{proof}
We know by \cref{lem:ifninY2} that there is some row $i$ such that $T(i,n)=q_0$, and we know by \cref{lem:ifninY1} that $i>\xx$. Suppose by way of contradiction that $i\neq\yy$. Because $n\in Y$, we have $T(\yy,n)>p_0$. We also know by \cref{lem:h_t} that all of the integers in $[p_0+1,q_0-1]$ appear below $(\xx,n)$ in $T$, so $T(\yy,n)>q_0$. We now consider two cases. 

\medskip 

\noindent {\bf Case 1.} Suppose $\xx<i<\yy$. (See the left side of \cref{fig:A3}.) Because $\Le(i,\yy,n)$ is balanced in $T$, we have $T(i,\yy)\leq T(i,n)=q_0$. \cref{lem:induction} tells us that $q_0$ does not appear in column $\yy$ of $T$,
so $T(i,\yy)<q_0$. Because $\Le(\xx,i,n)$ is balanced in $T$ and $T(i,n)>T(\xx,n)=p_0$, we have $T(\xx,i)\leq p_0$. The definition of $(\xx,\yy)$ ensures that $\uparrow_{(\xx,\yy)}\!T_{\leq\yy}$ is an inversions tableau, so in particular, $\Le(\xx,i,\yy)$ is balanced in $\uparrow_{(\xx,\yy)}\!T_{\leq\yy}$. The boxes $(\xx,i)$ and $(i,\yy)$ have the same entries in $\uparrow_{(\xx,\yy)}\!T_{\leq\yy}$ as in $T$, so $(\uparrow_{(\xx,\yy)}\!T_{\leq\yy})(\xx,\yy)$ lies weakly between $T(\xx,i)$ and $T(i,\yy)$. We have $T(\xx,i)\leq p_0<q_0$ and $T(i,\yy)<q_0$, so $(\uparrow_{(\xx,\yy)}\!T_{\leq\yy})(\xx,\yy)<q_0$. This is a contradiction because $(\uparrow_{(\xx,\yy)}\!T_{\leq\yy})(\xx,\yy)=q_0$ by the definition of $q_0$.   

\medskip 

\noindent {\bf Case 2.} Suppose $\yy<i$. (See the right side of \cref{fig:A3}.) Because $\Le(\yy,i,n)$ is balanced in $T$, we have $T(\yy,i)\geq T(\yy,n)>q_0$. Because $\Le(\xx,\yy,i)$ is balanced in $T$ and $T(\xx,\yy)=p_0<T(\yy,i)$, we have $p_0\leq T(\xx,i)$. We have $T(\xx,n)=p_0$ since $n\in Y$. Because $\Le(\xx,i,n)$ is balanced in $T$ and $T(i,n)=q_0>p_0=T(\xx,n)$, we have $ T(\xx,i)\leq p_0$. Hence, $T(\xx,i)=p_0<T(\yy,i)$. It follows that $i\in Y$. \cref{lem:induction} tells us that $T(\yy,i)=q_0$, which is a contradiction because we have seen that $T(\yy,i)>q_0$. 
\end{proof} 

\begin{figure}[hbpt]
  \begin{center}
  \includegraphics[height=6.114cm]{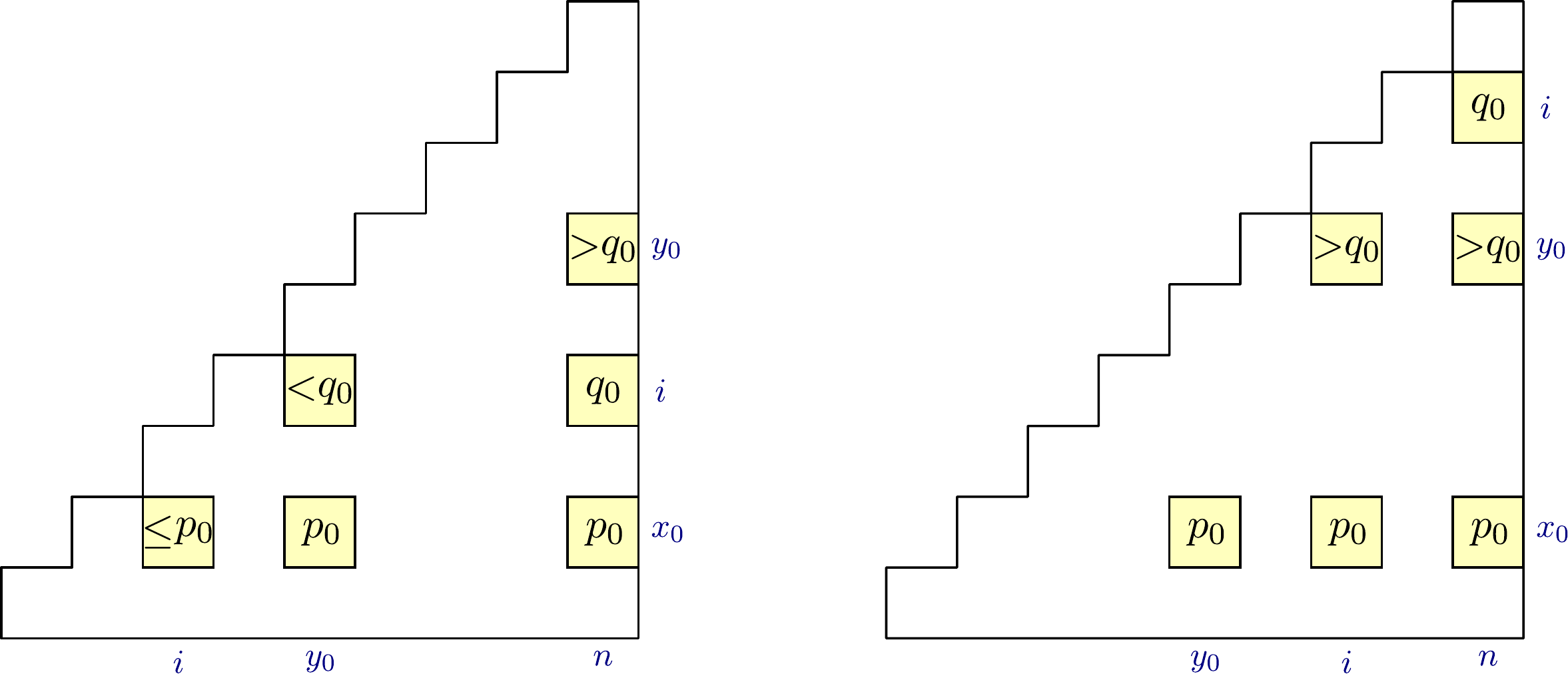}
  \end{center}
\caption{A schematic illustration of some entries in the tableau $T$ in the proof of \cref{lem:ifninY3}. On the left is the case where $i<\yy$; on the right is the case where $i>\yy$. }\label{fig:A3} 
\end{figure}

\cref{lem:ifninY1,lem:ifninY3} imply that \eqref{II'} holds. The next two lemmas are devoted to proving \eqref{IV'}. 

\begin{lemma}\label{lem:ifninY4}
Suppose $n\in Y$. Then every $\Le$-shape intersecting column $n$ is balanced in $\uparrow_{\mathcal B}\! T$. 
\end{lemma}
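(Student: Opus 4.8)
The plan is to prove \cref{lem:ifninY4} by analyzing, for each of the two types of $\Le$-shapes that touch column $n$, how their entries change when we pass from $T$ to $\uparrow_{\mathcal B}\!T$. Since we are assuming $n\in Y$, we know from \eqref{II'} (that is, from \cref{lem:ifninY1,lem:ifninY3}) that $(\xx,n)\in\mathcal B$ and that incrementing $(\xx,n)$ replaces the entry $p_0$ in box $(\xx,n)$ by $q_0=T(\yy,n)$. Crucially, column $n$ of $\uparrow_{\mathcal B}\!T$ is obtained from column $n$ of $T$ by a single \emph{trade increment} in box $(\xx,n)$: the entries $p_0$ and $q_0$ swap positions between rows $\xx$ and $\yy$, and no other entry in column $n$ changes. (By \cref{lem:ifninY2}, $q_0$ already appears in column $n$, namely in row $\yy$, which is exactly why the increment is a trade rather than a pure increment.) All other boxes in $\mathcal B$ lie in row $\xx$, in columns strictly to the left of $n$, so they do not affect column $n$ at all. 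This reduces the whole lemma to understanding a single swap of two entries in one column.

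Next I would fix an arbitrary $\Le$-shape $\Le(r,s,n)$ with $r<s<n$ and verify balance in $\uparrow_{\mathcal B}\!T$, splitting into cases according to whether $s=\xx$, whether $s=\yy$, and otherwise the position of the rows $r,s$ relative to $\xx$ and $\yy$. The only entries in column $n$ that change are those in rows $\xx$ and $\yy$, so a $\Le$-shape $\Le(r,s,n)$ can fail to remain balanced only if $s\in\{\xx,\yy\}$ or $r\in\{\xx,\yy\}$, or if its column-$n$ corner uses one of these rows; for every other $\Le(r,s,n)$ the three relevant entries are unchanged from $T$ (where they were balanced by hypothesis), so balance is automatic. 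For the genuinely affected shapes, I would use \cref{lem:h_t} (which locates the rows $h_t$ carrying each intermediate value $t\in[p_0+1,q_0-1]$ in columns $\xx$, $\yy$, and $n$) and \cref{lem:notinrow} (no such $t$ appears in row $\xx$ or row $\yy$) together with the balance of the corresponding $\Le$-shapes in $T$ to pin down the entries in columns $\xx$ and $\yy$, and then check the defining inequality $\min\{\cdot,\cdot\}\leq\cdot\leq\max\{\cdot,\cdot\}$ directly.

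I expect the main obstacle to be the case where the middle row of the $\Le$-shape is $\xx$, i.e.\ shapes of the form $\Le(r,\xx,n)$ with $r<\xx$, whose corner entry $(\xx,n)$ jumps from $p_0$ to $q_0$. Here I must show that the entry $T(r,\xx)$ is large enough to keep the shape balanced after the jump; this is precisely where the structural information about the rows $h_t$ from \cref{lem:h_t} does the work, since for $r=h_t$ we have $T(h_t,\xx)=t$ and $T(h_t,n)=t$, forcing the balance condition. A parallel delicate case is $\Le(\xx,i,n)$ and $\Le(\yy,i,n)$ with $\xx<i<n$ or $\yy<i<n$, where one corner entry changes and I must rule out, using $T(\yy,n)=q_0$ and the balance of the neighboring shapes in $T$, that the new value falls outside the allowed window; the arguments here closely mirror the two subcases already handled in the proof of \cref{lem:ifninY2}, so I would reuse that bookkeeping. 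Once all cases are dispatched, balance of every $\Le$-shape meeting column $n$ follows, completing the proof.
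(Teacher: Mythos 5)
Your proposal is correct and follows essentially the same route as the paper's proof: reduce to the observation that only boxes in rows $\xx$ and $\yy$ change (with column $n$ undergoing a single trade swapping $p_0$ and $q_0$ between rows $\xx$ and $\yy$), then case-split on how $\Le(i,j,n)$ meets $\{\xx,\yy\}$, using \cref{lem:h_t} for the shapes $\Le(r,\xx,n)$ and $\Le(r,\yy,n)$ and \cref{lem:notinrow} to exclude intermediate values in the remaining cases. The paper's Cases 2 and 3 are dispatched a bit more directly than by ``mirroring'' \cref{lem:ifninY2} (via equality of the two changed entries when $j\in\{\yy\}\cup Y$, and via $T(i,j),T(j,n)\notin[p_0+1,q_0-1]$ otherwise), but the ingredients you cite are exactly the ones used.
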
 

\begin{proof}
Let $T^\dd=\uparrow_{\mathcal B}\!T$.
It follows from \cref{lem:induction,lem:ifninY1,lem:ifninY3} that the set of boxes where $T$ and $T^\dd$ disagree is $\mathcal B\cup\{(\yy,y):y\in Y\}$. Moreover, we have $T(\xx,\yy)=p_0$ and $T^\dd(\xx,\yy)=q_0$, and for every $y\in Y$, we have $T(\xx,y)=T^\dd(\yy,y)=p_0$ and $T(\yy,y)=T^\dd(\xx,y)=q_0$.  
Thus, let us fix $1\leq i<j<n$; we will show that $\Le(i,j,n)$ is balanced in $T^\dd$. If the sets $\{i,j\}$ and $\{\xx,\yy\}$ are disjoint, then this is immediate because $\Le(i,j,n)$ is balanced in $T$. Therefore, we may assume $\{i,j\}\cap\{\xx,\yy\}\neq\emptyset$. 

\medskip 

\noindent {\bf Case 1.} Suppose $i\not\in\{\xx,\yy\}$. Then $j\in\{\xx,\yy\}$. When we transform $T$ into $T^\dd$, the entries  in the two lower boxes in $\Le(i,j,n)$ do not change, while the entry in the upper box changes either from $p_0$ to $q_0$ (if $j=\xx$) or from $q_0$ to $p_0$ (if $j=\yy$). If $T(i,n)\leq p_0$ or $T(i,n)\geq q_0$, then $\Le(i,j,n)$ remains balanced when we transform $T$ into $T^\dd$. If $T(i,n)$ is some integer $t\in[p_0+1,q_0-1]$, then by \cref{lem:h_t}, we have $i=h_t$ and $T(i,n)=T(i,\xx)=t$. In this case, $T^\dd(i,n)=T^\dd(i,j)=t$, so $\Le(i,j,n)$ is balanced in $T^\dd$. 

\medskip 

\noindent {\bf Case 2.} Suppose $i\in\{\xx,\yy\}$ and $j\in\{\yy\}\cup Y$. Since $j$ and $n$ are both in $\{\yy\}\cup Y$, we have $T^\dd(i,j)=T^\dd(i,n)$, so $\Le(i,j,n)$ is balanced in $T^\dd$. 

\medskip 

\noindent {\bf Case 3.} Suppose $i\in\{\xx,\yy\}$ and $j\not\in\{\yy\}\cup Y$. We have $T(i,j)=T^\dd(i,j)$ and $T(j,n)=T^\dd(j,n)$. Moreover, $\{T(i,n),T^*(i,n)\}=\{p_0,q_0\}$. \cref{lem:h_t} tells us that all of the integers in the interval $[p_0+1,q_0-1]$ appear below the box $(\xx,n)$ in $T$, so $T(j,n)$ does not belong to that interval. \cref{lem:notinrow} tells us that $T(i,j)$ is not in $[p_0+1,q_0-1]$ either. Hence, $\Le(i,j,n)$ remains balanced when we transform $T$ into $T^\dd$. 
\end{proof} 

\begin{lemma}\label{lem:ifnnotinY}
Suppose $n\not\in Y$. Then every $\Le$-shape intersecting column $n$ is balanced in $\uparrow_{\mathcal B}\! T$. 
\end{lemma}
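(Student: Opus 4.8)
The plan is to exploit the fact that, because $n\notin Y$, every box of $\mathcal B$ lies in a column strictly to the left of column $n$; hence $\uparrow_{\mathcal B}\!T$ and $T$ have identical column $n$. Writing $T^\dd=\uparrow_{\mathcal B}\!T$ and invoking \cref{lem:induction}, I would first record exactly how $T^\dd$ differs from $T$: the increment of $(\xx,\yy)$ is pure and raises that entry from $p_0$ to $q_0$, while for each $y\in Y$ the increment of $(\xx,y)$ is a trade increment swapping the $p_0$ at $(\xx,y)$ with the $q_0$ at $(\yy,y)$. Thus $T$ and $T^\dd$ agree except that $T^\dd(\xx,\yy)=q_0$ and, for $y\in Y$, $T^\dd(\xx,y)=q_0$ and $T^\dd(\yy,y)=p_0$. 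Since a $\Le$-shape meeting column $n$ has the form $\Le(i,j,n)$, whose only box outside column $n$ is $(i,j)$, such a shape can change only when $(i,j)$ is one of these altered boxes. This reduces the lemma to three cases: (a) $(i,j)=(\xx,\yy)$; (b) $(i,j)=(\xx,y)$ with $y\in Y$; (c) $(i,j)=(\yy,y)$ with $y\in Y$.

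Next I would set $\alpha=T(\xx,n)$, $\beta=T(\yy,n)$, and $\gamma_y=T(y,n)$ (all unchanged in $T^\dd$) and assemble the standing facts: \cref{lem:notinrow} gives $\alpha,\beta\notin[p_0+1,q_0-1]$, the hypothesis $n\notin Y$ says that $\alpha=p_0$ forces $\beta\le p_0$, and column $n$ is column-injective. Cases (a) and (b) can be unified by a short monotonicity claim: if $\alpha\notin[p_0+1,q_0-1]$, if $\alpha$ lies weakly between $p_0$ and some value $v$, and if $\alpha=p_0$ implies $v\le p_0$, then $\alpha$ lies weakly between $q_0$ and $v$. In case (a) one applies this with $v=\beta$, where the required implication is exactly $n\notin Y$; in case (b) one applies it with $v=\gamma_y$, deducing $\gamma_y\le\beta\le p_0$ when $\alpha=p_0$ from the balance of $\Le(\yy,y,n)$ in $T$ together with $n\notin Y$.

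The main obstacle is case (c), the shape $\Le(\yy,y,n)$, where balance of $\beta$ between $q_0$ and $\gamma_y$ in $T$ must be upgraded to balance of $\beta$ between $p_0$ and $\gamma_y$ in $T^\dd$. Splitting on $\beta\le p_0$ versus $\beta\ge q_0$ (legitimate since $\beta\notin[p_0+1,q_0-1]$), the first subcase is routine. The delicate subcase is $\beta\ge q_0$: here I must rule out $\gamma_y<q_0$, since that alone would make $\Le(\yy,y,n)$ unbalanced in $T^\dd$. Assuming $\gamma_y<q_0$ forces $\beta=q_0$ via the balance of $\Le(\yy,y,n)$ in $T$; then the balance of $\Le(\xx,\yy,n)$ in $T$ together with $\alpha\notin[p_0+1,q_0-1]$ pins $\alpha$ to $\{p_0,q_0\}$, and both values are impossible, as $\alpha=p_0$ would place $n$ in $Y$ while $\alpha=q_0=\beta$ would violate column-injectivity of column $n$. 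This three-way collision of the two balance conditions, the residue restriction from \cref{lem:notinrow}, and injectivity is the crux; once $\gamma_y\ge q_0$ is forced, balance in $T^\dd$ follows immediately.
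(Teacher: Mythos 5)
Your proposal is correct and follows essentially the same route as the paper's proof: both reduce to the shapes $\Le(i,j,n)$ with $(i,j)$ among the boxes altered by the increments, and both rely on the same ingredients (\cref{lem:induction}, the exclusion of $[p_0+1,q_0-1]$ from rows $\xx$ and $\yy$ via \cref{lem:notinrow}, the hypothesis $n\notin Y$, column-injectivity, and the balance of the auxiliary shapes $\Le(\yy,y,n)$ and $\Le(\xx,\yy,n)$). The only difference is organizational — your monotonicity claim compresses the paper's Cases 1, 2, 3, 5 for $i=\xx$ into one step, and your case (c) analysis merges its Cases 1, 2, 4, 6 for $i=\yy$ — but the underlying argument is the same.
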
 

\begin{proof}
Let $T^\dd=\uparrow_{\mathcal B}\!T$. It follows from \cref{lem:induction} that the set of boxes where $T$ and $T^\dd$ disagree is $\mathcal B\cup\{(\yy,y):y\in Y\}$. Moreover, we have $T(\xx,\yy)=p_0$ and $T^\dd(\xx,\yy)=q_0$, and for every $y\in Y$, we have $T(\xx,y)=T^\dd(\yy,y)=p_0$ and $T(\yy,y)=T^\dd(\xx,y)=q_0$.  
Thus, let us fix $1\leq i<j<n$; we will show that $\Le(i,j,n)$ is balanced in $T^\dd$. If $i\not\in\{\xx,\yy\}$ or $j\not\in \{\yy\}\cup Y$, then this is immediate because $\Le(i,j,n)$ is balanced in $T$. Therefore, we may assume $i\in\{\xx,\yy\}$ and $j\in\{\yy\}\cup Y$. This implies that $\{T(i,j),T^\dd(i,j)\}=\{p_0,q_0\}$. We also know by \cref{lem:notinrow} that $T(i,n)\not\in[p_0+1,q_0-1]$. 

\medskip 

\noindent {\bf Case 1.} Suppose $T(i,n)<p_0$. Since $\Le(i,j,n)$ is balanced in $T$ and $T(i,j)\in\{p_0,q_0\}$, we have $T(j,n)<T(i,n)$. Then $T^\dd(j,n)<T^\dd(i,n)<T^\dd(i,j)$, so $\Le(i,j,n)$ is balanced in $T^\dd$. 

\medskip 

\noindent {\bf Case 2.} Suppose $T(i,n)>q_0$. Since $\Le(i,j,n)$ is balanced in $T$ and $T(i,j)\in\{p_0,q_0\}$, we have $T(j,n)>T(i,n)$. Then $T^\dd(j,n)>T^\dd(i,n)>T^\dd(i,j)$, so $\Le(i,j,n)$ is balanced in~$T^\dd$. 

\medskip 

\noindent {\bf Case 3.} Suppose $i=\xx$ and $T(i,n)=q_0$. Then $T^\dd(i,j)=T^\dd(i,n)=q_0$, so $\Le(i,j,n)$ is balanced in~$T^\dd$. 

\medskip 

\noindent {\bf Case 4.} Suppose $i=\yy$ and $T(i,n)=p_0$. Then $T^\dd(i,j)=T^\dd(i,n)=p_0$, so $\Le(i,j,n)$ is balanced in~$T^\dd$. 

\medskip 

\noindent {\bf Case 5.} Suppose $i=\xx$ and $T(i,n)=p_0$. Because $n\not\in Y$ and $T(\xx,n)=p_0$, we must have $T(\yy,n)<p_0$. Thus, $T(\yy,j)=q_0>T(\yy,n)$. Since $\Le(\yy,j,n)$ is balanced in $T$, we must have $T(j,n)<T(\yy,n)<p_0$. It follows that $T^\dd(j,n)<p_0=T^\dd(i,n)<q_0=T^\dd(i,j)$, so $\Le(i,j,n)$ is balanced in $T^\dd$. 

\medskip 

\noindent {\bf Case 6.} Suppose $i=\yy$ and $T(i,n)=q_0$. Then $T(\xx,\yy)=p_0$ and $T(\xx,n)\neq T(\yy,n)=q_0$. Since $\Le(\xx,\yy,n)$ is balanced in $T$, we must have $p_0\leq T(\xx,n)<q_0$. We know by \cref{lem:notinrow} that $T(\xx,n)\not\in[p_0+1,q_0-1]$, so $T(\xx,n)=p_0$. This shows that $T(\xx,n)=p_0<T(\yy,n)$, which contradicts the hypothesis that $n\not\in Y$. Hence, this case cannot happen. 
\end{proof} 

We have now proven \eqref{II'}, \eqref{III'}, \eqref{IV'}. This completes the proof of \cref{prop:construct}. 

\subsection{Proving $\Phi$ is an Isomorphism} 

We now have the tools needed to prove that the map $\Phi\colon\PD(w)\to\LT(w)$ is a poset isomorphism. Suppose $T,T'\in\IT(w)$ are inversions tableaux for a permutation $w\in\SSS_n$. Assume that $\Lambda(T)<\Lambda(T')$; equivalently, $T'=\uparrow_{\mathcal M}\!T$ for some nonempty multiset $\mathcal M$ of boxes. Let $P=\Theta^{-1}(T)$ and $P'=\Theta^{-1}(T')$. Our goal is to show that we can obtain $P'$ from $P$ via a sequence of chute moves. Proceeding by induction on the size of $\mathcal M$, we will see that it suffices to find a set $\mathcal B\subseteq \mathcal M$ such that $\uparrow_{\mathcal B}\!T$ is an inversions tableau and such that $\Theta^{-1}(\uparrow_{\mathcal B}\!T)=\C_{i,j}(P)$ for some box $(i,j)$. \cref{prop:construct} provides a way to do this (by \cref{prop:chute}). However, \cref{prop:construct} has a crucial hypothesis that the set $\mathcal X$ (defined in the statement of that proposition) is nonempty. Our objective is to show that $\mathcal X$ is indeed nonempty.   

\begin{lemma}\label{lem:isomorphism1}
Let $w\in\SSS_n$, and let $T,T'\in\IT(w)$ be such that $T'=\uparrow_{\mathcal M}\!T$, where $\mathcal M$ is a multiset consisting of exactly one box of the form $(\xx,n)$ with some multiplicity $m\geq 1$. Then $\uparrow_{(\xx,n)}\!T$ is an inversions tableau. 
\end{lemma}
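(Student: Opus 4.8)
The plan is to verify the three defining conditions of an inversions tableau for $S:=\uparrow_{(\xx,n)}\!T$: column-injectivity, the row bound, and balancedness. Column-injectivity is immediate, since incrementing always outputs a column-injective tableau. Write $p_0=T(\xx,n)$ and $q_0=S(\xx,n)$ for the entry before and after the increment, so $p_0<q_0$. Because all increments taking $T$ to $T'$ occur in column $n$, the tableaux $T$ and $T'$ agree outside column $n$; since $T,T'\in\IT(w)$ and $T'=\uparrow_{\mathcal M}\!T$ with $\mathcal M$ supported in column $n$, \cref{lem:bigger_after} applies and gives $T\leq T'$ entrywise. Set $v=T'(\xx,n)$, so that $q_0\leq v$ (each increment at $(\xx,n)$ strictly raises its entry) and $v\leq\xx$ (the row bound for $T'$).

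The crux is to pin down how $T'$ differs from $T$ and to rule out a trade increment. I would use the following monotonicity observation: a pure increment at $(\xx,n)$ changes no other box, while a trade increment at $(\xx,n)$ swaps its entry with a strictly larger entry lying in a box of column $n$ strictly above row $\xx$, thereby strictly decreasing that partner box. Hence each increment leaves every box $(p,n)$ with $p\neq\xx$ either unchanged or strictly smaller, and such a box is never increased again. Consequently $T'(p,n)\leq T(p,n)$ for all $p\neq\xx$, which together with $T\leq T'$ forces $T'(p,n)=T(p,n)$ for every $p\neq\xx$. Since each such box is non-increasing throughout the process yet has equal initial and final entries, it is never strictly decreased; hence no trade increment ever occurs, and in particular the first increment is pure. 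Therefore $S$ agrees with $T$ everywhere except at $(\xx,n)$, where its entry is $q_0$.

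With this in hand, balancedness of $S$ follows from a short convexity argument. Since $S$, $T$, and $T'$ all agree except at the single box $(\xx,n)$, any $\Le$-shape not containing $(\xx,n)$ is balanced in $S$ because it is balanced in $T$. For an $\Le$-shape $\Le(i,j,n)$ containing $(\xx,n)$ (so $(\xx,n)$ is either the box $(i,n)$ or the box $(j,n)$), all entries other than $(\xx,n)$ coincide in $T$, $S$, and $T'$. Viewing the balancedness condition as a constraint on the lone entry at $(\xx,n)$, its set of admissible values is an interval: if $(\xx,n)$ is the middle box $(i,n)$, the condition directly says the entry lies in a fixed interval, and if $(\xx,n)$ is the upper box $(j,n)$, the condition (that the fixed entry at $(i,n)$ lies weakly between the fixed entry at $(i,j)$ and the entry at $(\xx,n)$) cuts out a ray. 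Because $p_0$ (realized by $T$) and $v$ (realized by $T'$) are admissible and $p_0\leq q_0\leq v$, the value $q_0$ is admissible too; hence $\Le(i,j,n)$ is balanced in $S$. This shows $S$ is balanced.

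Finally, the row bound holds because $S(\xx,n)=q_0\leq v\leq\xx$ and every other entry of $S$ equals the corresponding entry of $T$, which already satisfies the row bound. Combining column-injectivity, balancedness, and the row bound shows $S\in\IT(w)$. I expect the main obstacle to be the pin-down step of the second paragraph: the key realization is that \cref{lem:bigger_after} (a global comparison) combined with the one-directional monotonicity of increments is exactly strong enough to force $T$ and $T'$ to coincide off $(\xx,n)$ and to forbid trade increments, after which balancedness reduces to the elementary interval observation rather than the delicate $\Le$-shape bookkeeping used elsewhere in this section.
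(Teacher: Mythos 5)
Your proof is correct and follows essentially the same route as the paper's: both use \cref{lem:bigger_after} to force all $m$ increments at $(\xx,n)$ to be pure (so that $T$, $\uparrow_{(\xx,n)}\!T$, and $T'$ agree off that box), and then both verify balancedness of the $\Le$-shapes through $(\xx,n)$ by observing that the admissible values for that entry form a convex set containing $T(\xx,n)$ and $T'(\xx,n)$, hence also the intermediate value $q_0$. Your convexity framing is just a compact repackaging of the paper's two-case check of $\Le(r,\xx,n)$ and $\Le(\xx,s,n)$.
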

\begin{proof}
Let $T^\dd=\uparrow_{(\xx,n)}\!T$. We know by \cref{lem:bigger_after} that $T(i,j)\leq T'(i,j)$ for all $(i,j)\in\LRT_{n-1}$. Therefore, as we increment the box $(\xx,n)$ $m$ times to transform $T$ into $T'$, no box can ever have its entry decrease (otherwise, it would not be able to increase again). It follows that all of these increments are pure increments. Moreover, $T(i,j)=T^\dd(i,j)=T'(i,j)$ for every box $(i,j)$ other than $(\xx,n)$. Each $\Le$-shape that does not contain $(\xx,n)$ has the same entries in $T^\dd$ as in $T$, so it is balanced in $T^\dd$. 

Consider a $\Le$-shape of the form $\Le(r,\xx,n)$. This $\Le$-shape is balanced in $T$ and $T'$. Therefore, $T(r,n)$ is weakly between $T(r,\xx)$ and $T(\xx,n)$, and it is also weakly between $T'(r,\xx)$ and $T'(\xx,n)$. Since $T(\xx,n)<T^\dd(\xx,n)\leq T'(\xx,n)$, we deduce that $\Le(r,\xx,n)$ is balanced in $T^\dd$. 

Finally, consider a $\Le$-shape of the form $\Le(\xx,s,n)$. This $\Le$-shape is balanced in $T$ and $T'$, so $T(\xx,n)$ and $T'(\xx,n)$ are both weakly between the number $T(\xx,s)=T^\dd(\xx,s)=T'(\xx,s)$ and the number $T(s,n)=T^\dd(s,n)=T'(s,n)$. Since $T(\xx,n)<T^\dd(\xx,n)\leq T'(\xx,n)$, we deduce that $\Le(\xx,s,n)$ is balanced in $T^\dd$. 
\end{proof} 

\begin{lemma}\label{lem:isomorphism2}
Let $w\in\SSS_n$, and let $T,T'\in\IT(w)$ be such that $T'=\uparrow_{\mathcal M}\!T$, where $\mathcal M$ is a nonempty multiset of boxes contained in a single row. Then $\Theta^{-1}(T)\leqC\Theta^{-1}(T')$. 
\end{lemma}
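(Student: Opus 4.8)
The plan is to argue by induction on the size $|\mathcal M|$ (the sum of the multiplicities of its boxes). All boxes of $\mathcal M$ lie in a single row, say row $i$. Writing $P=\Theta^{-1}(T)$ and $P'=\Theta^{-1}(T')$, I would peel off one chute move at the bottom: I want to produce a subset $\mathcal B\subseteq\mathcal M$ with $\uparrow_{\mathcal B}\!T\in\IT(w)$ and $\Theta^{-1}(\uparrow_{\mathcal B}\!T)=\C_{\xx,\yy}(P)$ for a suitable box $(\xx,\yy)$. Since increments commute, we then have $T'=\uparrow_{\mathcal M\setminus\mathcal B}(\uparrow_{\mathcal B}\!T)$, where $\mathcal M\setminus\mathcal B$ is still contained in row $i$ and is strictly smaller (because $\mathcal B$ contains $(\xx,\yy)$). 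Applying the inductive hypothesis to the inversions tableaux $\uparrow_{\mathcal B}\!T$ and $T'$ and using transitivity of $\leqC$ then finishes the argument (if $\mathcal M\setminus\mathcal B$ happens to be empty, then $\uparrow_{\mathcal B}\!T=T'$ and we are already done). The machine that produces $\mathcal B$ is \cref{prop:construct}, whose only nontrivial hypothesis is that the set $\mathcal X$ defined there is nonempty. Hence the whole proof reduces to verifying that $\mathcal X\neq\emptyset$.

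To show $\mathcal X\neq\emptyset$, the key observation is to single out the \emph{leftmost} column occurring in $\mathcal M$. Let $c$ be the smallest column index with $(i,c)\in\mathcal M$. Because $c$ is leftmost, the only boxes of $\mathcal M$ lying in columns $\leq c$ are copies of $(i,c)$; call this multiset $\mathcal M_0$. Since incrementing a box in a column $c'$ only alters column $c'$ and depends only on the entries of that column, restriction to $\LRT_{c-1}$ commutes with the relevant increments, giving $T'_{\leq c}=\uparrow_{\mathcal M_0}\!T_{\leq c}$. Now $T_{\leq c}$ and $T'_{\leq c}$ are inversions tableaux for the permutation $\widehat w\in\SSS_c$ obtained by deleting the numbers exceeding $c$, the multiset $\mathcal M_0$ consists of copies of the single box $(i,c)$, and $(i,c)$ sits in the \emph{last} column $c$ of these restricted staircases. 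Thus \cref{lem:isomorphism1}, applied with $c$ in the role of $n$ and $i$ in the role of $\xx$, tells us that $\uparrow_{(i,c)}\!T_{\leq c}$ is an inversions tableau. This is precisely the statement that $(i,c)\in\mathcal X$, so $\mathcal X\neq\emptyset$.

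With $\mathcal X\neq\emptyset$ in hand, \cref{prop:construct} supplies a box $(\xx,\yy)\in\mathcal X$ (necessarily with $\xx=i$, as $\mathcal X\subseteq\mathcal M$) together with the set $\mathcal B=\{(\xx,\yy)\}\cup\{(\xx,y):y\in Y\}\subseteq\mathcal M$ for which $\uparrow_{\mathcal B}\!T\in\IT(w)$, and its conclusions match the three bullet-point conditions characterizing chute moves in \cref{prop:chute}. Indeed, in the notation of \cref{prop:construct}, the increment of each box $(\xx,y)$ with $y\in Y$ raises $T(\xx,y)=p_0$ to $q_0$; since $q_0$ already occupies row $\yy$ of column $y$, this is a trade increment that swaps $p_0$ and $q_0$ between rows $\xx$ and $\yy$, so $(\uparrow_{\mathcal B}\!T)(\yy,y)=p_0$ and $(\uparrow_{\mathcal B}\!T)(\xx,y)=q_0$, and no box in column $\yy$ has entry $q_0$ in $T$. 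Therefore $\Theta^{-1}(\uparrow_{\mathcal B}\!T)=\C_{\xx,\yy}(P)$, giving $P\leqC\Theta^{-1}(\uparrow_{\mathcal B}\!T)$, and the induction closes as described above. The genuinely delicate point is the reduction to $\mathcal X\neq\emptyset$ combined with the leftmost-column choice: taking the leftmost column $c$ is exactly what forces the restriction $T'_{\leq c}$ to differ from $T_{\leq c}$ by incrementing a \emph{single} box in the last column, matching the hypothesis of \cref{lem:isomorphism1}; any other choice of column would leave residual increments in earlier columns and destroy the reduction.
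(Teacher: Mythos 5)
Your proposal is correct and follows essentially the same route as the paper's proof: induction on the size of $\mathcal M$, reduction to showing the set $\mathcal X$ from \cref{prop:construct} is nonempty, and establishing this by restricting to the leftmost column $c$ of $\mathcal M$ so that \cref{lem:isomorphism1} applies to $T_{\leq c}$ and $T'_{\leq c}$ with $c$ playing the role of $n$. The only cosmetic difference is that you spell out the verification of the chute-move conditions from \cref{prop:chute}, which the paper simply cites.
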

\begin{proof}
The proof is trivial if $\mathcal M$ is empty, so we may assume $\mathcal M$ is nonempty and proceed by induction on the sum of the multiplicities of the elements of $\mathcal M$. Let $\mathcal X$ be the set of boxes $(i,j)$ in $\mathcal M$ such that $\uparrow_{(i,j)}\!T_{\leq j}$ is an inversions tableau. Let $d$ be the leftmost column containing a box in $\mathcal M$, and let $\mathcal M_d$ be the multiset of boxes obtained from $\mathcal M$ by deleting all boxes not in column $d$. Then $\mathcal M_d$ contains only one box $(i,d)$ (with some multiplicity). Moreover, $T_{\leq d}$ and $T'_{\leq d}$ are inversions tableaux, and $T'_{\leq d}=\uparrow_{\mathcal M_d}\!T_{\leq d}$. It follows from \cref{lem:isomorphism1} that $\uparrow_{(i,d)}\!T_{\leq d}\in\mathcal X$, so $\mathcal X$ is nonempty. Therefore, we can choose $(\xx,\yy)\in\mathcal X$ such that no other box in $\mathcal X$ appears to the right of $(\xx,\yy)$. Let $Y$ and $\mathcal B$ be as in \cref{prop:construct}. We know by \cref{prop:construct} that $\mathcal B\subseteq\mathcal M$ and that $\uparrow_{\mathcal B}\!T$ is an inversions tableau. Let $\mathcal M'$ be the multiset obtained from $\mathcal M$ by decreasing the multiplicity of each element of $\mathcal B$ by $1$. Then $\uparrow_{\mathcal M'}\!(\uparrow_{\mathcal B}\!T)=T'$, so we know by induction that $\Theta^{-1}(\uparrow_{\mathcal B}\!T)\leqC\Theta^{-1}(T')$. It follows from \cref{prop:chute,prop:construct} that $\Theta^{-1}(\uparrow_{\mathcal B}\!T)=\C_{\xx,\yy}(\Theta^{-1}(T))$, so $\Theta^{-1}(T)\leqC\Theta^{-1}(\uparrow_{\mathcal B}\!T)\leqC\Theta^{-1}(T')$.  
\end{proof} 

\begin{lemma}\label{lem:isomorphism3}
Let $w\in\SSS_n$, and let $T,T'\in\IT(w)$ be such that $T'=\uparrow_{\mathcal M}\!T$, where $\mathcal M$ is a nonempty multiset of boxes contained in column $n$. Then there exists a box $(\xx,n)$ in $\mathcal M$ such that $\uparrow_{(\xx,n)}\!T$ is an inversions tableau.  
\end{lemma}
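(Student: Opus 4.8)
The plan is to reduce the statement entirely to a balance question in column $n$, isolate a clean necessary condition on which boxes can possibly work, and then locate a box of $\mathcal M$ meeting it. Since $\mathcal M$ lies in column $n$, the tableaux $T$ and $T'$ agree outside column $n$, and \cref{lem:bigger_after} tells us that every entry of column $n$ weakly increases from $T$ to $T'$. For any box $(\xx,n)\in\mathcal M$, the tableau $\uparrow_{(\xx,n)}\!T$ is automatically column-injective and agrees with $T$ outside column $n$, so it is an inversions tableau precisely when (i) its entries still respect the row bounds and (ii) every $\Le$-shape meeting column $n$ is balanced. Thus the whole problem is to choose $(\xx,n)$ so that incrementing it keeps column $n$ balanced; the row bound comes along cheaply, since when no box of $\mathcal M$ lies strictly below $(\xx,n)$ the entries below $(\xx,n)$ are identical in $T$ and $T'$, forcing $(\uparrow_{(\xx,n)}\!T)(\xx,n)\leq T'(\xx,n)\leq\xx$.

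First I would record the key dichotomy. Writing $a=T(\xx,n)$ and letting $b$ be the value produced by the increment, I claim a \emph{trade} increment can never yield an inversions tableau: if the trade swaps $(\xx,n)$ with a box $(x',n)$ for some $x'>\xx$ (so $T(x',n)=b>a$), then balance of $\Le(\xx,x',n)$ in $T$ forces $T(\xx,x')\leq a$, and after the swap the corner $(\xx,n)=b$ strictly exceeds both $T(\xx,x')$ and the new entry $a$ at $(x',n)$, so $\Le(\xx,x',n)$ becomes unbalanced. Consequently the box we seek must admit a \emph{pure} increment, in which only one entry of column $n$ changes. This cuts the balance bookkeeping down to the two families $\Le(r,\xx,n)$ with $r<\xx$ and $\Le(\xx,s,n)$ with $s>\xx$.

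For a pure increment at a well-chosen box I would verify these two families separately. For $\Le(r,\xx,n)$, using that rows below $\xx$ are unchanged in both $T$ and $T'$, the chain $a\leq b\leq T'(\xx,n)$, and the balance of both $T$ and $T'$, one sandwiches $T(r,n)$ between $T(r,\xx)$ and $b$ in every case (the awkward case, where $T(r,n)$ would fall on the wrong side of $b$, is exactly the one in which balance of $T'$ forces $T(r,n)=T(r,\xx)$). For $\Le(\xx,s,n)$ the mechanism is different: because every integer in $[a+1,b-1]$ appears strictly below $(\xx,n)$ in $T$, column-injectivity keeps $T(s,n)$ out of the interval $[a,b-1]$, which is precisely what is needed to keep $b$ weakly between $T(\xx,s)$ and $T(s,n)$. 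The genuine difficulty is that these two verifications pull the choice of box in opposite directions: the first wants $(\xx,n)$ to be low in $\mathcal M$ (with nothing of $\mathcal M$ below it), whereas the second can fail for a low box precisely when the new value $b$ overshoots some $T(\xx,s)$ with $s>\xx$. I expect the main obstacle to be showing that some box of $\mathcal M$ reconciles both demands. The plan here is an extremal/descent argument: assuming every box of $\mathcal M$ fails, each failure of the second type should exhibit a box $(s,n)\in\mathcal M$ lying above it with strictly larger entry, producing an impossible infinite ascent inside the finite column $n$; the base case of at most one box is exactly \cref{lem:isomorphism1}. An alternative route worth trying is to transpose: by \cref{prop:transpose} the hypotheses convert a column-$n$ problem for $w$ into a single-\emph{row} problem for $w^{-1}$, where \cref{lem:isomorphism2} already applies.
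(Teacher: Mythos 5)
Your main line of attack has a genuine gap, and it sits exactly where you flag it. Choosing $(\xx,n)$ to be the lowest box of $\mathcal M$ does handle the row bound and the shapes $\Le(r,\xx,n)$ with $r<\xx$, as you sketch, but the shapes $\Le(\xx,s,n)$ can genuinely fail: writing $a=T(\xx,n)$ and $b$ for the incremented value, the bad configuration is $T(s,n)<a$ with $a\leq T(\xx,s)<b$, where balance in $T$ holds but $b$ overshoots $T(\xx,s)$. Your proposed repair---a descent yielding an ``impossible infinite ascent''---does not close this. What one can actually extract from such a failure (balance of $\Le(\xx,s,n)$ in $T'$ forces $T'(s,n)\geq T'(\xx,n)\geq b>a>T(s,n)$, and then the ``moreover'' clause of \cref{lem:Colinjective} forces $(s,n)\in\mathcal M$) is a box of $\mathcal M$ above $(\xx,n)$ whose entry in $T$ is strictly \emph{smaller}, not larger; more importantly, the implication ``this box fails, hence some box of $\mathcal M$ above it is a candidate'' was derived only for the lowest box of $\mathcal M$, using that the entries below it agree in $T$ and $T'$. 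For a higher box both of your verifications break down (the value $b$ computed in $T$ need not be what the increments actually produce once boxes of $\mathcal M$ sit below it), so there is no iteration to run and no contradiction from the finiteness of column $n$. As written, the argument does not go through.

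Your closing parenthetical, however, is precisely the route the paper takes: from $\Phi(P)\leq\Phi(P')$ agreeing outside column $n$, \cref{prop:transpose} gives $\Phi((P')^\top)\leq\Phi(P^\top)$ agreeing outside a single row, \cref{lem:isomorphism2} gives $(P')^\top\leqC P^\top$, and the anti-isomorphism gives $P\leqC P'$. But this alone is not the conclusion of the lemma: one still needs to take the first chute move $\C_{\xx,\yy}$ in a saturated chain from $P$ to $P'$, observe via \cref{prop:chute} that it increments a set $\mathcal B\ni(\xx,\yy)$ of boxes in a single row, and then use \cref{cor:easy_direction} to sandwich $\Phi(\C_{\xx,\yy}(P))$ between $\Phi(P)$ and $\Phi(P')$, which forces $\mathcal B$ to lie in column $n$ and hence $\mathcal B=\{(\xx,n)\}$ with $(\xx,n)\in\mathcal M$. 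That last step is short but essential, and your one-sentence pointer omits it. If you promote the transpose idea from an afterthought to the main argument and supply this final step, you recover a complete proof; the direct column-$n$ analysis should be abandoned or substantially reinforced.
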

\begin{proof}
Let $P=\Theta^{-1}(T)$ and $P'=\Theta^{-1}(T')$. The Lehmer tableaux $\Phi(P)$ and $\Phi(P')$ agree in all columns other than column $n$, and $\Phi(P)\leq\Phi(P')$. According to \cref{prop:transpose}, the Lehmer tableaux $\Phi(P^\top)$ and $\Phi((P')^\top)$ agree in all rows other than row $w^{-1}(n)$, and $\Phi((P')^\top)\leq\Phi(P^\top)$. We know by \cref{lem:isomorphism2} that $(P')^\top\leqC P^\top$. Since the map sending a reduced pipe dream to its transpose is an anti-isomorphism from $\PD(w)$ to $\PD(w^{-1})$, we deduce that $P\leqC P'$. This implies that there is some pair $(\xx,\yy)$ such that $P\leqC\C_{\xx,\yy}(P)\leqC P'$. According to \cref{prop:chute}, we have $\Theta(\C_{\xx,\yy}(P))=\uparrow_{\mathcal B}\!T$ for some set $\mathcal B$ of boxes that contains $(\xx,\yy)$ and is contained in a single row. \cref{cor:easy_direction} tells us that $\Phi(P)\leq\Phi(\C_{\xx,\yy}(P))\leq\Phi(P')$, so $\Phi(P)$ and $\Phi(\C_{\xx,\yy}(P))$ agree in all columns other than column $n$. This implies that $T$ and $\uparrow_{\mathcal B}\!T$ agree in all columns other than column $n$, so $\mathcal B=\{(\xx,\yy)\}=\{(\xx,n)\}$. Hence, $\uparrow_{(\xx,n)}\!T$ is an inversions tableau.  
\end{proof} 

The proof of the next lemma is essentially the same as that of \cref{lem:isomorphism2}, except now we use \cref{lem:isomorphism3} instead of \cref{lem:isomorphism1}. 

\begin{lemma}\label{lem:isomorphism4}
Let $w\in\SSS_n$, and let $T,T'\in\IT(w)$ be such that $T'=\uparrow_{\mathcal M}\!T$, where $\mathcal M$ is a nonempty multiset of boxes. Then $\Theta^{-1}(T)\leqC\Theta^{-1}(T')$. 
\end{lemma}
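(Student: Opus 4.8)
The plan is to follow the proof of \cref{lem:isomorphism2} almost verbatim, replacing its single appeal to \cref{lem:isomorphism1} with \cref{lem:isomorphism3}. I would induct on the total multiplicity of $\mathcal M$, with the empty case being trivial. In the inductive step, I would introduce the set $\mathcal X$ of boxes $(i,j)\in\mathcal M$ such that $\uparrow_{(i,j)}\!T_{\leq j}$ is an inversions tableau, exactly as in \cref{prop:construct}. The crux of the whole argument is to verify that $\mathcal X$ is nonempty; everything afterward is the same bookkeeping as in \cref{lem:isomorphism2}.

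To show $\mathcal X\neq\emptyset$, I would let $d$ be the leftmost column containing a box of $\mathcal M$ and let $\mathcal M_d$ be the multiset obtained from $\mathcal M$ by discarding every box not in column $d$. Since incrementing a box alters only its own column and no box of $\mathcal M$ lies in a column to the left of $d$, the restricted tableaux $T_{\leq d}$ and $T'_{\leq d}$ are inversions tableaux for the restricted permutation $\widehat w$ and satisfy $T'_{\leq d}=\uparrow_{\mathcal M_d}\!T_{\leq d}$, where $\mathcal M_d$ is a nonempty multiset supported in the last column $d$ of $\LRT_{d-1}$. This is precisely the hypothesis of \cref{lem:isomorphism3}, which therefore produces a box $(\xx,d)\in\mathcal M_d$ with $\uparrow_{(\xx,d)}\!T_{\leq d}$ an inversions tableau; hence $(\xx,d)\in\mathcal X$. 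This is the single place where the argument of \cref{lem:isomorphism2} must be strengthened: there $\mathcal M_d$ was automatically one box (as $\mathcal M$ lay in a single row), so the easier \cref{lem:isomorphism1} sufficed, whereas here $\mathcal M_d$ may occupy several rows and the transpose-based \cref{lem:isomorphism3} is required.

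With $\mathcal X$ nonempty, I would choose $(\xx,\yy)\in\mathcal X$ so that no other box of $\mathcal X$ appears to the right of $(\xx,\yy)$ in the same row, which is exactly the selection demanded by \cref{prop:construct}. Applying that proposition yields a set $Y$ and a box collection $\mathcal B$ with $\mathcal B\subseteq\mathcal M$ such that $\uparrow_{\mathcal B}\!T$ is an inversions tableau, and moreover the conclusions \eqref{II}--\eqref{I} set up precisely the hypotheses of \cref{prop:chute}. Consequently $\Theta^{-1}(\uparrow_{\mathcal B}\!T)=\C_{\xx,\yy}(\Theta^{-1}(T))$, so $\Theta^{-1}(T)\leqC\Theta^{-1}(\uparrow_{\mathcal B}\!T)$.

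Finally, I would let $\mathcal M'$ be the multiset obtained from $\mathcal M$ by lowering the multiplicity of each element of $\mathcal B$ by one. Since $\mathcal B$ contains $(\xx,\yy)$, it is nonempty, so $\mathcal M'$ has strictly smaller total multiplicity and $\uparrow_{\mathcal M'}\!(\uparrow_{\mathcal B}\!T)=T'$. The inductive hypothesis then gives $\Theta^{-1}(\uparrow_{\mathcal B}\!T)\leqC\Theta^{-1}(T')$, and transitivity of $\leqC$ completes the chain $\Theta^{-1}(T)\leqC\Theta^{-1}(\uparrow_{\mathcal B}\!T)\leqC\Theta^{-1}(T')$. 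The only genuinely new content relative to \cref{lem:isomorphism2} is the nonemptiness of $\mathcal X$, and I expect that step---passing the general multiset in the leftmost column through \cref{lem:isomorphism3}---to be the main point to get right.
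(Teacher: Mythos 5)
Your proposal is correct and is essentially identical to the paper's proof: the paper likewise inducts on the total multiplicity of $\mathcal M$, establishes that $\mathcal X$ is nonempty by restricting to the leftmost column $d$ and applying \cref{lem:isomorphism3} to $T_{\leq d}$, $T'_{\leq d}$, and $\mathcal M_d$, and then runs the same \cref{prop:construct}/\cref{prop:chute} bookkeeping as in \cref{lem:isomorphism2}. The paper even explicitly remarks that its proof is the same as that of \cref{lem:isomorphism2} with \cref{lem:isomorphism3} substituted for \cref{lem:isomorphism1}.
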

\begin{proof}
The proof is trivial if $\mathcal M$ is empty, so we may assume $\mathcal M$ is nonempty and proceed by induction on the sum of the multiplicities of the elements of $\mathcal M$. Let $\mathcal X$ be the set of boxes $(i,j)$ in $\mathcal M$ such that $\uparrow_{(i,j)}\!T_{\leq j}$ is an inversions tableau. Let $d$ be the leftmost column containing a box in $\mathcal M$, and let $\mathcal M_d$ be the multiset of boxes obtained from $\mathcal M$ by deleting all boxes not in column $d$. Then $T_{\leq d}$ and $T'_{\leq d}$ are inversions tableaux, and $T'_{\leq d}=\uparrow_{\mathcal M_d}\!T_{\leq d}$. It follows from \cref{lem:isomorphism3} that $\mathcal X$ is nonempty, so we can choose $(\xx,\yy)\in\mathcal X$ such that no other box in $\mathcal X$ appears to the right of $(\xx,\yy)$. Let $Y$ and $\mathcal B$ be as in \cref{prop:construct}. We know by \cref{prop:construct} that $\mathcal B\subseteq\mathcal M$ and that $\uparrow_{\mathcal B}\!T$ is an inversions tableau. Let $\mathcal M'$ be the multiset obtained from $\mathcal M$ by decreasing the multiplicity of each element of $\mathcal B$ by $1$. Then $\uparrow_{\mathcal M'}\!(\uparrow_{\mathcal B}\!T)=T'$, so we know by induction that $\Theta^{-1}(\uparrow_{\mathcal B}\!T)\leqC\Theta^{-1}(T')$. It follows from \cref{prop:chute,prop:construct} that $\Theta^{-1}(\uparrow_{\mathcal B}\!T)=\C_{\xx,\yy}(\Theta^{-1}(T))$, so $\Theta^{-1}(T)\leqC\Theta^{-1}(\uparrow_{\mathcal B}\!T\leqC\Theta^{-1}(T'))$.  
\end{proof} 

\begin{theorem}\label{thm:isomorphism} 
Let $w\in\SSS_n$. The map $\Phi\colon\PD(w)\to\LT(w)$ is a poset isomorphism. 
\end{theorem}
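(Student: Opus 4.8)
The plan is to exploit the fact that $\Phi=\Lambda\circ\Theta$ is already known to be a bijection, so that proving it is a poset isomorphism amounts to establishing the equivalence $P\leqC P'\iff\Phi(P)\leq\Phi(P')$ for all $P,P'\in\PD(w)$. The forward implication is exactly \cref{cor:easy_direction}, so the entire burden falls on the reverse implication: if $\Phi(P)\leq\Phi(P')$, then $P\leqC P'$.

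For the reverse implication, the first step is to translate the componentwise inequality on Lehmer tableaux into the incrementing language of \cref{sec:tableaux}. Writing $T=\Theta(P)$ and $T'=\Theta(P')$, the hypothesis says $\Lambda(T)\leq\Lambda(T')$ entrywise. I would then define the multiset $\mathcal M$ of boxes whose multiplicity at each $(i,j)$ is the nonnegative integer $\Lambda(T')(i,j)-\Lambda(T)(i,j)$. By \cref{obs:incrementing_adding}, together with the remark that incrementing a multiset adds the corresponding multiplicities to the Lehmer form, the tableaux $\uparrow_{\mathcal M}\!T$ and $T'$ have identical Lehmer forms. Since $\Lambda$ is injective on column-injective tableaux for $w$ and both of these tableaux lie in that set, this forces $T'=\uparrow_{\mathcal M}\!T$.

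With this identification in hand, the conclusion is immediate from \cref{lem:isomorphism4}: that lemma guarantees $\Theta^{-1}(T)\leqC\Theta^{-1}(T')$, which is precisely $P\leqC P'$. Combined with \cref{cor:easy_direction}, this yields the desired equivalence, and since $\Phi$ is a bijection, it is therefore a poset isomorphism.

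I expect the theorem itself to be a short assembly rather than the locus of difficulty: all of the genuine work has already been carried out in the chain of results culminating in \cref{lem:isomorphism4}, where one manufactures an explicit sequence of chute moves realizing an arbitrary incrementing $T\to\uparrow_{\mathcal M}\!T$. The only point in the present argument requiring care is the translation step, namely verifying that componentwise domination of Lehmer forms is realized exactly by incrementing a nonnegative multiset of boxes; but this is a direct consequence of \cref{obs:incrementing_adding} together with the injectivity of $\Lambda$, so no further obstacle should arise.
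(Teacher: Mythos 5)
Your proposal is correct and follows essentially the same route as the paper: the forward implication is \cref{cor:easy_direction}, and the reverse implication reduces the componentwise inequality $\Phi(P)\leq\Phi(P')$ to the statement $T'=\uparrow_{\mathcal M}\!T$ (via \cref{obs:incrementing_adding} and the bijectivity of $\Lambda$) and then invokes \cref{lem:isomorphism4}. The only difference is that you spell out the translation step that the paper leaves implicit, which is a harmless and indeed welcome elaboration.
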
 
\begin{proof}
We know already that $\Phi$ is a bijection. Let $P,P'\in\PD(w)$. We know by \cref{cor:easy_direction} that if $P\leqC P'$, then $\Phi(P)\leq\Phi(P')$. 

Now suppose $\Phi(P)\leq \Phi(P')$. Let $T=\Theta(P)$ and $T'=\Theta(P')$. Then $T'=\uparrow_{\mathcal M}\!T$ for some multiset $\mathcal M$ of boxes, so \cref{lem:isomorphism4} tells us that $P\leqC P'$. 
\end{proof} 

\section{The Lattice Property}\label{sec:lattice} 
We will now complete the proof of \cref{thm:main}. To do so, we will use the following result of Bj\"orner, Edelman, and Ziegler. 

\begin{proposition}[{\cite{BEZ}}]\label{prop:BEZ}
Let $Q$ be a finite poset with a minimum element and a maximum
element. Suppose that for all distinct $\gamma_0,\gamma_1,\gamma_2\in Q$ satisfying $\gamma_0\lessdot\gamma_1$ and $\gamma_0\lessdot\gamma_2$, the elements
$\gamma_1$ and $\gamma_2$ have a least upper bound in $Q$. Then $Q$ is a lattice.
\end{proposition}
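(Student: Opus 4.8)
The plan is to deduce the full lattice property from the existence of joins alone. First I would observe that it suffices to prove that $Q$ is a \emph{join-semilattice}, i.e., that every pair of elements has a least upper bound. Indeed, once joins of pairs (hence, by iteration, of arbitrary finite subsets) are known to exist, the presence of the minimum element $\hat 0$ forces meets to exist as well: given $a,b\in Q$, the set $L=\{c\in Q:c\leq a\text{ and }c\leq b\}$ of common lower bounds is nonempty (it contains $\hat 0$) and finite, so $g=\bigvee L$ exists. Since $a$ and $b$ are both upper bounds of $L$, we get $g\leq a$ and $g\leq b$, so $g\in L$; and $g\geq\ell$ for every $\ell\in L$, so $g$ is the greatest lower bound $a\wedge b$. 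Thus the entire difficulty is concentrated in proving that pairwise joins exist, and the maximum element $\hat 1$ is used only to guarantee that the set of common upper bounds of any pair is nonempty.

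To prove that every pair has a join, I would argue by \textbf{downward induction on $x\in Q$} that any two elements of the principal filter $Q_{\geq x}=\{y\in Q:y\geq x\}$ have a join. A preliminary remark makes this clean: if $y,z\geq x'$, then every common upper bound of $y$ and $z$ already lies in $Q_{\geq x'}$, so a join computed inside the filter $Q_{\geq x'}$ is automatically a join in $Q$. The base case $x=\hat 1$ is trivial. For the inductive step I would fix incomparable $a,b\in Q_{\geq x}$ with $a,b>x$ (all other cases being immediate) and choose, along saturated chains, covers $a',b'$ of $x$ with $x\lessdot a'\leq a$ and $x\lessdot b'\leq b$. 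If $a'=b'$, then $a,b\in Q_{\geq a'}$ with $a'>x$, and the induction hypothesis supplies $a\vee b$.

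The heart of the argument is the case $a'\neq b'$, where the covering hypothesis of the proposition finally enters: since $a'$ and $b'$ are distinct elements both covering $x$, the join $c:=a'\vee b'$ exists. I would then build $a\vee b$ through a short cascade of joins, each justified by the induction hypothesis applied to a filter strictly above $x$: because $a,c\geq a'>x$ the join $p:=a\vee c$ exists; because $b,c\geq b'>x$ the join $q:=b\vee c$ exists; and because $p,q\geq c>x$ the join $r:=p\vee q$ exists. Finally I would verify that $r=a\vee b$. Clearly $r\geq p\geq a$ and $r\geq q\geq b$, so $r$ is a common upper bound. Conversely, if $s$ is any common upper bound of $a$ and $b$, then $s\geq a\geq a'$ and $s\geq b\geq b'$ force $s\geq c$, whence $s\geq a\vee c=p$ and $s\geq b\vee c=q$, and therefore $s\geq p\vee q=r$. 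Taking $x=\hat 0$ then yields pairwise joins throughout $Q$, completing the proof.

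The main obstacle I anticipate is not conceptual but organizational: one must check that each intermediate join $c,p,q,r$ is genuinely available from the induction hypothesis, which requires exhibiting, in every case, a common lower bound strictly greater than $x$ for the two elements being joined, and one must confirm that joins taken within the filters $Q_{\geq x'}$ coincide with joins in $Q$ (handled by the preliminary remark). The covering hypothesis is invoked exactly once — to produce $c=a'\vee b'$ — and that single application is what propagates, via the filter induction, to joins of arbitrary pairs.
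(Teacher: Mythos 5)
Your argument is correct. Note that the paper does not actually prove this proposition---it is imported verbatim from Bj\"orner--Edelman--Ziegler \cite{BEZ}---so there is no internal proof to compare against; what you have written is essentially the standard proof of the cited lemma. The two reductions are sound: meets are recovered from joins using $\hat 0$ (the join of the finite, nonempty set of common lower bounds is the meet), and joins are produced by downward Noetherian induction on the principal filters $Q_{\geq x}$, with the preliminary remark correctly identifying joins computed in a filter with joins in $Q$. The cascade $c=a'\vee b'$, $p=a\vee c$, $q=b\vee c$, $r=p\vee q$ is legitimate at each step because you exhibit a common lower bound strictly above $x$ (namely $a'$, $b'$, and $c$, respectively), and the verification that $r$ is the least upper bound of $a$ and $b$ is complete. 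One cosmetic point: your closing claim that $\hat 1$ serves to make the sets of common upper bounds nonempty is not actually used anywhere in your construction---your induction manufactures joins directly from the covering hypothesis, and $\hat 1$ enters only in the trivial (and in fact dispensable) base case---but this does not affect correctness.
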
 

Fix $w\in\SSS_n$. Rubey proved that $\PD(w)$ has a minimum element and a maximum element \cite[Theorem~3.8]{Rubey}. To apply \cref{prop:BEZ}, let us fix distinct pipe dreams ${P_0,P_1,P_2\in\PD(w)}$ such that $P_0\lessdot_{\C} P_1$ and $P_0\lessdot_{\C} P_2$ (if no such pipe dreams exist, then $\PD(w)$ vacuously satisfies the hypotheses of \cref{prop:BEZ}). For $\epsilon\in\{0,1,2\}$, let $T_\epsilon=\Theta(P_\epsilon)$ and $L_\epsilon=\Phi(P_\epsilon)=\Lambda(T_\epsilon)$. For each $\delta\in\{1,2\}$, there is an inversion $(i_\delta,j_\delta)$ of $w$ such that $P_\delta$ is obtained by applying the chute move $\C_{i_\delta,j_\delta}$ to $P_0$; that is, $P_\delta=\C_{i_\delta,j_\delta}(P_0)$. Let $R_\delta=\RR(P_0,P_\delta)$ be the rectangle where this chute move takes place. The corner boxes $\NW(R_\delta)$ and $\SW(R_\delta)$ are filled with bump tiles, the box $\SE(R_\delta)$ is filled with a bump or elbow tile, and all other boxes in $R_\delta$ are filled with cross tiles. Moreover, $P_\delta$ is obtained from $P_0$ by swapping the tiles in $\SW(R_\delta)$ and $\NE(R_\delta)$. 

The next lemma is illustrated in \cref{fig:commute}. 

\begin{figure}[ht]
  \begin{center}
  \includegraphics[width=\linewidth]{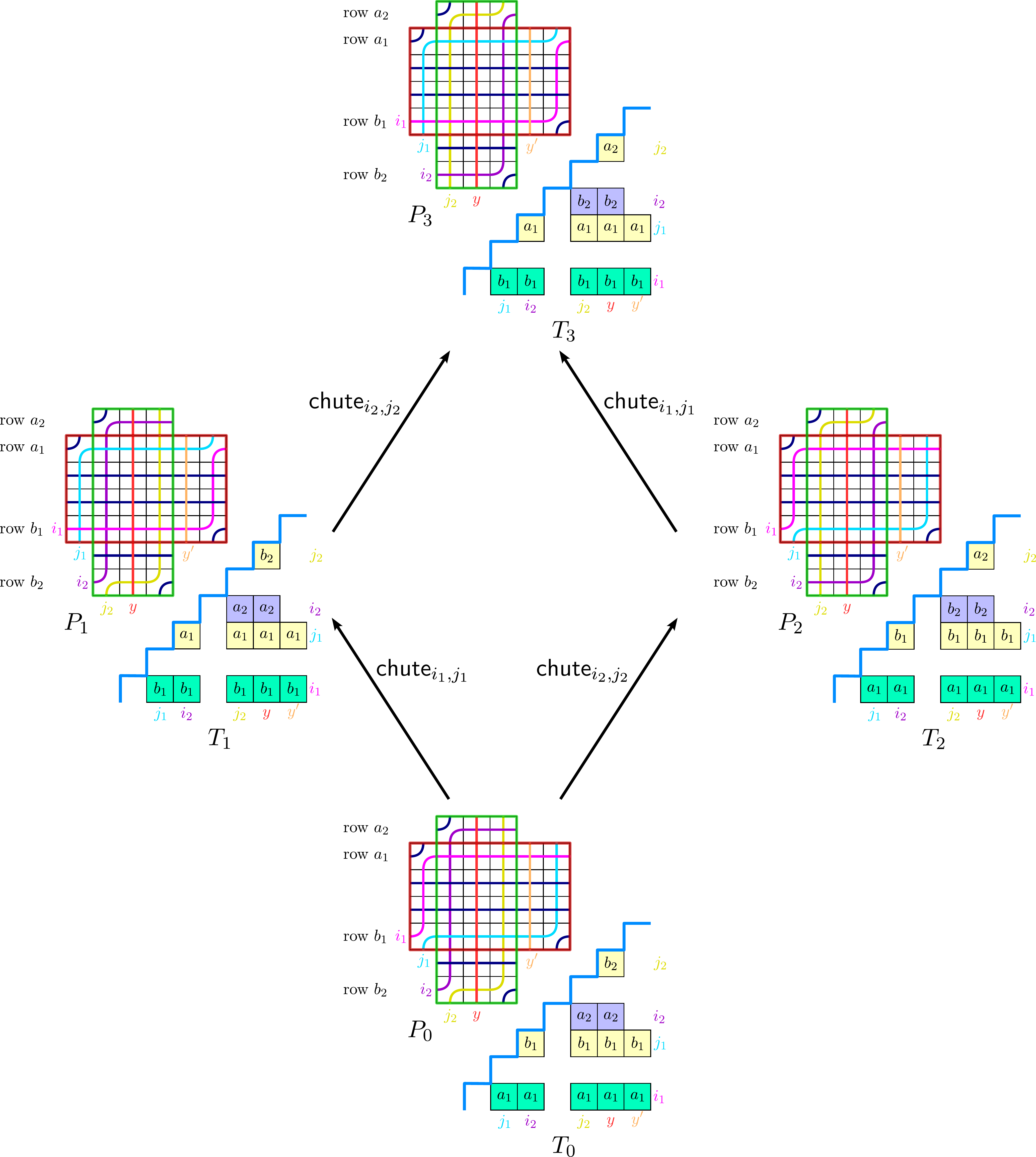}
  \end{center}
\caption{An illustration of \cref{lem:commute}. The rectangles $R_1$ and $R_2$ are outlined in {\color{MildRed}red} and {\color{MildGreen}green}, respectively. The sets $\mathcal B_1$ and $\mathcal B_2$ are colored in {\color{Turquoise}turquoise} and {\color{Periwinkle}periwinkle}, respectively. }\label{fig:commute} 
\end{figure}

\begin{lemma}\label{lem:commute} 
Suppose that $\SW(R_1)$ is not a corner of $R_2$ and that $\SW(R_2)$ is not a corner of $R_1$. Then $P_1$ and $P_2$ have a least upper bound $P_1\vee P_2$ in the chute move poset $\PD(w)$, and $\Phi(P_1\vee P_2)$ is the componentwise maximum of $\Phi(P_1)$ and $\Phi(P_2)$. Moreover, the interval $[P_0,P_1\vee P_2]$ is a diamond. 
\end{lemma}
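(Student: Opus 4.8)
The plan is to transport the statement into the language of Lehmer tableaux, where the join becomes a literal componentwise maximum, and then to read off the diamond from the interval structure. Set $T_0=\Theta(P_0)$. By \cref{prop:chute}, for each $\delta\in\{1,2\}$ there is a set $\mathcal B_\delta$ of boxes in row $i_\delta$ of $\LRT_{n-1}$ (with leftmost box $(i_\delta,j_\delta)$) such that $T_\delta=\Theta(P_\delta)=\uparrow_{\mathcal B_\delta}\!T_0$; the set $\mathcal B_\delta$ records exactly the boxes moved by the chute move in $R_\delta$. By \cref{obs:incrementing_adding}, $L_\delta=\Phi(P_\delta)$ is obtained from $L_0=\Phi(P_0)$ by adding $1$ to each box of $\mathcal B_\delta$, so the componentwise maximum $L^\star$ of $L_1$ and $L_2$ is obtained from $L_0$ by adding $1$ to each box of $\mathcal B_1\cup\mathcal B_2$. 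I therefore set $T^\star=\uparrow_{\mathcal B_1\cup\mathcal B_2}\!T_0$, which satisfies $\Lambda(T^\star)=L^\star$ by \cref{obs:incrementing_adding}. In light of \cref{thm:isomorphism}, it suffices to prove that $T^\star$ is an inversions tableau (so that $P^\star:=\Theta^{-1}(T^\star)$ is defined with $\Phi(P^\star)=L^\star$) and then to understand the interval $[P_0,P^\star]$.

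The crux — and the step where I expect the genuine work to lie — is to show $T^\star\in\IT(w)$. My plan is to realize $T^\star$ as the image under $\Theta$ of an honest reduced pipe dream, which sidesteps having to check the balancedness and the row bound for $T^\star$ by hand. Concretely, I would show that the chute move $\C_{i_2,j_2}$ remains applicable to $P_1$. Passing from $P_0$ to $P_1$ alters tiles only in $\SW(R_1)$ and $\NE(R_1)$, and the hypotheses that $\SW(R_1)$ is not a corner of $R_2$ and $\SW(R_2)$ is not a corner of $R_1$ are exactly what guarantee that the corner configuration of $R_2$ on which $\C_{i_2,j_2}$ depends is undisturbed; hence $\C_{i_2,j_2}(P_1)\in\PD(w)$. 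The main obstacle is then to verify, via \cref{prop:chute}, that $\Theta(\C_{i_2,j_2}(P_1))=T^\star$: this amounts to checking that the set of pipes crossing vertically through $R_2$ is the same in $P_1$ as in $P_0$ and that the resulting increment set equals $\mathcal B_2\setminus\mathcal B_1$, so that $\Lambda(\uparrow_{\mathcal B_2\setminus\mathcal B_1}\!T_1)=L_1+\mathbf 1_{\mathcal B_2\setminus\mathcal B_1}=L^\star$. This establishes $T^\star\in\IT(w)$, identifies $P^\star=\C_{i_2,j_2}(P_1)$, and yields the cover $P_1\lessdot P^\star$. By the symmetric argument, $\C_{i_1,j_1}$ is applicable to $P_2$, produces the same $P^\star$, and gives $P_2\lessdot P^\star$.

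With $T^\star\in\IT(w)$ in hand, the join follows formally. Since $\Phi(P^\star)=L^\star\geq L_1,L_2$, \cref{thm:isomorphism} shows $P^\star$ is an upper bound of $P_1$ and $P_2$. If $P'\in\PD(w)$ is any upper bound, then $\Phi(P')\geq L_1$ and $\Phi(P')\geq L_2$ by \cref{thm:isomorphism}, so $\Phi(P')\geq L^\star$ because $L^\star$ is the componentwise maximum; applying \cref{thm:isomorphism} once more gives $P'\geq P^\star$. Hence $P^\star=P_1\vee P_2$, and $\Phi(P_1\vee P_2)=L^\star$ is the componentwise maximum of $\Phi(P_1)$ and $\Phi(P_2)$, as claimed.

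It remains to show the interval $[P_0,P^\star]$ is a diamond. Under the isomorphism $\Phi$, this interval matches $\{L\in\LT(w):L_0\leq L\leq L^\star\}$, and every such $L$ has the form $L=L_0+\mathbf 1_{S}$ with $\uparrow_{S}\!T_0\in\IT(w)$ for some $S\subseteq\mathcal B_1\cup\mathcal B_2$. Because $P_0\lessdot P_1$ and $P_0\lessdot P_2$ are cover relations, \cref{thm:isomorphism} shows that no proper nonempty subset of $\mathcal B_1$ alone, and no proper nonempty subset of $\mathcal B_2$ alone, yields an element of $\LT(w)$; combining this with the non-interference of the two moves established in the second step, I would argue that the balancedness of $\uparrow_S\!T_0$ forces $S\in\{\emptyset,\mathcal B_1,\mathcal B_2,\mathcal B_1\cup\mathcal B_2\}$. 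Thus $[P_0,P^\star]=\{P_0,P_1,P_2,P^\star\}$, which is the union of the two chains $P_0<P_1<P^\star$ and $P_0<P_2<P^\star$ meeting only at their endpoints, i.e., a diamond.
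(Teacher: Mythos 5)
Your proposal is correct and follows essentially the same route as the paper: both establish that the two chute moves commute (so that $\C_{i_2,j_2}(P_1)=\C_{i_1,j_1}(P_2)$ is a common upper bound), use \cref{prop:chute} to see that its Lehmer tableau is obtained from $L_0$ by adding $1$ to each box of the disjoint sets $\mathcal B_1\cup\mathcal B_2$, hence equals the componentwise maximum of $\Phi(P_1)$ and $\Phi(P_2)$, and then invoke \cref{thm:isomorphism} to transfer the least upper bound from $\LT(w)$ back to $\PD(w)$. The commutativity of the moves and the diamond shape of the interval are treated as routine verifications in the paper as well, so your level of detail matches the original.
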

\begin{proof}
It is straightforward to check that the chute moves $\C_{i_1,j_1}$ and $\C_{i_2,j_2}$ commute when we apply them to $P_0$. More precisely, it is possible to apply $\C_{i_1,j_1}$ to $P_2$ and to apply $\C_{i_2,j_2}$ to $P_1$, and we have $\C_{i_1,j_1}(P_2)=\C_{i_2,j_2}(P_1)$. Let $P_3=\C_{i_1,j_1}(P_2)$, and let $T_3=\Theta(P_3)$ and ${L_3=\Phi(P_3)=\Lambda(T_3)}$. For each $\delta\in\{1,2\}$, let $Y_\delta$ be the set of indices of pipes that cross vertically through $R_\delta$ in $P_0$.
Let ${\mathcal B_\delta=\{(i_\delta,j_\delta)\}\cup\{(i_\delta,y):y\in Y_\delta\}}$. It follows from \cref{prop:chute} that $T_\delta=\uparrow_{\mathcal B_\delta}\!T_0$. Upon inspection, we find that $Y_\delta$ is also the set of indices of pipes that cross vertically through $R_{3-\delta}$ in $P_{3-\delta}$. Hence, $T_3=\uparrow_{\mathcal B_1}\!T_2=\uparrow_{\mathcal B_2}\!T_1$. 

For each $\delta\in\{1,2\}$, we obtain the Lehmer tableau $L_\delta$ from $L_0$ by increasing the entries in the boxes in $\mathcal B_\delta$ by $1$ each. The sets $\mathcal B_1$ and $\mathcal B_2$ are disjoint, so we obtain $L_3$ from $L_0$ by increasing the entries in the boxes in $\mathcal B_1\cup\mathcal B_2$ by $1$ each. It follows that $L_3$ is the componentwise maximum of $L_1$ and $L_2$, so $L_3$ must be the least upper bound of $L_1$ and $L_2$ in $\LT(w)$. It follows from \cref{thm:isomorphism} that $P_3$ is the least upper bound of $P_1$ and $P_2$ in $\PD(w)$. Moreover, $[P_0,P_3]$ is a diamond. 
\end{proof}

Now that we have established \cref{lem:commute}, we may henceforth assume that the southwest corner of one of $R_1$ and $R_2$ is a corner of the other. Without loss of generality, let us assume that $\SW(R_1)$ is a corner of $R_2$. A moment's consideration reveals that $\SW(R_1)$ cannot be $\SW(R_2)$ or $\NE(R_2)$, so it must be $\NW(R_2)$ or $\SE(R_2)$. Somewhat surprisingly, the case where $\SW(R_1)=\NW(R_2)$ and the case where $\SW(R_1)=\SE(R_2)$ require fundamentally different arguments. 

\begin{figure}[]
  \begin{center}
  \includegraphics[height=20.42cm]{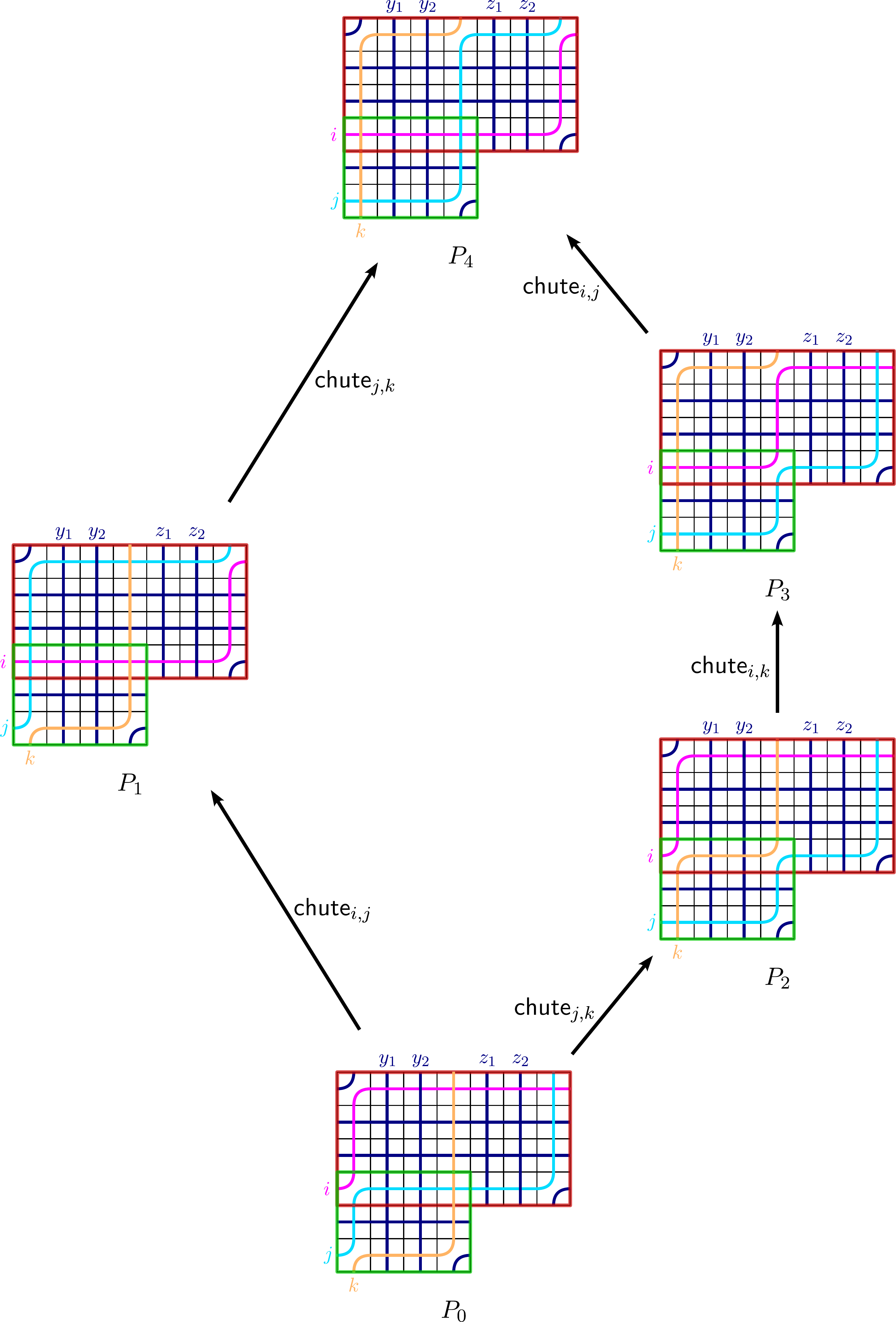}
  \end{center}
\caption{A schematic illustration of a pentagon formed by pipe dreams $P_0,P_1,P_2,P_3,P_4$ when the southwest corner of $R_1$ (outlined in {\color{MildRed}red}) is the northwest corner of $R_2$ (outlined in {\color{MildGreen}green}).  }\label{fig:pipes_pentagon} 
\end{figure} 

\begin{lemma}\label{lem:non-commutative1}
Suppose that $\SW(R_1)=\NW(R_2)$. Then $P_1$ and $P_2$ have a least upper bound $P_1\vee P_2$ in the chute move poset $\PD(w)$, and $\Phi(P_1\vee P_2)$ is the componentwise maximum of $\Phi(P_1)$ and $\Phi(P_2)$. Moreover, the interval $[P_0,P_1\vee P_2]$ is a pentagon. 
\end{lemma}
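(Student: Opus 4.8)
My plan is to push the whole configuration through the isomorphism $\Theta$ of \cref{thm:isomorphism} and to identify $P_1\vee P_2$ with the pipe dream whose inversions tableau realizes the componentwise maximum of the two Lehmer tableaux, imitating the endgame of \cref{lem:commute}. Writing $T_\epsilon=\Theta(P_\epsilon)$ and $L_\epsilon=\Lambda(T_\epsilon)$, \cref{prop:chute} gives $T_\delta=\uparrow_{\mathcal B_\delta}\!T_0$ for $\delta\in\{1,2\}$, where $\mathcal B_\delta=\{(i_\delta,j_\delta)\}\cup\{(i_\delta,y):y\in Y_\delta\}$ is a set of boxes lying in the single row $i_\delta$ of $\LRT_{n-1}$ and $Y_\delta$ records the pipes threading vertically through $R_\delta$. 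The first step is to convert the hypothesis $\SW(R_1)=\NW(R_2)$ into combinatorial data about $i_1,i_2,Y_1,Y_2$, reading off the precise overlap $\mathcal B_1\cap\mathcal B_2$ from \cref{fig:pipes_pentagon}; this is also where I would see why $\C_{i_2,j_2}$ fails to be applicable to $P_1$ (the shared corner becomes a cross after the first move), which is what forces a pentagon rather than the diamond of \cref{lem:commute}.

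Next I would let $L_4$ be the componentwise maximum of $L_1$ and $L_2$, and let $T_4=\uparrow_{\mathcal B_1\cup\mathcal B_2}\!T_0$ be the tableau obtained from $T_0$ by incrementing every box of $\mathcal B_1\cup\mathcal B_2$ once (the union taken as a set); by \cref{obs:incrementing_adding} we have $\Lambda(T_4)=L_4$. The technical heart is to verify $T_4\in\IT(w)$. Column-injectivity is automatic, and the row bounds follow from comparison with $T_1$ and $T_2$, so the real content is balancedness. For this I would run the same $\Le$-shape analysis developed in \cref{sec:lemmas}: only the $\Le$-shapes involving a box whose entry actually changes can fail, these all meet the rows $i_1,i_2$ or the columns of $\mathcal B_1\cup\mathcal B_2$, and \cref{lem:5.1,lem:less_hook,lem:bigger_after} are exactly the tools that control them, using that $T_1$ and $T_2$ are each already balanced together with the relation between $\mathcal B_1$ and $\mathcal B_2$ found in the first step. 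Once $T_4\in\IT(w)$, I set $P_4=\Theta^{-1}(T_4)$, so that $\Phi(P_4)=L_4$ is the componentwise maximum of $\Phi(P_1)$ and $\Phi(P_2)$.

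The least-upper-bound claim then follows formally, exactly as in \cref{lem:commute}. Since $L_4$ dominates both $L_1$ and $L_2$, \cref{thm:isomorphism} gives $P_4\geq P_1,P_2$; and if $Q$ is any common upper bound of $P_1$ and $P_2$, then $\Phi(Q)\geq L_1$ and $\Phi(Q)\geq L_2$, so $\Phi(Q)$ dominates the componentwise maximum $L_4=\Phi(P_4)$ and hence $Q\geq P_4$. Thus $P_1\vee P_2=P_4$, and $\Phi(P_1\vee P_2)$ is the componentwise maximum of $\Phi(P_1)$ and $\Phi(P_2)$.

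Finally I would establish that $[P_0,P_4]$ is a pentagon. Because $L_4$ exceeds $L_0$ by exactly $1$ on each box of $\mathcal B_1\cup\mathcal B_2$ and agrees with it elsewhere, \cref{thm:isomorphism} identifies the elements of $[P_0,P_4]$ with the subsets $S\subseteq\mathcal B_1\cup\mathcal B_2$ for which $\uparrow_S\!T_0\in\IT(w)$. The goal is to show that the only admissible such $S$ are $\emptyset$, $\mathcal B_1$, $\mathcal B_2$, a single intermediate set $S_3$ with $\mathcal B_2\subsetneq S_3\subsetneq\mathcal B_1\cup\mathcal B_2$, and $\mathcal B_1\cup\mathcal B_2$ itself; these assemble into the two chains $P_0\lessdot P_1\lessdot P_4$ and $P_0\lessdot P_2\lessdot P_3\lessdot P_4$ with $P_3=\Theta^{-1}(\uparrow_{S_3}\!T_0)$, and the covers are realized by genuine chute moves via \cref{prop:construct,prop:chute}. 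I expect the main obstacle to be precisely this classification: ruling out every other partial increment by showing that it violates balancedness, and proving that the boxes of $\mathcal B_1\setminus\mathcal B_2$ must be incremented in two separate chute moves rather than one. This last point, forced by the shared corner $\SW(R_1)=\NW(R_2)$, is exactly what distinguishes this lemma from the $\SW(R_1)=\SE(R_2)$ case and produces the extra element $P_3$. If boundary effects complicate the $\Le$-shape bookkeeping, I would first apply the triforce embedding of \cref{prop:Triforce} to move the relevant rectangles into the interior, where every corner tile is a bump.
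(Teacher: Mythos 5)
Your high-level strategy coincides with the paper's: identify $P_1\vee P_2$ with the element whose Lehmer tableau is the componentwise maximum of $\Phi(P_1)$ and $\Phi(P_2)$, deduce the join property formally from \cref{thm:isomorphism}, and get a pentagon from the asymmetry of the two chains. You also correctly anticipate the shape of the answer (one chain of length $2$, one of length $3$, with the boxes of $\mathcal B_1$ split across two chute moves on the longer chain) and correctly see why $\C_{i_2,j_2}$ cannot be applied to $P_1$. Two points you leave open are, however, exactly where the content lies. First, the ``overlap'' you propose to read off is empty: $\SW(R_1)=\NW(R_2)$ forces $j_1=i_2$, so with $i=i_1<j=j_1=i_2<k=j_2$ the sets $\mathcal B_1$ and $\mathcal B_2$ live in the distinct rows $i$ and $j$ and are disjoint; this disjointness is what makes $L_0+\mathbf 1_{\mathcal B_1}+\mathbf 1_{\mathcal B_2}$ equal to the componentwise maximum. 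Second, and more seriously, your ``technical heart'' --- verifying that $T_4=\uparrow_{\mathcal B_1\cup\mathcal B_2}\!T_0$ is an inversions tableau by an $\Le$-shape analysis --- is not carried out, and the lemmas you cite (\cref{lem:5.1,lem:less_hook,lem:bigger_after}) are calibrated for the isomorphism proof (controlling what \emph{must} be incremented when both endpoints are already known to be inversions tableaux), not for certifying balancedness of a newly built tableau. As stated, this step is a genuine gap.

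The paper sidesteps that verification entirely: it exhibits $P_3=\C_{i,k}(P_2)$ and $P_4=\C_{j,k}(P_1)=\C_{i,j}(P_3)$ as honest pipe dreams obtained by chute moves, checked locally on the tiles (\cref{fig:pipes_pentagon}), so $T_3,T_4\in\IT(w)$ for free, and \cref{prop:chute} then yields the increment sets $\mathcal B_{0,1}$, $\mathcal B_{0,2}=\mathcal B_{1,4}$, $\mathcal B_{2,3}$, $\mathcal B_{3,4}$ whose disjointness relations give the componentwise-maximum statement and the pentagon. If you want to complete your version, the cleanest repair is to adopt this constructive step: define $P_4$ by performing the chute moves $\C_{j,k}$ on $P_1$ (equivalently $\C_{i,j}\circ\C_{i,k}$ on $P_2$), verify applicability from the tile configuration forced by $\SW(R_1)=\NW(R_2)$, and then your formal least-upper-bound argument goes through verbatim. (Your fallback of invoking the triforce embedding does not help here; the paper reserves it for the genuinely harder case $\SW(R_1)=\SE(R_2)$, where the componentwise maximum of the Lehmer tableaux is \emph{not} attained in $\LT(w)$ --- a warning that the attainability you are trying to prove is not automatic and really does use the hypothesis $\SW(R_1)=\NW(R_2)$.)
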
 
\begin{proof}
Recall that $P_1=\C_{i_1,j_1}(P_0)$ and $P_2=\C_{i_2,j_2}(P_0)$. We have $j_1=i_2$. Let $i=i_1$, $j=j_1=i_2$, and $k=j_2$. Then $i<j<k$. \cref{fig:pipes_pentagon} provides a schematic illustration of the relevant parts of $P_0$, $P_1$, and $P_2$, along with two other pipe dreams $P_3=\C_{i,k}(P_2)$ and ${P_4=\C_{j,k}(P_1)=\C_{i,j}(P_3)}$. We will show that $P_4$ is the least upper bound of $P_1$ and $P_2$. 

Let $a,b,c$ be the indices of the rows of $\ULT_n$ containing the boxes $\NW(R_1),\SW(R_1),\SW(R_2)$, respectively. For each $\epsilon\in\{0,1,2,3,4\}$, let $T_\epsilon=\Theta(P_\epsilon)$, and let $L_\epsilon=\Phi(P_\epsilon)=\Lambda(T_\epsilon)$. \cref{fig:tableaux_pentagon} shows schematic illustrations of the relevant parts of these inversions tableaux. Let $y_1,\ldots,y_\ell$ be the indices of pipes that cross vertically through $R_2$ in $P_0$; note that these pipes also cross vertically through $R_1$ in $P_0$. The pipe $k$ also crosses vertically through $R_1$ in $P_0$. Let 
$z_1,\ldots,z_m$ be the indices of pipes that cross vertically through $R_1$ in $P_0$ but do not belong to the set $\{y_1,\ldots,y_\ell,k\}$. 
For each pair $(\epsilon,\epsilon')\in\{(0,1),(0,2),(1,4),(2,3),(3,4)\}$, we know by \cref{prop:chute} that there is a set $\mathcal B_{\epsilon,\epsilon'}$ of boxes such that $T_{\epsilon'}=\uparrow_{\mathcal B_{\epsilon,\epsilon'}}\!T_\epsilon$. Moreover, by \cref{prop:chute}, we have  
\begin{align*}
\mathcal B_{0,1}&=\{(i,j),(i,y_1),\ldots,(i,y_\ell),(i,k),(i,z_1),\ldots,(i,z_m)\}, \\ 
\mathcal B_{0,2}&=\mathcal B_{1,4}=\{(j,k),(j,y_1),\ldots,(j,y_\ell)\}, \\ 
\mathcal B_{2,3}&=\{(i,k),(i,y_1),\ldots,(i,y_\ell)\}, \\ 
\mathcal B_{3,4}&=\{(i,j),(i,z_1),\ldots,(i,z_m)\}.
\end{align*} 
We obtain $L_{\epsilon'}$ from $L_\epsilon$ by increasing the entries in the boxes in $\mathcal B_{\epsilon,\epsilon'}$ by $1$ each. Because $\mathcal B_{0,1}$ and $\mathcal B_{0,2}$ are disjoint and $\mathcal B_{0,2}=\mathcal B_{1,4}$, the tableau $L_4$ is the componentwise maximum of $L_1$ and $L_2$. This implies that $L_4$ is the least upper bound of $L_1$ and $L_2$ in $\LT(w)$. By \cref{thm:isomorphism}, the pipe dream $P_4$ is the least upper bound of $P_1$ and $P_2$ in $\PD(w)$. Moreover, the interval $[P_0,P_4]$ is a pentagon. 
\end{proof} 

\begin{figure}[]
  \begin{center}
  \includegraphics[height=21.36cm]{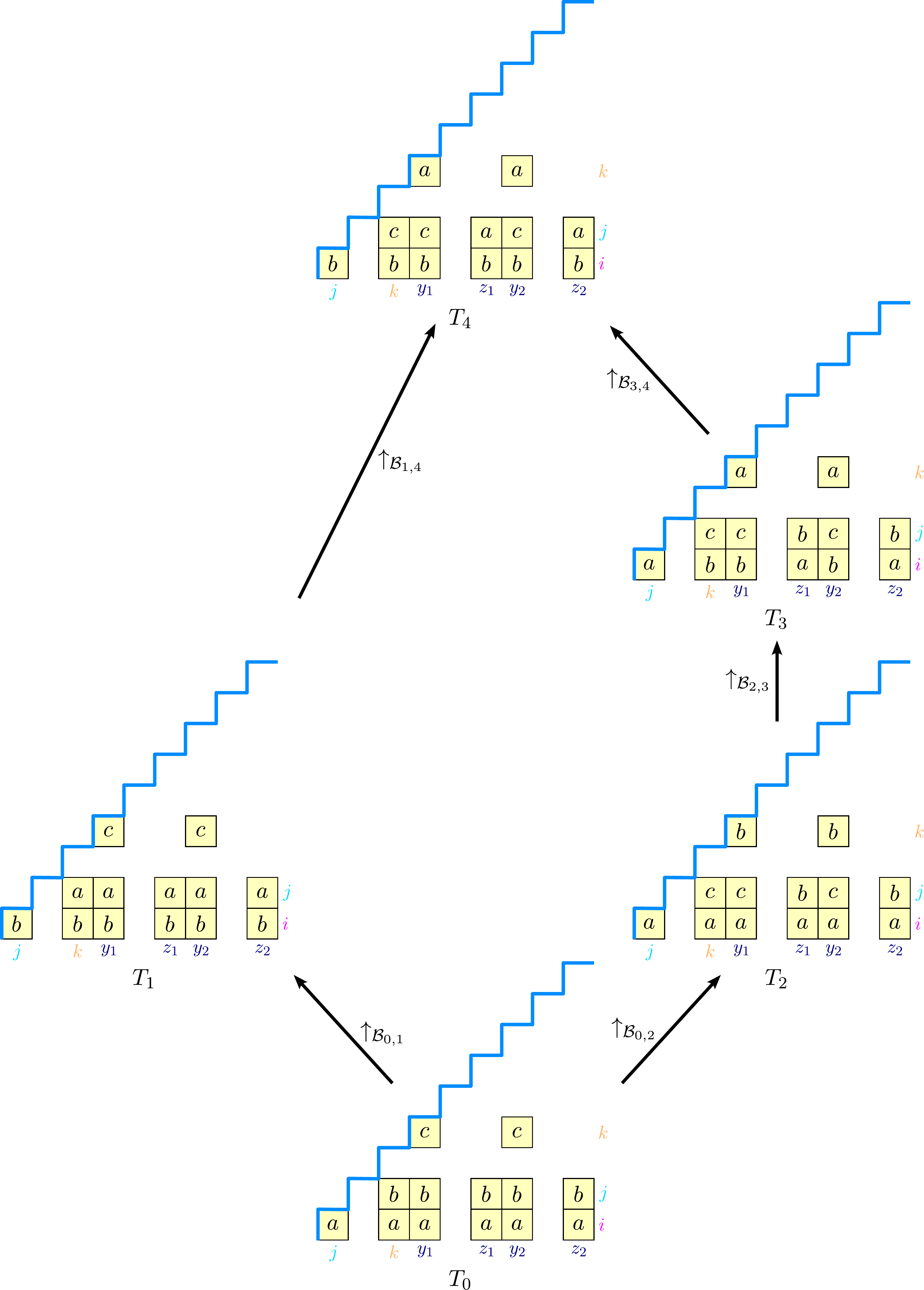}
  \end{center}
\caption{A schematic illustration of the inversions tableaux $T_0,T_1,T_2,T_3,T_4$ corresponding to the pipe dreams in \cref{fig:pipes_pentagon}.  }\label{fig:tableaux_pentagon} 
\end{figure}

If $\SW(R_1)=\NW(R_2)$, then we saw in the proof of \cref{lem:non-commutative1} that the componentwise maximum of $\Phi(P_1)$ and $\Phi(P_2)$ is the Lehmer tableau of the least upper bound of $P_1$ and $P_2$. This is no longer the case if $\SW(R_1)=\SE(R_2)$. In this case, we will appeal to the triforce embedding from \cref{subsec:Triforce}. 

\begin{lemma}\label{lem:non-commutative2}
Suppose that $\SW(R_1)=\SE(R_2)$. Then $P_1$ and $P_2$ have a least upper bound $P_1\vee P_2$ in the chute move poset $\PD(w)$. Moreover, the interval $[P_0,P_1\vee P_2]$ is a pentagon. 
\end{lemma}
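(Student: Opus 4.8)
The plan is to use the triforce embedding from \cref{subsec:Triforce} to reduce this configuration to the one already handled by \cref{lem:non-commutative1}. Recall from \cref{prop:Triforce} that $P\mapsto P^{\Tri}$ is a poset isomorphism from $\PD(w)$ onto an interval $I$ in $\PD(w^{\Tri})$; in particular it preserves cover relations, so $P_0^{\Tri}\lessdot_{\C} P_1^{\Tri}$ and $P_0^{\Tri}\lessdot_{\C} P_2^{\Tri}$, and these covers are realized by chute moves taking place in the rectangles $\RR(P_0^{\Tri},P_1^{\Tri})$ and $\RR(P_0^{\Tri},P_2^{\Tri})$ in $P_0^{\Tri}$.

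The key step is a geometric computation about how the triforce construction transports rectangles. The construction reflects the boxes of $P_0$ across the anti-diagonal via the map $(i,j)\mapsto(n+1-j,n+1-i)$ (turning elbow tiles into bumps and filling the new region with bumps). I would first verify directly from this reflection that if a chute move takes place in a rectangle $R$ in $P_0$, then the corresponding chute move in $P_0^{\Tri}$ takes place in the reflected rectangle, and that under the reflection the two active corners $\SW$ and $\NE$ (the boxes whose tiles actually swap) are sent respectively to the $\SW$ and $\NE$ corners of the reflected rectangle, while the passive corners $\NW$ and $\SE$ are interchanged. Checking that the reflected tile pattern still meets the chute-move requirements (bumps in $\NW,\SW,\SE$ and a cross in $\NE$, with crosses in the interior) confirms that the move transfers in the same direction rather than reversing. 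Consequently, the hypothesis $\SW(R_1)=\SE(R_2)$ is transported to $\SW(\RR(P_0^{\Tri},P_1^{\Tri}))=\NW(\RR(P_0^{\Tri},P_2^{\Tri}))$, which is exactly the hypothesis of \cref{lem:non-commutative1}.

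Applying \cref{lem:non-commutative1} to the permutation $w^{\Tri}\in\SSS_{2n}$ and the pipe dreams $P_0^{\Tri},P_1^{\Tri},P_2^{\Tri}$ then yields a least upper bound $P_1^{\Tri}\vee P_2^{\Tri}$ in $\PD(w^{\Tri})$ such that $[P_0^{\Tri},P_1^{\Tri}\vee P_2^{\Tri}]$ is a pentagon. It remains to descend this back to $\PD(w)$. Since $P_1^{\Tri},P_2^{\Tri}\in I$ and $I$ has a maximum element, that maximum is an upper bound of both, so $P_1^{\Tri}\vee P_2^{\Tri}\leq\max I$; as it also lies above $\min I$, we get $P_1^{\Tri}\vee P_2^{\Tri}\in I$. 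Hence it equals $Q^{\Tri}$ for a unique $Q\in\PD(w)$, and because $I$ is order-convex, any upper bound of $P_1,P_2$ in $\PD(w)$ maps into $I$ and dominates $Q^{\Tri}$, so $Q$ is the least upper bound of $P_1$ and $P_2$ in $\PD(w)$. Finally, the entire interval $[P_0^{\Tri},Q^{\Tri}]$ lies in $I$, and $P\mapsto P^{\Tri}$ restricts to an isomorphism onto it, so the pentagon structure pulls back to show that $[P_0,Q]$ is a pentagon.

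I expect the main obstacle to be the geometric bookkeeping in the second step: pinning down precisely that the triforce reflection carries a chute move in $R$ to a chute move in the reflected rectangle (and not to a differently shaped rectangle or a move in the opposite direction), and that it fixes the active $\SW$ and $\NE$ corners while swapping $\NW$ and $\SE$. Once this correspondence is established, the reduction to \cref{lem:non-commutative1} and the descent through the interval $I$ are routine order-theoretic arguments. Note that, unlike in \cref{lem:non-commutative1}, we do not expect $\Phi(P_1\vee P_2)$ to equal the componentwise maximum of $\Phi(P_1)$ and $\Phi(P_2)$ here, which is precisely why the detour through $\PD(w^{\Tri})$ is needed.
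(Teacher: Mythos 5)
Your proposal is correct and follows essentially the same route as the paper: pass to $w^{\Tri}$ via the triforce embedding, observe that the reflection through the line of slope $1$ fixes the $\SW$ and $\NE$ corners of each chute-move rectangle while interchanging $\NW$ and $\SE$ (so the hypothesis $\SW(R_1)=\SE(R_2)$ becomes $\SW(R_1^{\Tri})=\NW(R_2^{\Tri})$), apply \cref{lem:non-commutative1} in $\PD(w^{\Tri})$, and pull the join and the pentagon back through the interval using \cref{prop:Triforce}. Your corner-tracking computation and the order-convexity descent are exactly the details the paper leaves implicit.
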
 
\begin{proof}
For each $\delta\in\{1,2\}$, it follows from \cref{prop:Triforce} that $P_\delta^{\Tri}$ covers $P_0^{\Tri}$ in $\PD(w^{\Tri})$. In other words, $P_\delta^{\Tri}$ is obtained by applying a chute move to $P_0^{\Tri}$; let $R_\delta^{\Tri}=\RR(P_0^{\Tri},P_\delta^{\Tri})$ be the rectangle in $\ULT_{2n}$ where this chute move takes place. We obtain $R_\delta^{\Tri}$ by reflecting $R_\delta$ through a line of slope~$1$. Since $\SW(R_1)=\SE(R_2)$, it follows that $\SW(R_1^{\Tri})=\NW(R_2^{\Tri})$. We deduce from \cref{lem:non-commutative1} (applied to $w^{\Tri}$ instead of $w$) that $P_1^{\Tri}$ and $P_2^{\Tri}$ have a least upper bound $P_1^{\Tri}\vee P_2^{\Tri}$ in $\PD(w^{\Tri})$. Moreover, the interval $[P_0^{\Tri},P_1^{\Tri}\vee P_2^{\Tri}]$ is a pentagon. Invoking \cref{prop:Triforce} again, we find that $P_1$ and $P_2$ have a least upper bound $P_1\vee P_2$ in $\PD(w)$ and that the interval $[P_0,P_1\vee P_2]$ is a pentagon. 
\end{proof} 

\begin{remark}
It is possible to prove \cref{lem:non-commutative2} via an argument similar to the one used to prove \cref{lem:non-commutative1}. However, the proof in this case is more complicated and requires repeated applications of \cref{lem:less_hook}. We have chosen to instead use the triforce embedding because it simplifies the argument and foreshadows our proof that $\PD(w)$ is semidistributive.  
\end{remark} 

\cref{lem:commute,lem:non-commutative1,lem:non-commutative2} allow us to apply \cref{prop:BEZ} to complete the proof of \cref{thm:main}. 

We also have the following corollary, which proves part of \cref{thm:semidistributive}. 

\begin{corollary}\label{cor:polygonal} 
For every permutation $w\in\SSS_n$, the lattice $\PD(w)$ is polygonal, and each of its polygons is a diamond or a pentagon.  
\end{corollary}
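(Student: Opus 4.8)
The plan is to deduce the corollary from the three preceding lemmas together with the transpose anti-isomorphism of \cref{sec:transformations}, and then to reduce an arbitrary polygon to the local intervals treated by those lemmas.

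First I would record exactly what the three lemmas provide. Fix distinct $\gamma,\gamma'\in\PD(w)$ that both cover $\gamma\wedge\gamma'$, and set $P_0=\gamma\wedge\gamma'$, $P_1=\gamma$, $P_2=\gamma'$. Then $P_0\lessdot P_1$ and $P_0\lessdot P_2$ with $P_1\neq P_2$, so $P_0,P_1,P_2$ satisfy the standing hypotheses fixed just before \cref{lem:commute}; here $P_0=P_1\wedge P_2$ holds automatically, since $P_1$ and $P_2$ are incomparable. The reductions carried out in the text split this situation into exactly the three cases treated by \cref{lem:commute} (the southwest corners of $R_1$ and $R_2$ are not corners of each other), \cref{lem:non-commutative1} (where $\SW(R_1)=\NW(R_2)$), and \cref{lem:non-commutative2} (where $\SW(R_1)=\SE(R_2)$). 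In each case the relevant lemma shows that $[P_0,P_1\vee P_2]=[\gamma\wedge\gamma',\gamma\vee\gamma']$ is a diamond or a pentagon. This is precisely the ``bottom'' half of polygonality, with the stronger diamond-or-pentagon conclusion.

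Next I would obtain the ``top'' half by dualizing rather than redoing the case analysis. Since $P\mapsto P^\top$ is a poset anti-isomorphism $\PD(w)\to\PD(w^{-1})$, any two distinct elements $\gamma,\gamma'$ that are covered by $\gamma\vee\gamma'$ in $\PD(w)$ are carried to elements $\gamma^\top,(\gamma')^\top$ that cover their meet $\gamma^\top\wedge(\gamma')^\top=(\gamma\vee\gamma')^\top$ in $\PD(w^{-1})$. Applying the bottom-half statement already proved (now for the permutation $w^{-1}$) shows that $[\gamma^\top\wedge(\gamma')^\top,\,\gamma^\top\vee(\gamma')^\top]=[(\gamma\vee\gamma')^\top,(\gamma\wedge\gamma')^\top]$ is a diamond or a pentagon; since an anti-isomorphism preserves cardinalities of intervals and sends a union of two chains to a union of two chains, the corresponding interval $[\gamma\wedge\gamma',\gamma\vee\gamma']$ in $\PD(w)$ is again a diamond or a pentagon. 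The two halves together give polygonality of $\PD(w)$.

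Finally, to see that \emph{every} polygon is a diamond or a pentagon, I would show each polygon is one of the local intervals above. Let $[\alpha,\beta]$ be a polygon, the union of its two maximal chains, which meet only at $\alpha$ and $\beta$. Its two atoms $\gamma_1,\gamma_2$ both cover $\alpha$ and satisfy $\gamma_1\wedge\gamma_2=\alpha$; since in a polygon the two sides are incomparable except at $\alpha$ and $\beta$, they also satisfy $\gamma_1\vee\gamma_2=\beta$. Hence $[\alpha,\beta]=[\gamma_1\wedge\gamma_2,\gamma_1\vee\gamma_2]$, and the bottom-half statement shows this is a diamond or a pentagon. The step I expect to be the main obstacle is the top half: the three lemmas are inherently asymmetric (a chute move turns two bump tiles and some crosses into the swapped configuration, so covers from below and covers from above behave differently), and establishing the top condition directly would require a full dual case analysis. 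Invoking the transpose anti-isomorphism is what lets us bypass this, so the crux is really the bookkeeping verifying that $P\mapsto P^\top$ sends the top situation for $w$ to the already-settled bottom situation for $w^{-1}$ while preserving the diamond/pentagon dichotomy.
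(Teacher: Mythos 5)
Your proposal is correct and follows essentially the same route as the paper: the bottom half of polygonality (with the diamond-or-pentagon conclusion) comes from \cref{lem:commute,lem:non-commutative1,lem:non-commutative2}, and the top half is obtained by passing to the transpose anti-isomorphism $\PD(w)\to\PD(w^{-1})$. Your final paragraph identifying an arbitrary polygon $[\alpha,\beta]$ with the interval $[\gamma_1\wedge\gamma_2,\gamma_1\vee\gamma_2]$ generated by its two atoms is a correct elaboration of a step the paper leaves implicit.
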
 
\begin{proof}
If $w\in\SSS_n$ and $P,P',P''\in\PD(w)$ are such that $P\lessdot_{\C}P'$ and $P\lessdot_{\C}P''$, then it follows from \cref{lem:commute,lem:non-commutative1,lem:non-commutative2} that the interval $[P,P'\vee P'']$ is a diamond or a pentagon. 

If $w\in\SSS_n$ and $P,P',P''\in\PD(w)$ are such that $P'\lessdot_{\C}P$ and $P''\lessdot_{\C}P$, then the interval $[P'\wedge P'',P]$ is a diamond or a pentagon; indeed, this follows from the preceding paragraph and the fact that $\PD(w)$ is anti-isomorphic to $\PD(w^{-1})$. 
\end{proof} 

\section{Semidistributivity}\label{sec:semidistributivity}  

Let $w\in\SSS_n$. Now that we have established that $\PD(w)$ is a lattice, we can prove that it is semidistributive. 

\begin{lemma}\label{lem:semi1}
Suppose $P_{\dd},P_{\dd\dd}\in\PD(w)$ are such that $P_\dd\lessdot_{\C}P_{\dd\dd}$. Let \[{X=\{P\in\PD(w):P\wedge P_{\dd\dd}=P_{\dd}\}}.\] Suppose $P_0,P_1,P_2\in X$ are such that $P_1=\C_{i_1,j_1}(P_0)$ and $P_2=\C_{i_2,j_2}(P_0)$ for some distinct inversions $(i_1,j_1)$ and $(i_2,j_2)$ of $w$. Let $R_1=\RR(P_0,P_1)$ and $R_2=\RR(P_0,P_2)$. Suppose that $\SW(R_1)\neq\SE(R_2)$ and $\SW(R_2)\neq\SE(R_1)$. If $i_1\neq i_2$, then $P_1\vee P_2\in X$. 
\end{lemma}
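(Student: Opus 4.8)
The plan is to pass through the isomorphism $\Phi$ of \cref{thm:isomorphism} and reduce the claim $P_1\vee P_2\in X$ to a statement about Lehmer tableaux. First observe that since $P_1,P_2\in X$ we have $P_\delta\geq P_\delta\wedge P_{\dd\dd}=P_\dd$ for $\delta\in\{1,2\}$, and also $P_{\dd\dd}\geq P_\dd$. Hence $(P_1\vee P_2)\wedge P_{\dd\dd}$ lies in the interval $[P_\dd,P_{\dd\dd}]$, which equals $\{P_\dd,P_{\dd\dd}\}$ because $P_\dd\lessdot P_{\dd\dd}$. Thus $P_1\vee P_2\in X$ (i.e. $(P_1\vee P_2)\wedge P_{\dd\dd}=P_\dd$) is equivalent to $P_{\dd\dd}\not\leq P_1\vee P_2$, and I would assume toward a contradiction that $P_{\dd\dd}\leq P_1\vee P_2$.

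Next I would pin down $\Phi(P_1\vee P_2)$. The hypotheses $\SW(R_1)\neq\SE(R_2)$ and $\SW(R_2)\neq\SE(R_1)$ exclude the configuration of \cref{lem:non-commutative2}; a corner check (possibly after swapping $R_1$ and $R_2$) shows we are therefore in the situation of \cref{lem:commute} or of \cref{lem:non-commutative1}. In either case those lemmas give that $\Phi(P_1\vee P_2)$ is the componentwise maximum of $\Phi(P_1)$ and $\Phi(P_2)$. By \cref{prop:chute} and \cref{obs:incrementing_adding}, $\Phi(P_\delta)$ is obtained from $\Phi(P_0)$ by adding $1$ to each box of a set $\mathcal B_\delta$ contained entirely in row $i_\delta$. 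Since $i_1\neq i_2$, the sets $\mathcal B_1$ and $\mathcal B_2$ are disjoint, so $\Phi(P_1\vee P_2)=\Phi(P_0)+\mathbbm{1}_{\mathcal B_1\cup\mathcal B_2}$.

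The heart of the argument, and the step I expect to be the main obstacle, is extracting a contradiction from $\Phi(P_{\dd\dd})\leq\Phi(P_1\vee P_2)$ using the remaining structure. Because $P_1\in X$ we have $P_{\dd\dd}\not\leq P_1$, so $\Phi(P_{\dd\dd})$ strictly exceeds $\Phi(P_1)=\Phi(P_0)+\mathbbm{1}_{\mathcal B_1}$ at some box $(a,b)$; sandwiching this between $\Phi(P_0)+\mathbbm{1}_{\mathcal B_1}$ and $\Phi(P_0)+\mathbbm{1}_{\mathcal B_1\cup\mathcal B_2}$ forces $(a,b)\in\mathcal B_2\setminus\mathcal B_1$ (hence $a=i_2$) and $\Phi(P_{\dd\dd})(a,b)=\Phi(P_0)(a,b)+1$. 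Symmetrically, $P_2\in X$ produces a box $(c,d)$ with $c=i_1$ and $\Phi(P_{\dd\dd})(c,d)=\Phi(P_0)(c,d)+1$.

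Finally I would invoke the cover relation $P_\dd\lessdot P_{\dd\dd}$ itself: a single chute move realizes it (the first chute move in any chute-move sequence from $P_\dd$ to $P_{\dd\dd}$ already reaches $P_{\dd\dd}$, by the cover property), so by \cref{prop:chute} and \cref{obs:incrementing_adding} the tableau $\Phi(P_{\dd\dd})$ equals $\Phi(P_\dd)$ with $1$ added on a set $\mathcal C$ contained in a single row $\xx$. Since $P_0\geq P_\dd$ gives $\Phi(P_0)\geq\Phi(P_\dd)$, at each of the two boxes above the equality $\Phi(P_{\dd\dd})=\Phi(P_0)+1\geq\Phi(P_\dd)+1$ forces the box to lie in $\mathcal C$, hence in row $\xx$. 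But one box is in row $i_2$ and the other in row $i_1$, so $i_1=\xx=i_2$, contradicting $i_1\neq i_2$. This contradiction shows $P_{\dd\dd}\not\leq P_1\vee P_2$, completing the proof.
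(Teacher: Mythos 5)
Your proposal is correct and follows essentially the same route as the paper: both reduce the claim to $\Phi(P_{\dd\dd})\not\leq\Phi(P_1\vee P_2)$, identify $\Phi(P_1\vee P_2)$ as the componentwise maximum via \cref{lem:commute} and \cref{lem:non-commutative1}, and exploit the fact that the increments for $P_0\to P_\delta$ lie in row $i_\delta$ while those for $P_\dd\to P_{\dd\dd}$ lie in a single row $i_\dd$. The only cosmetic difference is that the paper picks the $\delta$ with $i_\delta\neq i_\dd$ and argues directly, whereas you run both indices through a contradiction to force $i_1=i_2$.
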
 
\begin{proof}
Let $(i_\dd,j_\dd)$ be the inversion of $w$ such that $P_{\dd\dd}=\C_{i_\dd,j_\dd}(P_\dd)$. It follows from \cref{prop:chute} that there is a set $\mathcal B_\dd$ of boxes in row $i_\dd$ of $\LRT_{n-1}$ such that $\Phi(P_{\dd\dd})$ is obtained from $\Phi(P_\dd)$ by increasing the entries in $\mathcal B_\dd$ by $1$ each. Suppose $i_1\neq i_2$, and choose $\delta\in\{1,2\}$ such that $i_\delta\neq i_\dd$. It follows from \cref{lem:commute,lem:non-commutative1} that there is a set $\mathcal B_\delta$ of boxes in row $i_\delta$ such that $\Phi(P_1\vee P_2)$ is obtained from $\Phi(P_{3-\delta})$ by increasing the entries in $\mathcal B_\delta$ by $1$ each. Since $P_{3-\delta}\in X$, we have $\Phi(P_\dd)\leq\Phi(P_{3-\delta})$ and $\Phi(P_{\dd\dd})\not\leq\Phi(P_{3-\delta})$. Because $i_\delta\neq i_\dd$, it follows that $\Phi(P_{\dd\dd})\not\leq\Phi(P_1\vee P_2)$, so $P_{\dd\dd}\not\leqC P_1\vee P_2$. But $P_\dd\leqC P_1\leqC P_1\vee P_2$, so $P_1\vee P_2\in X$. 
\end{proof}

\begin{lemma}\label{lem:semi2}
Suppose $P_{\dd},P_{\dd\dd}\in\PD(w)$ are such that $P_\dd\lessdot_{\C}P_{\dd\dd}$. Let \[{X=\{P\in\PD(w):P\wedge P_{\dd\dd}=P_{\dd}\}}.\] Suppose $P_0,P_1,P_2\in X$ are such that $P_1=\C_{i_1,j_1}(P_0)$ and $P_2=\C_{i_2,j_2}(P_0)$ for some distinct inversions $(i_1,j_1)$ and $(i_2,j_2)$ of $w$. Let $R_1=\RR(P_0,P_1)$ and $R_2=\RR(P_0,P_2)$. Suppose that $\SW(R_1)\neq\NW(R_2)$ and $\SW(R_2)\neq\NW(R_1)$. If $j_1\neq j_2$, then $P_1\vee P_2\in X$. 
\end{lemma}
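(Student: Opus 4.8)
The plan is to deduce \cref{lem:semi2} from \cref{lem:semi1} by means of the triforce embedding of \cref{subsec:Triforce}, in direct analogy with the way \cref{lem:non-commutative2} was obtained from \cref{lem:non-commutative1}. Throughout, I would work with the embedded pipe dreams $P_0^\Tri,P_1^\Tri,P_2^\Tri$ and $P_\dd^\Tri,P_{\dd\dd}^\Tri$ in $\PD(w^\Tri)$, together with the set $X^\Tri=\{Q\in\PD(w^\Tri):Q\wedge P_{\dd\dd}^\Tri=P_\dd^\Tri\}$.

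First I would transfer the hypotheses and the goal across the embedding. By \cref{prop:Triforce}, the map $P\mapsto P^\Tri$ is an isomorphism from $\PD(w)$ onto an interval $I$ of $\PD(w^\Tri)$. Since $I$ is order-convex, cover relations in $I$ coincide with cover relations in $\PD(w^\Tri)$, so $P_\dd^\Tri\lessdot_{\C}P_{\dd\dd}^\Tri$. Since $I$ is an interval of a lattice (recall $\PD(w^\Tri)$ is a lattice by \cref{thm:main}), it is a sublattice, so meets and joins of elements of $I$ agree whether computed in $I$ or in $\PD(w^\Tri)$; combined with the fact that the embedding is an isomorphism onto $I$, this gives $P_\epsilon\in X\Leftrightarrow P_\epsilon^\Tri\in X^\Tri$ for $\epsilon\in\{0,1,2\}$ and $(P_1\vee P_2)^\Tri=P_1^\Tri\vee P_2^\Tri$. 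Hence it suffices to prove $P_1^\Tri\vee P_2^\Tri\in X^\Tri$ and then pull back.

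Next I would check that the hypotheses of \cref{lem:semi1} hold for $w^\Tri$. By \cref{prop:Triforce}, each $P_\delta^\Tri$ covers $P_0^\Tri$, so $P_\delta^\Tri=\C_{i_\delta',j_\delta'}(P_0^\Tri)$ for the chute move taking place in $R_\delta^\Tri=\RR(P_0^\Tri,P_\delta^\Tri)$, the reflection of $R_\delta$ through a line of slope $1$. As already used in the proof of \cref{lem:non-commutative2}, this reflection fixes the role of the southwest corner and interchanges the roles of the northwest and southeast corners; thus $\SW(R_1^\Tri)=\SE(R_2^\Tri)$ would force $\SW(R_1)=\NW(R_2)$, and similarly for the other pair. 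The hypotheses $\SW(R_1)\neq\NW(R_2)$ and $\SW(R_2)\neq\NW(R_1)$ of \cref{lem:semi2} therefore become $\SW(R_1^\Tri)\neq\SE(R_2^\Tri)$ and $\SW(R_2^\Tri)\neq\SE(R_1^\Tri)$, which are exactly the geometric hypotheses of \cref{lem:semi1}. Because the triforce interchanges the roles of rows and columns of the inversions diagram (just as the transpose does in \cref{prop:transpose})---concretely, the inversion $(i_\delta,j_\delta)$ of $w$ corresponds to the inversion $(i_\delta',j_\delta')=(2n+1-j_\delta,\,2n+1-i_\delta)$ of $w^\Tri$, so $i_\delta'=2n+1-j_\delta$---the hypothesis $j_1\neq j_2$ becomes $i_1'\neq i_2'$. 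With $P_0^\Tri,P_1^\Tri,P_2^\Tri\in X^\Tri$, the distinct inversions $(i_\delta',j_\delta')$, and all the geometric and index hypotheses verified, \cref{lem:semi1} yields $P_1^\Tri\vee P_2^\Tri\in X^\Tri$, and pulling back gives $P_1\vee P_2\in X$.

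The only part requiring genuine care is the bookkeeping in the previous paragraph: verifying that the slope-$1$ reflection defining the triforce fixes the southwest corner while swapping the northwest and southeast corners, and that it swaps the two coordinates of an inversion so that the column condition $j_1\neq j_2$ turns into the row condition $i_1'\neq i_2'$ needed by \cref{lem:semi1}. Both facts follow directly from the definition of the triforce embedding in \cref{subsec:Triforce}, and the second is the precise analogue of the row/column interchange recorded for the transpose in \cref{prop:transpose}.
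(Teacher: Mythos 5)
Your proposal is correct and follows essentially the same route as the paper: reduce to \cref{lem:semi1} via the triforce embedding, using that the slope-$1$ reflection fixes the southwest corner while exchanging northwest and southeast, and that the inversion $(i_\delta,j_\delta)$ becomes $(2n+1-j_\delta,2n+1-i_\delta)$ so the condition $j_1\neq j_2$ turns into the row condition needed by \cref{lem:semi1}. Your extra remarks justifying $X^{\Tri}=\{P^{\Tri}:P\in X\}$ and $(P_1\vee P_2)^{\Tri}=P_1^{\Tri}\vee P_2^{\Tri}$ simply spell out details the paper leaves implicit.
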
 
\begin{proof}
It follows from \cref{prop:Triforce} that $P_\dd^{\Tri}\lessdot_{\C}P_{\dd\dd}^{\Tri}$. Let \[{X^{\Tri}=\{P\in\PD(w^{\Tri}):P\wedge P_{\dd\dd}^{\Tri}=P_{\dd}^{\Tri}\}}.\] Then $X^{\Tri}=\{P^{\Tri}:P\in X\}$, so $P_0^{\Tri},P_1^{\Tri},P_2^{\Tri}\in X$. It follows from the definition of the triforce embedding that $P_1^{\Tri}=\C_{2n+1-j_1,2n+1-i_1}(P_0^{\Tri})$ and $P_2^{\Tri}=\C_{2n+1-j_2,2n+1-i_2}(P_0^{\Tri})$. For each $\delta\in\{1,2\}$, let $R_\delta^{\Tri}=\RR(P_0^{\Tri},P_\delta^{\Tri})$. There is a line $\mathfrak l$ of slope $1$ such that $R_1^{\Tri}$ and $R_2^{\Tri}$ are obtained by reflecting $R_1$ and $R_2$, respectively, through $\mathfrak l$. Therefore, the hypothesis that $\SW(R_1)\neq\NW(R_2)$ and $\SW(R_2)\neq\NW(R_1)$ implies that $\SW(R_1^{\Tri})\neq\SE(R_2^{\Tri})$ and $\SW(R_2^{\Tri})\neq\SE(R_1^{\Tri})$. 

Suppose $j_1\neq j_2$. Then $2n+1-j_1\neq 2n+1-j_2$, so we can apply \cref{lem:semi1} to find that $P_1^{\Tri}\vee P_2^{\Tri}\in X^{\Tri}$. Appealing to \cref{prop:Triforce}, we find that $P_1^{\Tri}\vee P_2^{\Tri}=(P_1\vee P_2)^{\Tri}$, which implies that $P_1\vee P_2\in X$. 
\end{proof} 

\begin{proof}[Proof of \cref{thm:semidistributive}] 
In light of \cref{cor:polygonal}, we just need to prove that every chute move poset is semidistributive. Because every chute move poset is anti-isomorphic to another chute move poset ($\PD(w)$ is anti-isomorphic to $\PD(w^{-1})$), it suffices to prove that every chute move poset is meet-semidistributive. 

Fix a permutation $w\in\SSS_n$. Suppose $P_{\dd},P_{\dd\dd}\in\PD(w)$ are pipe dreams such that $P_\dd\lessdot_{\C}P_{\dd\dd}$. Let \[{X=\{P\in\PD(w):P\wedge P_{\dd\dd}=P_{\dd}\}}.\] Suppose $P_0,P_1,P_2\in X$ are such that $P_0\lessdot_{\C}P_1$ and $P_0\lessdot_{\C}P_2$ (so $P_0=P_1\wedge P_2$). Then there are distinct inversions $(i_1,j_1)$ and $(i_2,j_2)$ such that $P_1=\C_{i_1,j_1}(P_0)$ and $P_2=\C_{i_2,j_2}(P_0)$. Let $R_1=\RR(P_0,P_1)$ and $R_2=\RR(P_0,P_2)$. One may readily check that at least one of the following two statements must hold: 
\begin{itemize}
\item We have $\SW(R_1)\neq\SE(R_2)$, $\SW(R_2)\neq\SE(R_1)$, and $i_1\neq i_2$. 
\item We have $\SW(R_1)\neq\NW(R_2)$, $\SW(R_2)\neq\NW(R_1)$, and $j_1\neq j_2$. 
\end{itemize} 
In either case, we may apply \cref{lem:semi1} or \cref{lem:semi2} to find that $P_1\vee P_2\in X$. 

Observe that $X$ is an order-convex subset of $\PD(w)$ whose minimum element is $P_\dd$. According to \cref{prop:maximum}, the set $X$ has a maximum element. Since the cover relation $P_\dd\lessdot_{\C}P_{\dd\dd}$ was arbitrary, it follows from \cref{prop:Free} that $\PD(w)$ is meet-semidistributive. 
\end{proof}

\section*{Acknowledgments}
Colin Defant was supported by the National Science Foundation under Award No.\ 2201907 and by a Benjamin Peirce Fellowship at Harvard University. Katherine Tung was supported by the Harvard College Research Program. This paper was submitted to arXiv.org on Yellow Pig Day, July 17th, in honor of David C.\ Kelly.

\end{document}